\definecolor{modra}{HTML}{0511F2}
\numberwithin{equation}{section}
\newtheorem{theorem}{Theorem}[section]
\newtheorem{coro}[theorem]{Corollary}
\newtheorem{prop}[theorem]{Proposition}
\newtheorem{lemma}[theorem]{Lemma}
\newcommand{\cha}{\mathcal{X}}
\theoremstyle{definition}
\newtheorem{const}[theorem]{Construction}
\renewcommand{\leq}{\leqslant}
\renewcommand{\geq}{\geqslant}
\newcommand{\xir}[1]{\xi^{(#1)}}
\newcommand{\Mod}[1]{\ (\mathrm{mod}\ #1)}
\newcommand{\rk}{\mathrm{rk}}
\newcommand{\emf}[1]{\textcolor{modra}{\emph{#1}}}
\renewcommand{\o}{\mathrm{o}}
\newcommand{\rs}{\mathrm{rs}}
\newcommand{\so}{\mathrm{so}}
\renewcommand{\sp}{\mathrm{sp}}
\newcommand{\abs}[1]{\lvert#1\rvert}
\newcommand{\la}{\lambda}
\newcommand{\La}{\Lambda}
\DeclareMathOperator{\sgn}{sgn}
\DeclareMathOperator{\ch}{ch}
\newcommand{\mur}[1]{\mu^{(#1)}}
\newcommand{\nur}[1]{\nu^{(#1)}}
\newcommand{\lar}[1]{\lambda^{(#1)}}
\newcommand{\tcore}{t\textup{-core}}
\newcommand{\xcore}[1]{#1\text{-core}}
\DeclareMathOperator{\Ind}{Ind}
\begin{document}

\title{Character factorisations, $z$-asymmetric partitions and plethysm}
\date{}
\author{Seamus Albion}
\address{Fakult\"{a}t f\"{u}r Mathematik, Universit\"{a}t Wien, 
Oskar-Morgenstern-Platz 1, A-1090 Vienna, Austria}
\email{seamus.albion@univie.ac.at}
\thanks{This research was funded in part by the Austrian Science Fund
(FWF) 10.55776/F1002, in the framework of the Special Research Programme
``Discrete Random Structures: Enumeration and Scaling Limits''}

\subjclass[2020]{05A17, 15A15, 20C15, 20C30, 05E05, 05E10}

\begin{abstract}
The Verschiebung operators $\varphi_t $ are a family of endomorphisms on the 
ring of symmetric functions, one for each integer $t\geq2$.
Their action on the Schur basis has its origins in work of Littlewood
and Richardson, and is intimately related with the 
decomposition of a partition into its $t$-core and $t$-quotient. 
Namely, they showed that the action on $s_\la$ is zero
if the $t$-core of the indexing partition is nonempty, and otherwise it factors
as a product of Schur functions indexed by the $t$-quotient. 
Much more recently, Lecouvey and, independently, Ayyer and Kumari have 
provided similar formulae for the characters of the symplectic and orthogonal 
groups, where again the combinatorics of cores and quotients plays a
fundamental role.
We embed all of these character factorisations in an infinite family involving
an integer $z$ and parameter $q$ using a very general symmetric function 
defined by Hamel and King.
The proof hinges on a new characterisation of the $t$-cores and $t$-quotients
of $z$-asymmetric partitions
which generalise the well-known classifications for self-conjugate and 
doubled distinct partitions.
We also explain the connection between these results, plethysms
of symmetric functions and characters of the symmetric group.
\end{abstract}

\maketitle

\section{Introduction}

For each integer $t\geq2$ the \emf{Verschiebung operator}\footnote{The name
\emph{Verschiebung} (German for \emph{shift}) comes from the theory of Witt
vectors; see \cite[\S2.9]{Grinberg22} and \cite[Exercise~2.9.10]{GR14}.} 
$\varphi_t$ is an endomorphism on the ring of symmetric functions defined by
\begin{equation}\label{Eq_premik}
\varphi_t h_k = \begin{cases} h_{k/t} & \text{if $t$ divides $k$},\\
0 & \text{otherwise},
\end{cases}
\end{equation}
where $h_k$ denotes the $k$-th complete homogeneous symmetric function.
The action of $\varphi_t$ on the Schur basis was first computed by Littlewood
and Richardson, but phrased in a different way \cite{LR34b,LR35}.
They classified the partitions for which $\varphi_t s_\la=0$ and further show
that when it is nonzero the result is a product of $t$ Schur functions indexed
by partitions depending only on $\la$.
Almost two decades later, Littlewood realised that this action is intimately 
related with the decomposition of a partition into its $t$-core and 
$t$-quotient, concepts which were not yet known at the time of the work
with Richardson.
Much more recently, Lecouvey \cite{Lecouvey09A} and, independently, 
Ayyer and Kumari \cite{AK22} 
computed the action of $\varphi_t$ on the characters of
the symplectic and orthogonal groups in a finite number of variables.
In \cite{Albion23} we lifted these results to the universal characters of
the associated groups.
Again, the combinatorics of cores and quotients is at the heart of the 
evaluations.
Our main result of the present paper, Theorem~\ref{Thm_chiz}, embeds all of 
these ``character factorisations'' in an infinite family paramaterised by an 
integer $z$ and involving a parameter $q$. This is achieved by
computing the action of $\varphi_t$ on a very general symmetric function of 
Hamel and King \cite{HK11a,HK11b}.
For $q=0$ we recover the Schur case and for $z\in\{-1,0,1\}$ the symplectic
and orthogonal cases.
What facilitates this generalisation is a characterisation of the $t$-cores
and $t$-quotients of the $z$-asymmetric partitions of Ayyer and Kumari which
are a $z$-deformation of self-conjugate partitions; see
Theorem~\ref{Thm_zAsym}.
Before explaining our contributions in detail, we survey the history of these
results, since it appears that they are not so well-known.
Moreover, it involves a rich interplay between (modular) representation 
theory, symmetric functions and the combinatorics of integer partitions.

\subsection{Historical background}
The notion of a hook of an integer partition was
introduced by Nakayama in the pair of papers \cite{Nakayama40I,Nakayama40II}.
For an integer $t\geq 2$ he showed that one can associate to each 
partition a \emf{$t$-core}, being a partition containing no hook of length
$t$.
His motivation came from the modular representation theory of the symmetric 
group, and in particular he conjectured that for $t$ prime two partitions
belong to the same $t$-block of the symmetric group if and only if 
they have the same $t$-core \cite[\S6]{Nakayama40II}.
This conjecture was proved several years later by Brauer 
and Robinson \cite{Brauer47,Robinson47}.\footnote{The proof is joint work but
appears in separate papers published simultaneously in the Transactions of
the Royal Society of Canada.}
Following the proof of Nakayama's conjecture, Robinson introduced the notion
of a star diagram associated to a partition, which encodes its 
$t$-hook structure \cite{Robinson48}, work which was continued by 
Staal \cite{Staal50}.
This was independently discovered by Nakayama and Osima, who gave a second,
independent proof of Nakayama's conjecture \cite{NO51}.

Inspired by Robinson's work, Littlewood synthesised the aforementioned ideas
into what he dubbed the $t$-residue and $t$-quotient of a 
partition \cite{Littlewood51}. 
In fact, the $t$-residue is just Nakayama's $t$-core, while the 
$t$-quotient contains the same information as the star diagram of Robinson,
but is more simply constructed. 
Perhaps due to its more straightforward nature, Littlewood's construction is
now the most well-known.
To be a little more explicit, let $\mathscr{P}$ denote the set of partitions
and $\mathscr{C}_t$ the set of all $t$-cores.
What is now known as the \emf{Littlewood decomposition} amounts to a bijection
\begin{align*}
\phi_t : \mathscr{P} &\longrightarrow \mathscr{C}_t\times\mathscr{P}^t \\
\la &\longmapsto \big(\tcore(\la),(\lar{0},\dots,\lar{t-1})\big),
\end{align*}
where $\tcore(\la)$ is Nakayama's $t$-core and the $t$-tuple of partitions
$(\lar{0},\dots,\lar{t-1})$ is Littlewood's $t$-quotient.
The bijection may be realised in several equivalent ways. Below we will use
the realisation in terms of Maya diagrams or, equivalently, the binary
encoding of partitions. Littlewood's original construction was purely
arithmetic, and his motivation was similar to the authors
before him. In his paper he gives a short, independent  proof of Nakayama's 
conjecture, and then uses the $t$-quotient as a tool to produce relationships 
between modular characters inside $t$-blocks.
He gives two further applications of the construction: one to character
values of the symmetric group and one to a particular plethysm of symmetric
functions.

Let $\chi^\la$ denote the irreducible character of the symmetric group
$\mathfrak{S}_n$ indexed by the partition $\la$ of $n$.
We use the usual notations for partitions; see Subsection~\ref{Sec_prelims}
for the relevant definitions.
Here we only note that $t\mu$ stands for the partition with all parts 
multiplied by $t$ and for a partition with empty $t$-core $\sgn_t(\la)$ is 
equal to $\pm1$ and may be defined in terms of the heights of ribbons; 
see \eqref{Eq_sgn}.
Littlewood stated the following theorem.
\begin{theorem}[{\cite[p.~340]{Littlewood51}}]\label{Thm_LittlewoodMult}
Let $\la$ be a partition of $nt$. Then
$\chi^\la(t\mu)=0$ unless the $t$-core of $\la$ is empty, in which case
\begin{equation}\label{Eq_LittlewoodMult}
\chi^\la(t\mu)
=\sgn_t(\la)\Ind_{\mathfrak{S}_{\abs{\lar{0}}}\times\cdots\times
\mathfrak{S}_{\abs{\lar{t-1}}}}^{\mathfrak{S}_n}
\big(\chi^{\lar{0}}\otimes\cdots\otimes\chi^{\lar{t-1}}\big)(\mu).
\end{equation}
\end{theorem}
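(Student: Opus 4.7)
The plan is to deduce the identity from the Littlewood--Richardson formula for $\varphi_t s_\la$ recalled in the introduction, passing through the Frobenius characteristic map $\ch$. Since $\ch$ is an isometry for the Hall inner product $\langle\cdot,\cdot\rangle$ on $\La$ that sends $\chi^\la\mapsto s_\la$, the character value we want reads
\[
\chi^\la(t\mu) = \langle s_\la, p_{t\mu} \rangle,
\]
where $p_\nu$ denotes the power sum symmetric function indexed by the cycle type $\nu$.

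The key observation is that $\varphi_t$ is the $\langle\cdot,\cdot\rangle$-adjoint of the plethystic substitution $f\mapsto f[p_t]$, which acts on the $p$-basis by $p_k\mapsto p_{tk}$. Since $p_{t\mu}=p_\mu[p_t]$, this transfers $\varphi_t$ onto the Schur factor:
\[
\chi^\la(t\mu) = \langle \varphi_t s_\la, p_\mu \rangle.
\]
The adjointness is a routine check on the $p$-basis: applying the ring homomorphism $\varphi_t$ to the generating series $H(z)=\sum_{k\geq 0}h_k z^k$ yields $\varphi_t H(z)=H(z^t)$, from which one reads off $\varphi_t p_n = t\, p_{n/t}$ when $t\mid n$ and zero otherwise, and combining this with $z_{t\mu}=t^{\ell(\mu)}z_\mu$ matches the two pairings.

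With the character value in this form the claim is immediate. If $\tcore(\la)\neq\varnothing$, then $\varphi_t s_\la=0$ by the Littlewood--Richardson formula, giving the vanishing. Otherwise, using
\[
\varphi_t s_\la = \sgn_t(\la)\, s_{\lar{0}} s_{\lar{1}} \cdots s_{\lar{t-1}},
\]
we obtain $\chi^\la(t\mu) = \sgn_t(\la)\langle s_{\lar{0}}\cdots s_{\lar{t-1}}, p_\mu\rangle$, and the remaining inner product is by definition the value at $\mu$ of the character induced from the Young subgroup $\mathfrak{S}_{\abs{\lar{0}}}\times\cdots\times\mathfrak{S}_{\abs{\lar{t-1}}}$, because $\ch$ carries products of Schur functions to induced products of characters. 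The only genuine input is the Verschiebung formula for Schur functions; the main delicate point is the bookkeeping of the sign $\sgn_t(\la)$, which is fixed by the ribbon-height convention in the Murnaghan--Nakayama rule and agrees with the sign produced by the Littlewood decomposition on Maya diagrams.
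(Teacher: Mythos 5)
Your argument is correct and is exactly the route the paper indicates in Subsection~\ref{Sec_chars}: transport the Verschiebung formula $\varphi_t s_\la = \sgn_t(\la)\,s_{\lar{0}}\cdots s_{\lar{t-1}}$ (Theorem~\ref{Thm_Littlewoodvarphi}, which the paper proves independently via Theorem~\ref{Thm_skewSchur}) across the characteristic map using $\chi^\la(\nu)=\langle s_\la,p_\nu\rangle$, $p_{t\mu}=p_\mu\circ p_t$, and the adjunction $\langle f\circ p_t,g\rangle=\langle f,\varphi_t g\rangle$, noting that $\ch$ carries the induction product to the ordinary product of Schur functions. The adjointness check on the power-sum basis via $z_{t\mu}=t^{\ell(\mu)}z_\mu$ is the right and complete verification, so there is no gap.
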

In fact, this result appears already in a paper of Littlewood and Richardson
from seventeen years prior \cite[Theorem~IX]{LR34b}.\footnote{Curiously,
Littlewood's citation of this result points to his treatise 
\cite{Littlewood40}, although it appears earlier in the work with Richardson.}
There, however, the elegance of the theorem is almost completely obscured by 
the absence of the concepts of the $t$-core and $t$-quotient.
An extension to skew characters $\chi^{\la/\mu}$ was given by Farahat, a 
student of Littlewood \cite{Farahat54}.
For more on this theorem and its generalisations see Subsection~\ref{Sec_chars}.

The second application is to a particular instance of plethysm of symmetric
functions. Again, deferring precise definitions until later on, let
$s_\la=s_\la(x_1,x_2,\dots)$ be the Schur function indexed by $\la$ and 
$p_r(x_1,x_2,\dots)=x_1^r+x_2^r+\cdots$ the $r$-th power sum symmetric
function.
The plethysm $p_r\circ s_\la=s_\la\circ p_r$ is defined by
\begin{equation}\label{Eq_pPlethDef}
s_\la\circ p_r:=s_\la(x_1^r,x_2^r,x_3^r,\ldots).
\end{equation}
Also, for a multiset of skew shapes $\mathcal{S}$ we let 
$c_{\mathcal{S}}^\la$ denote the coefficient of $s_\la$ in the Schur expansion
of $\prod_{\mu\in\mathcal{S}}s_\mu$. When $\mathcal{S}$ consists of only two
straight shapes $\mu,\nu$ then $c_{\mu,\nu}^\la$ are the Littlewood--Richardson
coefficients famously characterised by Littlewood and Richardson in
\cite{LR34a}.
Thus we will refer to the $c_{\mathcal{S}}^\la$ as 
\emf{multi-Littlewood--Richardson coefficients}.
Littlewood's second application is the Schur expansion of the plethysm
\eqref{Eq_pPlethDef}.

\begin{theorem}[{\cite[p.~351]{Littlewood51}}]\label{Thm_SXPLittlewood}
For a partition $\la$ and integer $t\geq 2$,
\[
s_\la\circ p_t
=\sum_{\substack{\nu\\\tcore(\nu)=\varnothing}}
\sgn_t(\nu) c_{\nur{0},\dots,\nur{t-1}}^\la s_\nu.
\]
\end{theorem}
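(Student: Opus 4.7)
The plan is to deduce Theorem~\ref{Thm_SXPLittlewood} directly from Theorem~\ref{Thm_LittlewoodMult} via the Frobenius characteristic map. The key observation is that $p_\mu \circ p_t = p_{t\mu}$, so starting from the power-sum expansion $s_\la = \sum_\mu z_\mu^{-1} \chi^\la(\mu) p_\mu$ one immediately obtains
\[
s_\la \circ p_t = \sum_\mu \frac{\chi^\la(\mu)}{z_\mu}\, p_{t\mu}.
\]
Re-expanding each $p_{t\mu} = \sum_\nu \chi^\nu(t\mu)\, s_\nu$ in the Schur basis and interchanging summations gives
\[
s_\la \circ p_t = \sum_\nu s_\nu \sum_\mu \frac{\chi^\la(\mu)\,\chi^\nu(t\mu)}{z_\mu}.
\]

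The inner sum is now ripe for Theorem~\ref{Thm_LittlewoodMult}: the factor $\chi^\nu(t\mu)$ vanishes unless $\tcore(\nu) = \varnothing$, which restricts the outer sum to $t$-core-free $\nu$, and when nonzero it equals $\sgn_t(\nu)$ times the character of the induced representation
$\Ind_{\mathfrak{S}_{\abs{\nur{0}}}\times\cdots\times\mathfrak{S}_{\abs{\nur{t-1}}}}^{\mathfrak{S}_n}\big(\chi^{\nur{0}}\otimes\cdots\otimes\chi^{\nur{t-1}}\big)$
evaluated at $\mu$. After pulling $\sgn_t(\nu)$ outside, the inner sum becomes the standard symmetric-group inner product of $\chi^\la$ with this induced character, which by Frobenius reciprocity equals $\langle\mathrm{Res}(\chi^\la),\chi^{\nur{0}}\otimes\cdots\otimes\chi^{\nur{t-1}}\rangle$. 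Translating back through $\ch$ — under which induction from a Young subgroup corresponds to multiplication of Schur functions — identifies this inner product with $\langle s_\la, s_{\nur{0}}\cdots s_{\nur{t-1}}\rangle = c^\la_{\nur{0},\dots,\nur{t-1}}$, which is the claimed coefficient.

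There is no genuine obstacle, only bookkeeping: one must verify that when $\tcore(\nu)=\varnothing$ the sizes satisfy $\abs{\nur{0}} + \cdots + \abs{\nur{t-1}} = \abs{\nu}/t = \abs{\mu}$ so that the induction is well defined on the appropriate Young subgroup, and keep track of the sign $\sgn_t(\nu)$ through the rearrangement. An alternative and slightly more streamlined route avoids character theory altogether by invoking the Hall-inner-product adjointness $\langle f\circ p_t, g\rangle = \langle f, \varphi_t g\rangle$ together with the Littlewood--Richardson evaluation of $\varphi_t s_\nu$ recalled in the introduction; this immediately yields $[s_\nu](s_\la\circ p_t) = \langle s_\la, \varphi_t s_\nu\rangle$, which matches the right-hand side of the theorem term-by-term. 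Either argument makes clear that Theorem~\ref{Thm_SXPLittlewood} is, in a precise sense, equivalent to Theorem~\ref{Thm_LittlewoodMult} under the Frobenius--isometry dictionary.
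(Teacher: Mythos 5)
Your proof is correct, and in fact you have given two valid arguments. Your main argument -- expand $s_\la$ in power sums, use $p_\mu\circ p_t = p_{t\mu}$, re-expand $p_{t\mu}$ in the Schur basis to bring in $\chi^\nu(t\mu)$, apply Theorem~\ref{Thm_LittlewoodMult}, and finish with Frobenius reciprocity under $\ch$ -- is essentially Littlewood's original derivation, which the introduction alludes to (``the proof of Theorem~\ref{Thm_SXPLittlewood} in \cite{Littlewood51} uses Theorem~\ref{Thm_LittlewoodMult}'') but does not carry out. The paper itself does not prove Theorem~\ref{Thm_SXPLittlewood} directly; instead it proves the more general Theorem~\ref{Thm_SXP} (Wildon's skew extension) in Section~\ref{Sec_SXP} by combining the adjointness $\langle f\circ p_t,g\rangle=\langle f,\varphi_t g\rangle$ with Theorem~\ref{Thm_skewSchur}, and Theorem~\ref{Thm_SXPLittlewood} is recovered as the $\mu=\tau=\varnothing$ specialisation. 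That paper route is exactly the ``alternative and slightly more streamlined route'' you sketch at the end. What your main route buys is an explicit bridge from the character-theoretic statement Theorem~\ref{Thm_LittlewoodMult} to the symmetric-function statement without invoking the Verschiebung operator; what the adjointness route buys is brevity and the skew/Wildon generalisation essentially for free. One small caution on your alternative route: in the introduction the paper derives Theorem~\ref{Thm_Littlewoodvarphi} \emph{from} Theorem~\ref{Thm_SXPLittlewood} using the same adjoint relation, so to avoid circularity you should appeal to the independent proof of $\varphi_t s_{\la/\mu}$ given via the Jacobi--Trudi determinant (Theorem~\ref{Thm_skewSchur}), as the paper itself does in Section~\ref{Sec_SXP}, rather than to the statement as reached in the introduction.
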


This formula has come to be known as the \emf{SXP rule}.
It has a generalisation as an expansion of the
expression $s_\tau(s_{\la/\mu}\circ p_t)$ due to Wildon \cite{Wildon18} which 
we will meet later on in Section~\ref{Sec_SXP}.

A glance at the structure of the theorems suggests there must be a relation
between them, and indeed the proof of Theorem~\ref{Thm_SXPLittlewood} in 
\cite{Littlewood51} uses Theorem~\ref{Thm_LittlewoodMult}.
Remarkably, Littlewood and Richardson's proof of the first theorem is based 
on a Schur function identity which is in a sense dual to the second theorem.
(Littlewood provides a proof of a slightly more general result in 
\cite{Littlewood51}, of which Theorem~\ref{Thm_LittlewoodMult} is a special 
case, which is independent of the proof given earlier.)
To explain this, recall that the \emf{Hall inner product} is the inner product
on the ring of symmetric functions $\La$ for which the Schur functions are
orthonormal:
\begin{equation}\label{Eq_Hall-Def}
\langle s_\la,s_\mu\rangle=\delta_{\la\mu},
\end{equation}
where $\delta_{\la\mu}$ is the usual Kronecker delta.
As an operator on the algebra of symmetric functions, the plethysm
by $p_t$ has an adjoint, which is denoted by $\varphi_t$.
That is, for any $f,g\in\La$,
\begin{equation}\label{Eq_adjoint}
\langle f\circ p_t,g\rangle = \langle f,\varphi_t g\rangle.
\end{equation}
The operator $\varphi_t$ turns out to be the Verschiebung operator defined
above \eqref{Eq_premik}.

With the aid of Theorem~\ref{Thm_SXPLittlewood} the evaluation of the action
of $\varphi_t$ on the Schur functions is a short exercise.
Setting $(f,g)\mapsto(s_\mu,s_\la)$ in \eqref{Eq_adjoint} gives
$\langle s_\mu\circ p_t,s_\la\rangle =\langle s_\mu,\varphi_t s_\la\rangle$.
Therefore 
\[
\langle s_\mu,\varphi_t s_\la\rangle
=\begin{cases}\sgn_t(\la)c_{\lar{0},\dots,\lar{t-1}}^\mu &\text{if $\tcore(\la)=\varnothing$}, \\
0 & \text{otherwise}.
\end{cases}
\]
By the definition of the multi-Littlewood--Richardson coefficients we obtain
the following.

\begin{theorem}\label{Thm_Littlewoodvarphi}
For $\la$ a partition and $t\geq 2$ an integer we have that 
$\varphi_t s_\la=0$ unless $\tcore(\la)=\varnothing$, in which case
\[
\varphi_t s_\la = \sgn_t(\la)s_{\lar{0}}\cdots s_{\lar{t-1}}.
\]
\end{theorem}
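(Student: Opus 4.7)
The plan is to invoke the adjointness relation \eqref{Eq_adjoint} together with the SXP rule (Theorem~\ref{Thm_SXPLittlewood}) to determine every Hall inner product $\langle s_\mu,\varphi_t s_\la\rangle$, and then to reconstruct $\varphi_t s_\la$ using the fact that the Schur functions form an orthonormal basis of $\La$.

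Concretely, for an arbitrary partition $\mu$ I would substitute $(f,g)=(s_\mu,s_\la)$ in \eqref{Eq_adjoint} to obtain
\[
\langle s_\mu,\varphi_t s_\la\rangle = \langle s_\mu\circ p_t,\,s_\la\rangle.
\]
Applying Theorem~\ref{Thm_SXPLittlewood} to the plethysm on the right yields
\[
\langle s_\mu\circ p_t,\,s_\la\rangle
=\Big\langle\sum_{\substack{\nu\\\tcore(\nu)=\varnothing}}
\sgn_t(\nu)\,c_{\nur{0},\dots,\nur{t-1}}^\mu\, s_\nu,\ s_\la\Big\rangle,
\]
and the orthonormality relation \eqref{Eq_Hall-Def} collapses this sum to a single term: it vanishes unless $\tcore(\la)=\varnothing$, in which case it equals $\sgn_t(\la)\,c_{\lar{0},\dots,\lar{t-1}}^\mu$.

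Finally, since $\{s_\mu\}$ is a basis of $\La$, the numbers $\langle s_\mu,\varphi_t s_\la\rangle$ determine $\varphi_t s_\la$ uniquely. Comparing with the Schur expansion
\[
s_{\lar{0}}\cdots s_{\lar{t-1}}=\sum_\mu c_{\lar{0},\dots,\lar{t-1}}^\mu\, s_\mu,
\]
which is the defining property of the multi-Littlewood--Richardson coefficients, gives the stated factorisation. There is no genuine obstacle here: the entire combinatorial content of the theorem, namely the role of the $t$-core and $t$-quotient, has already been absorbed into Theorem~\ref{Thm_SXPLittlewood}, and the derivation above is essentially the formal dualisation under the Hall pairing. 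The only point requiring a bit of care is making sure that \eqref{Eq_adjoint} is invoked with the correct adjoint convention, but this is immediate from the definition of $\varphi_t$ as the adjoint of plethysm by $p_t$.
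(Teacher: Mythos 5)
Your proposal is correct and is essentially identical to the argument the paper gives immediately before stating the theorem: substitute $(f,g)\mapsto(s_\mu,s_\la)$ into the adjoint relation \eqref{Eq_adjoint}, apply the SXP rule (Theorem~\ref{Thm_SXPLittlewood}) together with orthonormality of the Schur basis to evaluate $\langle s_\mu,\varphi_t s_\la\rangle$, and conclude by recognising the resulting coefficients as those in the Schur expansion of $s_{\lar{0}}\cdots s_{\lar{t-1}}$.
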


As already mentioned above, this result has its origins in the work of 
Littlewood and Richardson in the 1930's. 
At the generality of the above theorem the result first appears 
in Littlewood's book \cite[\S7.3]{Littlewood40}.
There the language of cores and quotients is of course not used, nor is the
Verschiebung operator. Rather he gives an equivalent formulation in terms of 
his notion of the ``S-function of a series''; see
\cite{Littlewood35} or \cite[Exercise~7.91]{Stanley99}.
There is also an extension of Theorem~\ref{Thm_Littlewoodvarphi} to skew Schur
functions due to Farahat and Macdonald; see Theorem~\ref{Thm_skewSchur} below.
How precisely  Theorems~\ref{Thm_LittlewoodMult}, \ref{Thm_SXPLittlewood} and \ref{Thm_Littlewoodvarphi} are equivalent will be explained in 
Subsection~\ref{Sec_chars}.

\subsection{Generalisations to classical group characters}
In their paper on what are now called LLT polynomials, 
Lascoux, Leclerc and Thibon pointed out that the adjoint 
relationship \eqref{Eq_adjoint} combined with a refinement of the Littlewood
decomposition to ribbon tableaux due to Stanton and White \cite{SW85} leads to 
a combinatorial proof of the identity of Theorem~\ref{Thm_Littlewoodvarphi} 
\cite[\S IV]{LLT97}.
Indeed, the operator $\varphi_t$ and its plethysm adjoint are ``$q$-deformed''
and then used to define the LLT polynomials.
In extending this construction to other types, Lecouvey proved 
beautiful variations of Theorem~\ref{Thm_Littlewoodvarphi} for the
characters of $\mathrm{Sp}_{2n}$ and
$\mathrm{O}_{2n}$ in the case $t$ is odd
and $\mathrm{SO}_{2n+1}$ for general $t$ \cite[\S3]{Lecouvey09B}.
(Here and throughout all matrix groups are taken over $\mathbb{C}$.)
Rather than expressing these results as products of characters, he gives the 
expansion of the evaluation in terms of Weyl characters
where the coefficients are branching coefficients corresponding to the
restriction of an irreducible polynomial representation to a subgroup of
Levi type.
The obstruction for $t$ even in the first two cases is precisely that the
coefficients cannot be interpreted as branching coefficients.

Recently Ayyer and Kumari rediscovered the factorisation results of Lecouvey,
but in a slightly different form by ``twisting'' a finite set of $n$ variables
by a primitive $t$-th root of unity \cite{AK22}.
This point of view is explained in Section~\ref{Sec_disc}.
By working with the explicit Laurent polynomial expressions 
for the symplectic and orthogonal characters they could show that for all 
$t\geq 2$ these twisted characters factor as a product of other characters.
They also characterise the vanishing of these twisted characters in a much 
simpler manner. For example they show that the twisted character of 
$\mathrm{SO}_{2n+1}$ indexed by $\la$ vanishes unless 
$\tcore(\la)$ is self-conjugate.
The even orthogonal and symplectic cases admit similarly simple descriptions.
For $t=2$ these factorisations may be found already in the work of 
Mizukawa \cite{Mizukawa03}.

Lecouvey also proved striking extensions of Theorem~\ref{Thm_SXPLittlewood}
to the universal characters of the symplectic and orthogonal groups
\cite{Lecouvey09A}.
These are symmetric function lifts of the ordinary characters first defined by 
Koike and Terada \cite{KT87} using the Jacobi--Trudi formulae of Weyl.
(Lecouvey's extensions are anticipated by work of Littlewood 
\cite{Littlewood58} for the ordinary characters and Scharf and Thibon for the 
universal characters \cite[\S6]{ST94}, both only for $t=2,3$.)
Inspired by the work of Ayyer and Kumari we lifted their 
factorisations to the level of universal characters 
\cite{Albion23}.\footnote{At the time we were unfortunately not aware of the 
work of Lecouvey.}
Our proofs there are based on the Jacobi--Trudi formulae for these
symmetric functions.
In the present work we utilise a different approach based on 
expressions for the universal characters in terms of skew Schur functions.
For example, let $\so_\la$ denote the universal odd orthogonal character.
Then
\begin{equation}\label{Eq_so-def}
\so_\la 
:=\det_{1\leq i,j\leq l(\la)}(h_{\la_i-i+j}+h_{\la_i-i-j-1})
= \sum_{\substack{\mu\in\mathscr{P}_0\\\mu\subseteq\la}}
(-1)^{(\abs{\mu}-\rk(\mu))/2}s_{\la/\mu},
\end{equation}
where $\mathscr{P}_0$ is the set of self-conjugate partitions,
$\mu\subseteq\la$ means the Young diagram of $\mu$ is contained in that of 
$\la$ and $\rk(\mu)$ denotes the Frobenius rank of $\mu$.
We will now state the expression for $\varphi_t\so_\la$,
in which we will write $\tilde\la:=\tcore(\la)$, a short-hand also used below
whenever it is convenient.
We also note that for a pair of partitions $\la,\mu$ the symmetric function
$\rs_{\la,\mu}$ is the universal character lift of the irreducible 
rational representation of $\mathrm{GL}_n$ indexed by the pair of 
partitions $(\la,\mu)$;
see \eqref{Eq_rsDef} and the surrounding discussion for a definition.

\begin{theorem}\label{Thm_so}
For $\la$ a partition and $t\geq 2$ an integer we have that 
$\varphi_t \so_\la=0$ unless $\tcore(\la)$ is self-conjugate, in which case
\[
\varphi_t\so_\la
=(-1)^{(\abs{\tilde\la}-\rk(\tilde\la))/2}\sgn_t(\la/\tilde\la)
\prod_{r=0}^{\lfloor (t-2)/2\rfloor}
\rs_{\lar{r},\lar{t-r-1}}\times\begin{cases}
1 & \text{$t$ even}, \\
\so_{\lar{(t-1)/2}}  & \text{$t$ odd}.
\end{cases}
\]
\end{theorem}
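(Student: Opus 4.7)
The plan is to apply $\varphi_t$ term-by-term to the skew Schur expansion \eqref{Eq_so-def} and then reassemble using the skew version of Theorem~\ref{Thm_Littlewoodvarphi} due to Farahat and Macdonald (Theorem~\ref{Thm_skewSchur}). That result asserts that $\varphi_t s_{\la/\mu}=0$ unless $\tcore(\mu)=\tcore(\la)$, in which case
\[
\varphi_t s_{\la/\mu}=\sgn_t(\la/\mu)\prod_{r=0}^{t-1}s_{\lar{r}/\mur{r}}.
\]
Substituting this into \eqref{Eq_so-def} and writing $\tilde\la=\tcore(\la)$ gives
\[
\varphi_t\so_\la=\sum_\mu(-1)^{(\abs{\mu}-\rk(\mu))/2}\sgn_t(\la/\mu)\prod_{r=0}^{t-1}s_{\lar{r}/\mur{r}},
\]
with the sum ranging over self-conjugate $\mu\subseteq\la$ satisfying $\tcore(\mu)=\tilde\la$. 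Since the $t$-core of a self-conjugate partition is itself self-conjugate, this sum is empty unless $\tilde\la$ is self-conjugate, which accounts for the vanishing clause.

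Assuming henceforth that $\tilde\la$ is self-conjugate, I would use Theorem~\ref{Thm_zAsym} at $z=0$ to parametrise the admissible $\mu$ by the $t$-tuples $(\mur{0},\dots,\mur{t-1})$ satisfying the conjugation constraint $\mur{r}=(\mur{t-r-1})'$, so that when $t$ is odd the middle component $\mur{(t-1)/2}$ is itself self-conjugate. The same theorem should provide the decomposition of $\abs{\mu}$ and $\rk(\mu)$ under the Littlewood bijection, and the cocycle identity $\sgn_t(\la/\mu)=\sgn_t(\la/\tilde\la)\,\sgn_t(\mu/\tilde\la)$ then allows $\sgn_t(\la/\tilde\la)$ to be pulled outside the sum. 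Exploiting the conjugation constraint, the contributions of paired indices $r\ne t-r-1$ to the parity of $\abs{\mu}-\rk(\mu)$ and to $\sgn_t(\mu/\tilde\la)$ should collapse to $+1$, leaving only the factor $(-1)^{(\abs{\tilde\la}-\rk(\tilde\la))/2}$ together with, when $t$ is odd, an extra $(-1)^{(\abs{\mur{(t-1)/2}}-\rk(\mur{(t-1)/2}))/2}$ from the fixed middle index.

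The remaining sum then factors over the free parameters $\mur{r}$, $0\leq r\leq\lfloor(t-2)/2\rfloor$. For each $r<(t-1)/2$ the partial sum
\[
\sum_\nu s_{\lar{r}/\nu}\,s_{\lar{t-r-1}/\nu'}=\rs_{\lar{r},\lar{t-r-1}}
\]
is the standard universal-character expansion of the rational Schur function alluded to at \eqref{Eq_rsDef}, and when $t$ is odd the middle partial sum
\[
\sum_{\nu\in\mathscr{P}_0}(-1)^{(\abs{\nu}-\rk(\nu))/2}s_{\lar{(t-1)/2}/\nu}=\so_{\lar{(t-1)/2}}
\]
is precisely \eqref{Eq_so-def} read off at $\lar{(t-1)/2}$. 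Combining these yields the claimed factorisation.

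The main obstacle will be the sign bookkeeping in the middle step: one must extract from Theorem~\ref{Thm_zAsym} the precise Frobenius-rank decomposition under the Littlewood bijection and verify that $\sgn_t(\mu/\tilde\la)$ distributes multiplicatively over the conjugate pairing so that paired factors genuinely cancel. Once this combinatorial dictionary has been read off, the rest of the argument is a clean recollection of the summation variables into the $\rs$ and $\so$ expansions.
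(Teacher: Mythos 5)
Your strategy is precisely the paper's: apply $\varphi_t$ to the skew Schur expansion \eqref{Eq_so-def}, invoke Theorem~\ref{Thm_skewSchur}, parametrise the surviving self-conjugate $\mu$ by their quotients via Theorem~\ref{Thm_zAsym} at $z=0$, pull the cocycle $\sgn_t(\la/\tilde\la)$ out front, and reassemble the factored sum into copies of $\rs$ and, for $t$ odd, $\so$. This is exactly the $z=0$, $q=1$ specialisation of the paper's proof of Theorem~\ref{Thm_chiz}, which the paper obtains only as a corollary of the general result.

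You do, however, have two compensating sign errors in the pairing step that you yourself flag as the main obstacle. You assert that the contribution of a conjugate pair $r\neq t-r-1$ to $(-1)^{(\abs{\mu}-\rk(\mu))/2}\sgn_t(\mu/\tilde\la)$ (after extracting $(-1)^{(\abs{\tilde\la}-\rk(\tilde\la))/2}$) collapses to $+1$; in fact the bookkeeping in the paper's sign-factorisation step yields $(-1)^{\abs{\mur{r}}}$ for each such pair. You then write the pairwise partial sum as $\sum_\nu s_{\lar{r}/\nu}\,s_{\lar{t-r-1}/\nu'}$ and identify it with $\rs_{\lar{r},\lar{t-r-1}}$; but \eqref{Eq_rsDef} reads $\rs_{\la,\mu}=\sum_\nu(-1)^{\abs{\nu}}s_{\la/\nu}s_{\mu/\nu'}$, and the $(-1)^{\abs{\nu}}$ is essential, so the equality as written is false. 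The two missing $(-1)^{\abs{\nu}}$'s cancel, so your final formula is correct, but neither intermediate claim holds. A smaller point: the Frobenius-rank decomposition $\rk(\mu)=\rk(\tilde\la)+\sum_r\rk_{c_r}(\mur{r})$ that you need to split the exponent of $-1$ over the quotient is Lemma~\ref{Lem_rank} (Brunat--Nath), not Theorem~\ref{Thm_zAsym}, and the factorisation over indices is a property of the combined sign $(-1)^{(\abs{\mu}-\rk(\mu))/2}\sgn_t(\mu/\tilde\la)$, not of $\sgn_t(\mu/\tilde\la)$ taken alone.
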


This may be found in various forms in \cite[\S3.2.4]{Lecouvey09B},
\cite[Theorem~2.17]{AK22} and \cite[Theorem~3.4]{Albion23}.
The key difference between this theorem and all its previous iterations is
that the overall sign is explicitly expressed in terms of 
statistics on $\la$ and its $t$-core.
While not visible from the above we are also able to show that in the 
symplectic and even orthogonal cases the sign is just as simple.
The proof of the above theorem we present below uses the skew Schur expansion in
\eqref{Eq_so-def}, the skew Schur function case of
Theorem~\ref{Thm_Littlewoodvarphi} (Theorem~\ref{Thm_skewSchur} below)
and properties of the Littlewood decomposition restricted to the set of 
self-conjugate partitions.
More precisely, it was observed by
Osima \cite{Osima52} that a partition is self-conjugate if and only if
$\tcore(\la)$ is self-conjugate and
\begin{equation}\label{Eq_IntroReflect}
\lar{r}=(\lar{t-r-1})' \quad\text{for $0\leq r\leq t-1$}.
\end{equation}
Note that the partitions paired by this condition are precisely the partitions
paired in the factorisation of $\varphi_t\so_\la$.

In fact much more is true. 
Our main result, which we state as Theorem~\ref{Thm_chiz} 
below,
embeds Theorem~\ref{Thm_so} as the $(z,q)=(0,1)$ case of an infinite family 
of such factorisations where $z$ is an arbitrary integer and $q$ is a formal
variable. 
The generalisation of the character $\so_\la$, denoted
$\cha_\la(z;q)$, is a 
symmetric function defined by Hamel and King \cite{HK11a,HK11b}, building on
work of Bressoud and Wei \cite{BW92}.
It may be expressed as a Jacobi--Trudi-type determinant or as a sum  
of skew Schur functions \`a la \eqref{Eq_so-def}.
This sum is indexed by $z$-asymmetric partitions, a 
term coined by Ayyer and Kumari, which are a $z$-deformation of 
self-conjugate partitions.
In fact, what facilitates the factorisation of this object under 
$\varphi_t$ is that the Littlewood decomposition for $z$-asymmetric partitions
has a nice structure, involving ``conjugation conditions'' such as
\eqref{Eq_IntroReflect}.
Indeed, this is our other main result, Theorem~\ref{Thm_zAsym},
which characterises $z$-asymmetric partitions in terms of their Littlewood
decompositions.
For $z=0$ this is the self-conjugate case discussed prior, and for $z=1$ this
appears in the seminal work of Garvan, Kim and Stanton on 
cranks \cite{GKS90}.

\subsection{Summary of the paper}
The paper reads as follows. In the next section we introduce the necessary
definitions and conventions for integer partitions, including the 
Littlewood decomposition. This includes our first main result,
Theorem~\ref{Thm_zAsym}, the characterisation of $z$-asymmetric partitions
under the Littlewood decomposition.
Section~\ref{Sec_Symm} then turns to symmetric functions and universal 
characters. We survey the action of the Verschiebung operators on the classical
bases of the ring of symmetric functions, and introduce a new deformation of
the rational universal characters which arise naturally in our main 
factorisation theorem.
Section~\ref{Sec_proofs} then contains the companions of Theorem~\ref{Thm_so}
for the symplectic and even orthogonal characters, our generalisation,
stated as Theorem~\ref{Thm_chiz}, and its proof.
Then Section~\ref{Sec_SXP} is used to survey the known SXP rules for
Schur functions and universal characters.
This includes Wildon's generalisation of Theorem~\ref{Thm_SXPLittlewood} which 
we show is equivalent to the skew case of Theorem~\ref{Thm_Littlewoodvarphi}
(Theorem~\ref{Thm_skewSchur} below).
Using our combinatorial setup, we give reinterpretations of Lecouvey's 
SXP rules, and in particular show that for all types they may be
expressed as sums over partitions with empty $t$-core.
We close with some remarks about related results, including a discussion of 
the precise relationship between the first three theorems of the introduction.

\section{Partitions and the Littlewood decomposition}
This section contains the necessary preliminaries regarding integer partitions. 
We also describe the Littlewood decomposition in terms of Maya diagrams
which is essentially the abacus model of James and Kerber \cite{JK81}.
Our main results in this section,
Theorem~\ref{Thm_zAsym} and its corollaries, give a characterisation of 
$z$-asymmetric partitions in terms of the Littlewood decomposition.

\subsection{Preliminaries}\label{Sec_prelims}
A \emf{partition} is a weakly decreasing sequence of nonnegative integers 
$\la=(\la_1,\la_2,\la_3,\dots)$ such that the \emf{size}
$\abs{\la}:=\la_1+\la_2+\la_3+\cdots$ is finite.
The nonzero $\la_i$ are called \emf{parts} and the number of parts the
\emf{length}, denoted $l(\la)$.
The set of all partitions is written $\mathscr{P}$ and the empty partition,
the unique partition of $0$, is denoted by $\varnothing$.
We write $(m^\ell)$ for the partition with $\ell$ parts equal to $m$, and the 
sum $\la+(m^\ell)$ is then the partition obtained by adding 
$m$ to the first $\ell$ parts of $\la$.
We identify a partition with its \emf{Young diagram},
which is the left-justified array of cells consisting of $\la_i$ cells in 
row $i$ with $i$ increasing downward.
An example is given in Figure~\ref{Fig_YD}.
We define the \emf{conjugate} partition $\la'$ by reflecting the diagram of 
$\la$ in the main diagonal, so that the conjugate of $(6,5,5,1)$ is
$(4,3,3,3,3,1)$.
A partition is \emf{self-conjugate} if $\la=\la'$.

The \emf{Frobenius rank} of a partition, $\rk(\la)$, is defined as the number 
of cells along the main diagonal of its Young diagram.
We extend this by an integer $c\in\mathbb{Z}$ to a statistic $\rk_c(\la)$
which we call the \emf{$c$-shifted Frobenius rank}.
If $c\geq0$ this is the Frobenius rank of the partition obtained
by deleting the first $c$ rows of $\la$, while for $c\leq 0$ it is the Frobenius
rank of the partition with the first $-c$ columns of $\la$ removed.
Another way to notate partitions is with \emf{Frobenius notation}, which
records the number of cells to the right of and below each cell on this 
main diagonal. This is written 
\[
\la = \big(\la_1-1,\dots,\la_{\rk(\la)}-\rk(\la)~\vert~\la_1'-1,
\dots,\la_{\rk(\la)}'-\rk(\la)\big);
\]
again, see Figure~\ref{Fig_YD} for an example.
Any two strictly decreasing nonnegative integer sequences $u,v$ with the same
number of elements, say $k$, thus give a unique partition $\la=(u~\vert~v)$ 
of Frobenius rank $k$. Clearly self-conjugate partitions are those of the 
form $(u~\vert~u)$. 
Now let $u+z:=(u_1+z,\dots,u_k+z)$ for any 
$z\in\mathbb{Z}$. Ayyer and Kumari define 
\emf{$z$-asymmetric partitions} to be those of the form $(u+z~\vert~u)$ for 
any sequence $u$ (of any length) and fixed $z\in\mathbb{Z}$ 
\cite[Definition~2.9]{AK22}.
The set of $z$-asymmetric partitions is denoted by $\mathscr{P}_z$
and $(6,5,5,1)$ in Figure~\ref{Fig_YD} is $2$-asymmetric.
The generating function for $z$-asymmetric partitions is given by
\[
\sum_{\la\in\mathscr{P}_z}q^{\abs{\la}}=(-q^{1+\abs{z}};q^2)_\infty.
\]
This is easy to see by the fact that a $z$-asymmetric partition is uniquely
determined by its set of hook lengths on the main diagonal.
These are all distinct integers of the form ``odd plus $\abs{z}$'', which gives
the proof.
Clearly the conjugate of a $z$-asymmetric partition is $-z$-asymmetric.

\begin{figure}[htb]
\centering
\begin{tikzpicture}[scale=.5]
\foreach \i [count=\ii] in {6,5,5,1}
\foreach \j in {1,...,\i}{\draw (\j,1-\ii) rectangle (\j+1,-\ii);}
\foreach \i in {1,2,3}{
\draw[fill=gray] (\i,1-\i) rectangle (\i+1,-\i);}
\foreach \i [count=\ii] in {6,5,5,1}
\foreach \j in {1,...,\i}{\draw (\j+11,1-\ii) rectangle (\j+12,-\ii);}
\node at (12.5,-0.5) {$9$}; \node at (13.5,-0.5) {$7$}; \node at (14.5,-0.5) {$6$}; \node at (15.5,-0.5) {$5$}; \node at (16.5,-0.5) {$4$}; \node at (17.5,-0.5) {$1$};
\node at (12.5,-1.5) {$7$}; \node at (13.5,-1.5) {$5$}; \node at (14.5,-1.5) {$4$}; \node at (15.5,-1.5) {$3$}; \node at (16.5,-1.5) {$2$}; 
\node at (12.5,-2.5) {$6$}; \node at (13.5,-2.5) {$4$}; \node at (14.5,-2.5) {$3$}; \node at (15.5,-2.5) {$2$}; \node at (16.5,-2.5) {$1$};
\node at (12.5,-3.5) {$1$};
\end{tikzpicture}
\caption{The partition $\la=(6,5,5,1)=(5,3,2~\vert~3,1,0)$ with its main diagonal 
shaded (left) and the same partition with hook length of each cell 
inscribed (right). We have $\abs{\la}=17$, $l(\la) = 4$, $\rk(\la)=3$,
$\rk_2(\la)=1$ and $\rk_{-3}(\la)=2$.}
\label{Fig_YD}
\end{figure}
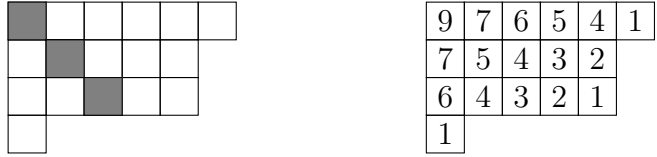

Given a cell $s$ in the Young diagram of $\la$ its \emf{hook length} is one 
more than the sum of the number of cells below and to the right of $s$;
see Figure~\ref{Fig_YD}. The \emf{hook} of $s$ is then the set of cells counted.
A hook is a \emf{principal hook} if it is the hook of a cell on the main 
diagonal.
For an integer $t\geq 2$ we say a partition is 
a \emf{$t$-core} if it contains no cell with hook length $t$ 
(or, equivalently, no cell with hook length divisible by $t$).
For a pair of partitions $\la$, $\mu$ we say $\mu$ is \emf{contained} in 
$\la$, written $\mu\subseteq\la$, if its Young diagram may be drawn inside the 
Young diagram of $\la$.
The corresponding \emf{skew shape} is the arrangement of cells formed by
removing $\mu$'s diagram from $\la$'s.
A skew shape is a \emf{ribbon} if it is edge-connected and contains no
$2\times 2$ square of cells, and a \emf{$t$-ribbon} is a ribbon containing
$t$ cells.\footnote{Elsewhere in the literature ribbons are variously called
\emf{border strips}, \emf{rim hooks} or \emf{skew hooks}.}
The \emf{height} of a ribbon $R$, $\mathrm{ht}(R)$, is one less than the 
number of rows it occupies; see Figure~\ref{Fig_ribbon}.

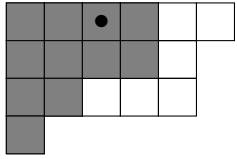
\begin{figure}[htb]
\centering
\begin{tikzpicture}[scale=.5]
\foreach \i [count=\ii] in {6,5,5,1}
\foreach \j in {1,...,\i}{\draw (\j,1-\ii) rectangle (\j+1,-\ii);}
\foreach \i [count=\ii] in {4,4,2,1}
\foreach \j in {1,...,\i}{\draw[fill=gray] 
(\j,1-\ii) rectangle (\j+1,-\ii);}
\node at (3.5,-0.5) {$\bullet$};
\end{tikzpicture}
\caption{The pair of partitions $(4,4,2,1)\subseteq(6,5,5,1)$.
The unshaded cells form a $6$-ribbon of height $2$ and the corresponding
cell with hook length $6$ is marked.}
\label{Fig_ribbon}
\end{figure}

We say a skew shape $\la/\mu$ is \emf{$t$-tileable} if there exists
a sequence of partitions
\[
\mu=:\nur{0}\subseteq\nur{1}\subseteq\cdots\subseteq\nur{m-1}
\subseteq\nur{m}:=\la
\]
such that the skew shapes $\nur{r}/\nur{r-1}$ are each $t$-ribbons for 
$1 \leq r\leq m$.
It is a non-trivial fact, see, e.g.~\cite[Lemma~4.1]{Pak00},
that the sign
\begin{equation}\label{Eq_sgn}
\sgn_t(\la/\mu):=(-1)^{\sum_{r=1}^m\mathrm{ht}(\nur{r}/\nur{r-1})}
\end{equation}
is constant over the set of all $t$-ribbon decompositions of $\la/\mu$
(so, indeed, the above is well-defined).
In the case $\mu=\varnothing$ and $t=2$ the above sign is simply equal to
\[
\sgn_2(\la)=(-1)^{\mathrm{odd}(\la)/2}
\]
where $\mathrm{odd}(\la)$ is equal to the number of odd parts of $\la$;
see, e.g., \cite[Equation~(5.15)]{BR01}.

\subsection{Littlewood's decomposition}
Here we describe the Littlewood decomposition through the lens of 
Maya diagrams, which is essentially the \emf{abacus} of James and Kerber
\cite[\S2.7]{JK81} or the \emf{Brettspiele} of Kerber, S\"anger and Wagner
\cite{KSW81}. 
Littlewood's original algebraic description may be found in 
\cite{Littlewood51} and \cite[p.~12]{Macdonald95}.

Given a partition $\la$ its \emf{beta set} is the subset of the half integers 
given by
\[
\beta(\la):=\Big\{\la_i-i+\frac{1}{2} : i\geq 1\Big\}.
\]
This is visualised as a configuration of ``beads'' on the real line placed at 
the positions indicated by $\beta(\la)$, and this visualization is the
\emf{Maya diagram}.
Note that for any partition the configuration will eventually contain only 
beads to the left and only empty spaces to the right.
The map from partitions to Maya diagrams is clearly a bijection,
and one way to reconstruct $\la$ from $\beta(\la)$ is to count the number
of empty spaces to the left of each bead starting from the right.
From the Maya diagram we extract $t$ subdiagrams, called 
\emf{runners}, formed by the beads
at positions $x$ such that $x-1/2$ is equal to $r$ modulo $t$ for 
$0\leq r\leq t-1$. Arranging the runners with $r$ increasing upward 
we obtain the \emf{$t$-Maya diagram}. 
An example of this procedure is given in Figure~\ref{Fig_maya}.
The partitions corresponding to each runner are denoted by $\lar{r}$ according 
to the
residues modulo $t$ of the original positions, and these precisely form 
Littlewood's \emf{$t$-quotient}.

The next important observation is that $t$-hooks in $\la$ correspond
to beads in its $t$-Maya diagram which contain no bead immediately to their 
left.
For example, Figure~\ref{Fig_YD} shows that $(6,5,5,1)$ contains two 
$3$-hooks, and in Figure~\ref{Fig_maya} one bead in runner $0$ and one in 
runner $2$ have free spaces to their left.
Moving such a bead one space to its left removes the $t$-ribbon associated with 
that hook.
Repeating this procedure until all beads are flush-left in the $t$-Maya diagram
produces a unique partition $\tcore(\la)$ which, as the notation suggests,
is a $t$-core.
The uniqueness is clear from the $t$-Maya diagram picture.
Furthermore, the height of the removed ribbon is equal to the number of beads
between its initial and terminal position, i.e., to
$\abs{\beta(\la)\cap \{x-1,\dots,x-t+1\}}$ if we move the bead at position $x$.
Note that in the ordinary Maya diagram this corresponds to the number of beads
``jumped over''.
Let us collect these observations into the following theorem.

\begin{figure}[htb]
\centering
\begin{tikzpicture}[scale=0.7]
\foreach \i in {0,-2,-3,-4}{
\draw[<->,opacity=0.7] (-7.5,\i) -- (7.5,\i);}
\foreach \i in {-7,...,-1,1,2,3,4,5,6,7}{
\foreach \j in {0,-2,-3,-4}{
\draw[opacity=0.5] (\i,\j+0.1) -- (\i,\j-0.1);}}
\foreach \i in {-4,-2,-1,0,1,4,6}{
\draw[thick,fill=white] (\i+0.5,0) circle (5pt);}
\foreach \i in {-6,-3,3}{
\draw[thick,fill=red] (\i+0.5,0) circle (5pt);}
\foreach \i in {-5}{
\draw[thick,fill=blue] (\i+0.5,0) circle (5pt);}
\foreach \i in {-7,2,5}{
\draw[thick,fill=green] (\i+0.5,0) circle (5pt);}
\draw[opacity=0.5,dashed] (0,0.5) -- (0,-0.5);
\foreach \i in {-2,-1,2,3,4,5,6}{
\draw[thick,fill=white] (\i+0.5,-2) circle (5pt);}
\foreach \i in {-1,0,1,2,3,4,5,6}{
\draw[thick,fill=white] (\i+0.5,-3) circle (5pt);}
\foreach \i in {0,2,3,4,5,6}{
\draw[thick,fill=white] (\i+0.5,-4) circle (5pt);}
\foreach \i in {-7,-6,-5,-4,-3,-2,-1,1}{
\draw[thick,fill=red] (\i+0.5,-4) circle (5pt);}
\foreach \i in {-7,-6,-5,-4,-3,-2}{
\draw[thick,fill=blue] (\i+0.5,-3) circle (5pt);}
\foreach \i in {-7,-6,-5,-4,-3,0,1}{
\draw[thick,fill=green] (\i+0.5,-2) circle (5pt);}
\node at (-8,-4) {$\lar{0}$}; \node at (-8,-3) {$\lar{1}$};
\node at (-8,-2) {$\lar{2}$}; \node at (-8,0) {$\la$};
\draw[opacity=0.5,dashed] (0,-1.5) -- (0,-4.5);
\draw[thick,->] (0,-0.6) -- (0,-1.4);
\end{tikzpicture}
\caption{The Maya diagram of $\la=(6,5,5,1)$ (top) and the $3$-Maya diagram of
the same partition (bottom). We have that
$3$-core$(\la)=(1,1)$, $\kappa_3((1,1))=(1,-1,0)$ and
 $(\lar{0},\lar{1},\lar{2})=((1),\varnothing,(2,2))$.}
\label{Fig_maya}
\end{figure}

\begin{theorem}[Littlewood's decomposition]
For any integer $t\geq 2$ the above procedure encodes a bijection
\begin{align*}
\mathscr{P}&\longrightarrow \mathscr{C}_t\times\mathscr{P}^t \\
\la &\longmapsto\big(\tcore(\la),(\lar{0},\dots,\lar{t-1})\big)
\end{align*}
such that $\abs{\la}=\abs{\tcore(\la)}+t(\abs{\lar{0}}+\cdots+\abs{\lar{t-1}})$.
\end{theorem}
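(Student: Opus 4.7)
The plan is to formalise the pictorial procedure already described in the excerpt. Since $\lambda \mapsto \beta(\lambda)$ is a bijection between $\mathscr{P}$ and the set of subsets of $\mathbb{Z}+\tfrac12$ which are eventually left-full and eventually right-empty, it suffices to argue entirely at the level of ($t$-)Maya diagrams. I would split the verification into three parts: (i) $\tcore(\lambda)$ truly lies in $\mathscr{C}_t$; (ii) the assignment admits an explicit inverse; (iii) the size identity.

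For (i), I would use the observation, already made in the excerpt, that a cell of $\lambda$ of hook length $t$ corresponds to a bead of $\beta(\lambda)$ whose immediate predecessor on the same runner in the $t$-Maya diagram is unoccupied. Flush-leftening each runner eliminates every such configuration, so the reconstructed partition has no $t$-hook. Along the way one also sees that the order in which one performs the slides is immaterial: the final configuration on each runner is determined purely by its bead count, which is preserved by sliding.

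For (ii), I would construct the inverse by reading each runner of the $t$-Maya diagram of a given $t$-core $\kappa$ as a singly-charged Maya diagram in its own right; the flush-left configuration records a charge vector $(c_0,\dots,c_{t-1})\in\mathbb{Z}^t$ with $c_0+\dots+c_{t-1}=0$, and conversely any such vector yields a $t$-core. Given now a tuple $(\mur{0},\dots,\mur{t-1})\in\mathscr{P}^t$, I would superimpose the Maya diagram of $\mur{r}$ onto runner $r$ at charge level $c_r$, recombine the runners into an ordinary Maya diagram and read off the resulting partition $\lambda$. One then checks, directly from the constructions, that both compositions yield the identity: flush-leftening the reconstructed $\lambda$ returns the $t$-Maya diagram of $\kappa$, and reading each runner off returns $\mur{r}$.

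For (iii), I would proceed by induction on $\abs{\mur{0}}+\dots+\abs{\mur{t-1}}$. The base case $\mur{r}=\varnothing$ for all $r$ yields $\lambda=\kappa$ and the identity is trivial. The inductive step reduces to the observation that sliding a single bead one position to the left on runner $r$ removes a $t$-ribbon from $\lambda$ (so decreases $\abs{\lambda}$ by exactly $t$) while removing a single cell from $\mur{r}$ (so decreases $\sum_r\abs{\mur{r}}$ by exactly $1$). The main obstacle is really just bookkeeping: pinning down the identification of each runner with its own Maya diagram together with its charge offset, so that the phrase ``the partition on runner $r$'' is unambiguous. Once this normalisation is fixed the remaining arguments are immediate translations of the combinatorics of bead slides.
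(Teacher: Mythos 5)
Your proposal is correct and uses essentially the same Maya-diagram (abacus) approach as the paper; the paper does not give a formal proof block for this theorem but rather establishes it through the surrounding discussion of the $t$-Maya diagram, and your three-part verification (well-definedness, explicit inverse via charge vectors, and size identity by induction on bead slides) matches those observations point for point.
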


When a skew shape $\la/\mu$ is $t$-tileable can be characterised completely
in terms of the Littlewood decomposition of $\la$ and $\mu$.
Since $\la/\mu$ being $t$-tileable means that we may obtain the diagram of
$\mu$ from that of $\la$ by removing ribbons in any order, it
follows that $\la/\mu$ is $t$-tileable if and only if $\tcore(\la)=\tcore(\mu)$
and $\mur{r}\subseteq\lar{r}$ for each $0\leq r\leq t-1$.

We will also need a different characterisation of $t$-cores.
Call a Maya diagram \emf{balanced} if it contains as many beads to the
right of $0$ as empty spaces to the left.
The way we defined Maya diagrams ensures they are always balanced, but
Figure~\ref{Fig_maya} shows that the constituent diagrams of the quotient 
need not be.
Let $c_r^+$ (resp.~$c_r^-$) denote the number of beads to the right of 
$0$ (resp.~number of empty spaces to the left of $0$) in row $\lar{r}$ of
the $t$-Maya diagram.
Now the sequence of integers $(c_0,\dots,c_{t-1})$ defined by
$c_r:=c_r^+-c_r^-$ has total sum zero, and is invariant under valid
bead movements.
As observed by Garvan, Kim and Stanton, this encodes a bijection
\cite[Bijection 2]{GKS90}
\begin{equation}\label{Eq_code}
\kappa_t:\mathscr{C}_t\longrightarrow
\{(c_0,\dots,c_{t-1})\in\mathbb{Z}^t : c_0+\cdots+c_{t-1}=0\}
\end{equation}
such that for $\mu\in\mathscr{C}_t$
\[
\abs{\mu} = \sum_{r=0}^{t-1}\bigg(\frac{tc_r^2}{2}+rc_r\bigg).
\]
In what follows we extend \eqref{Eq_code} to a map
$\mathscr{P}\longrightarrow\mathbb{Z}^t$, the fibres of which are the sets
of all partitions with a given core.

In the introduction we noted that self-conjugate partitions satisfy a nice
symmetry with respect to the Littlewood decomposition.
To explain where this comes from, note that
the conjugate of a partition can be read off its (ordinary) Maya diagram by 
interchanging beads and empty spaces and then reflecting the picture about $0$.
In the $t$-Maya diagram this corresponds to conjugating each runner and 
reversing the order of the runners.
This implies that the $t$-quotient of $\la'$ is given by
$((\lar{t-1})',\dots,(\lar{0})')$ in terms of the $t$-quotient of $\la$.
Furthermore, we have that
$\tcore(\la')=\tcore(\la)'$ which, if $\kappa_t(\la)=(c_0,\dots,c_{t-1})$,
translates to $\kappa_t(\la')=(-c_{t-1},\dots,-c_0)$
in terms of \eqref{Eq_code}.
From these properties it immediately follows that the Littlewood decomposition
of a self-conjugate partition much satisfy  
$c_r+c_{t-r-1}=0$ for $0\leq r\leq t-1$ and
$\lar{r}=(\lar{t-r-1})'$ for $r$ in the same range.
This is equivalent to the conditions given in the introduction.
Garvan, Kim and Stanton \cite[\S8]{GKS90} show that something similar holds 
for $1$-asymmetric partitions.
\begin{prop}
If $\la\in\mathscr{P}_1$ then $\tcore(\la),\lar{0}\in\mathscr{P}_1$ and
the remaining entries in the quotient satisfy
$\lar{r}=(\lar{t-r})'$ for $1\leq r\leq t-1$.
\end{prop}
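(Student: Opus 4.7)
The plan is to reformulate $1$-asymmetry as a Maya-diagram symmetry and then transport that symmetry through the Littlewood decomposition, paralleling the self-conjugate sketch given earlier in the paper. My first step would be to establish that $\lambda\in\mathscr{P}_1$ if and only if $1/2\notin\beta(\lambda)$ and the involution $x\mapsto 1-x$ on $(\mathbb{Z}+1/2)\setminus\{1/2\}$ interchanges $\beta(\lambda)$ with its complement. This translates the Frobenius form $\lambda=(u+1\mid u)$ directly: the positive beads $\{u_i+3/2\}$ and the negative empties $\{-u_j-1/2\}$ are exactly swapped by $x\mapsto 1-x$, and the converse is a short matching of Frobenius coordinates.

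Next I would trace the involution through the $t$-Maya diagram. Since $x\equiv r+1/2\pmod{t}$ implies $1-x\equiv (t-r)+1/2\pmod{t}$, the map sends runner $r$ to runner $t-r\bmod t$, pairing runner $0$ with itself and runner $r$ with runner $t-r$ for $1\leq r\leq t-1$. Writing a position as $x=r+1/2+tk$, the reflection becomes $k\mapsto-k$ on runner $0$ and $k\mapsto-k-1$ across each pair. Passing to the local half-integer coordinate $y=k+1/2$ on each runner, these read $y\mapsto 1-y$ and $y\mapsto -y$ respectively.

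From here the proposition is immediate. For $1\leq r\leq t-1$, the map $y\mapsto -y$ combined with the bead--empty swap is precisely the conjugation rule $m_{\mu'}(y)=1-m_\mu(-y)$, so $\lambda^{(r)}=(\lambda^{(t-r)})'$. For $r=0$, the map $y\mapsto 1-y$ with bead--empty swap is exactly the reformulated $1$-asymmetry condition on $\lambda^{(0)}$, and its fixed point $y=1/2$ lies over $x=1/2$, which is empty in $\lambda$; hence $\lambda^{(0)}\in\mathscr{P}_1$. Finally, the $t$-core is obtained by pushing every bead as far left as possible within each runner, an operation that acts independently on each runner and hence commutes with the paired involution; so the $t$-Maya diagram of $\tcore(\lambda)$ still satisfies the symmetry, and $\tcore(\lambda)\in\mathscr{P}_1$ by the characterisation from the first step.

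I expect the main obstacle to be bookkeeping: keeping the three coordinate systems (ambient $x$, runner label $r$, and local runner coordinate $k$ or $y$) aligned, and verifying that the lone fixed point $x=1/2$ really does live in runner $0$ and play the role of the corresponding fixed point $y=1/2$ for the induced symmetry on $\lambda^{(0)}$. Once these conventions are pinned down, the rest of the argument is formal.
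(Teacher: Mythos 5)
Your proof is correct, and it takes a genuinely different route from what the paper does. The paper cites Garvan--Kim--Stanton for this proposition without proof, and proves the general $z$-asymmetric analogue (Theorem~\ref{Thm_zAsym}) by induction on $z$ via the ``cut and twist'' map on the $t$-Maya diagram, with the $z=0$ self-conjugate case as the base. You instead give a direct characterisation of $1$-asymmetry as a global Maya-diagram involution $x\mapsto 1-x$ with bead--empty swap, exactly in the spirit of the paper's $x\mapsto -x$ discussion for $z=0$, and then transport it runner-by-runner. The payoff is a one-shot, self-contained argument for $z=1$; the trade-off is that for $z\geq 2$ the analogous involution $x\mapsto z-x$ no longer lands cleanly inside each runner pair, and the quotient entries are related by \emph{shifted} conjugation rather than pure conjugation, which is why the paper's inductive cut-and-twist is the one that scales to Theorem~\ref{Thm_zAsym}.

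One step you should make explicit: concluding $\lar{0}\in\mathscr{P}_1$ from the induced symmetry on runner $0$ requires the local runner-$0$ diagram to be \emph{balanced} at its local origin, i.e.\ $c_0=0$; otherwise the symmetry centre $y=1/2$ and the origin used to read off the partition $\lar{0}$ do not coincide, and the characterisation from your first step does not directly apply. This is in fact forced by what you already have: since $y=1/2$ is fixed and empty, the involution $y\mapsto 1-y$ restricts to a bijection between the beads on runner $0$ at $y\geq 3/2$ and the empties at $y\leq -1/2$, so $c_0^+=c_0^-$. The same observation handles the $t$-core: after pushing the beads flush-left, runner $0$ has its beads exactly on $y\leq -1/2$ because $c_0=0$, so $1/2\notin\beta(\tcore(\la))$ and the characterisation applies. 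With that sentence added, the argument is complete; for $t$ even it is worth noting that the middle runner $r=t/2$ is paired with itself, where $y\mapsto -y$ with swap says $\lar{t/2}$ is self-conjugate, consistent with the stated $\lar{r}=(\lar{t-r})'$.
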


Our first main result is a generalisation of this proposition to 
$z$-asymmetric partitions.
To fix some notation, 
let $\mathcal{C}_{z;t}\subset\mathbb{Z}^t$ consist of those 
$t$-tuples for which $c_{r}+c_{z-r-1}=0$ for $0\leq r\leq z-1$ and 
$c_s+c_{t+z-s-1}=0$ for $z\leq s\leq t-1$.
Also recall the $c$-shifted Frobenius rank $\rk_c(\la)$ from the previous
Subsection~\ref{Sec_prelims}.

\begin{theorem}\label{Thm_zAsym}
Let $t\geq 2$ and $z$ be integers and $\la$ a partition such that
$0\leq z\leq t-1$ and $\la\in\mathscr{P}_z$.
Then $\kappa_t(\tcore(\la))\in \mathcal{C}_{z;t}$ and the quotient
$(\lar{0},\dots,\lar{t-1})$ is such that 
for $0\leq r\leq z-1$ with $c_r\geq0$ there exist partitions
$\nur{r}$ with 
\begin{subequations}\label{Eq_folding}
\begin{equation}
\lar{r}=\nur{r}+(1^{c_r+\rk_{c_r}(\nur{r})})\quad\text{and}\quad
\lar{z-r-1}=(\nur{r})' + (1^{\rk_{c_r}(\nur{r})}) \label{Eq_zFold}
\end{equation}
and for $z\leq s\leq t-1$,
\begin{equation}
\lar{s}=(\lar{t+z-s-1})'\label{Eq_tFold}.
\end{equation}
\end{subequations}
\end{theorem}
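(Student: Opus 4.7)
My plan is to re-express $z$-asymmetry as a symmetry of the Maya diagram of $\la$ and transport it through the $t$-Maya decomposition. Writing $\la=(u+z\mid u)$ in Frobenius notation, the beta-set has positive beads precisely at $\{u_i+z+1/2\}$ and negative empty spaces precisely at $\{-u_i-1/2\}$. Hence for $z\geq 0$ the involution $x\mapsto z-x$ swaps beads and non-beads of $\beta(\la)$ outside the ``middle range'' $M:=\{1/2,3/2,\dots,z-1/2\}$, while every position in $M$ is empty in $\beta(\la)$; conversely, any balanced bead configuration satisfying these two properties is the Maya diagram of a $z$-asymmetric partition.

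On the $t$-runners this involution pairs runner $r$ with runner $z-r-1$ via $j\leftrightarrow -j$ for $0\leq r\leq z-1$ (the middle-empty condition forcing $0\notin\beta_r$ on those runners), and pairs runner $s$ with runner $t+z-s-1$ via $j\leftrightarrow -j-1$ for $z\leq s\leq t-1$. A direct count of beads right of $0$ against empty spaces left of $0$ on each paired pair immediately gives $c_r+c_{z-r-1}=0$ and $c_s+c_{t+z-s-1}=0$; since charges are invariant under ribbon removal, this proves $\kappa_t(\tcore(\la))\in\mathcal{C}_{z;t}$.

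Shifting each runner by its charge then produces the balanced beta-set $\beta(\lar{r})$. In the tail range the symmetry $j\leftrightarrow -j-1$ becomes the conjugation relation $x\in\beta(\lar{s})$ iff $-x\notin\beta(\lar{t+z-s-1})$, which yields $\lar{s}=(\lar{t+z-s-1})'$ and establishes \eqref{Eq_tFold}. In the middle range (assume $c_r\geq 0$, the other case being symmetric), the same translation, combined with the middle-empty constraint, converts the runner symmetry into the Maya-level identity $\beta(\lar{r})\cup\{1/2-c_r\}=\beta((\lar{z-r-1})')+1$, with $1/2-c_r\notin\beta(\lar{r})$. To extract $\nur{r}$ I define its beta-set by shifting left by one every bead of $\beta(\lar{r})$ at position $\geq 3/2-c_r$ and leaving the rest unchanged; setting $N:=|\beta(\lar{r})\cap[3/2-c_r,\infty)|$ and $m:=N-c_r$, I then check (a) that the result is the balanced beta-set of a partition $\nur{r}$ with $\rk_{c_r}(\nur{r})=m$, and (b) that $\lar{r}=\nur{r}+(1^{c_r+m})$, because adding the column $(1^k)$ to a partition shifts the top $k$ beads of its beta-set right by one. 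Applying the standard complement-and-reflect conjugation to $\beta(\nur{r})$ and then adding $(1^m)$ finally produces $\beta(\lar{z-r-1})$ via the Maya-level identity above, giving \eqref{Eq_zFold}. The delicate step is this last bookkeeping: the single excised bead at $1/2-c_r$ accounts exactly for the discrepancy between the exponents $c_r+m$ and $m$ appearing in the two formulas of \eqref{Eq_zFold}.
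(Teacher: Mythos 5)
Your proposal is correct and takes a genuinely different route from the paper. The paper proves Theorem~\ref{Thm_zAsym} by induction on $z$: the base case $z=0$ is the known self-conjugate case, and the inductive step compares the $t$-Maya diagrams of $(v+z-1\mid v)$ and $(v+z\mid v)$ via a ``cut and twist'' map that cyclically rotates the positive-position beads up one runner (with an extra right shift when wrapping from runner $t-1$ to runner $0$). Your proof is instead direct: you recognise $z$-asymmetry as the anti-symmetry $x\mapsto z-x$ of the Maya diagram outside the middle window $M=\{1/2,\dots,z-1/2\}$ (with $M$ forced empty), push this involution through the residue-class decomposition to obtain the runner pairings $r\leftrightarrow z-r-1$ (via $j\leftrightarrow -j$) and $s\leftrightarrow t+z-s-1$ (via $j\leftrightarrow -j-1$), read off the charge relations and the tail conjugation condition directly, and then in the middle range excise the single empty position at $1/2-c_r$ to peel off the column shift and identify $\nur{r}$. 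I checked the bookkeeping and it does close up: the tail identity $\beta(\lar{s})=\{y:-y\notin\beta(\lar{t+z-s-1})\}$ and the middle identity $\beta(\lar{r})\cup\{1/2-c_r\}=\beta((\lar{z-r-1})')+1$ with $1/2-c_r\notin\beta(\lar{r})$ are both correct, and the bead-shift description of $+(1^k)$ does recover both halves of \eqref{Eq_zFold} with the same $m=N-c_r$. The paper's induction is shorter to state but the mechanism is somewhat hidden in \eqref{Eq_dsum}; your direct argument makes the bead-level picture transparent and simultaneously proves the slightly stronger refinements $c_r^+=c_{z-r-1}^-$ and $c_s^+=c_{t+z-s-1}^-$ that the paper has to extract from the inductive hypothesis. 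One point worth writing out explicitly in step (a): you need $N\geq c_r$ so that $m\geq 0$ and the top $c_r$ beads of $\nur{r}$ are all shifted ones. This follows from balance of $\lar{r}$ together with the emptiness of position $1/2-c_r$ (for $c_r\geq 1$, the count of empty negative positions $\leq -1/2-c_r$ equals $\rk(\lar{r})-c_r+a$, where $a$ is the number of beads in $(1/2-c_r,0)$; nonnegativity of that count gives $N=\rk(\lar{r})+a\geq c_r$), but it is not immediate and should be stated.
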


\begin{proof}
The proof is by induction on $z$.
For $z=0$ the result is clear from the properties of self-conjugate
partitions under the Littlewood decomposition. 
Now choose a strict partition $v$ and let $\la=(v+z-1~\vert~v)$ for 
some fixed $z\geq 1$. Assume that $\kappa_t(\tcore(\la))
\in\mathcal{C}_{z-1;t}$ and further that the conditions \eqref{Eq_folding}
are satisfied (with $z$ replaced by $z-1$ in the latter).
We wish to show that the partition $\mu=(v+z~\vert~v)$ 
has $\kappa_t(\tcore(\mu))\in\mathcal{C}_{z;t}$ and
that the conditions \eqref{Eq_folding} hold for $\mu$.
Also set $\kappa_t(\tcore(\la))=(c_1,\dots,c_{t-1})$ and
$\kappa_t(\tcore(\mu))=(d_1,\dots,d_{t-1})$.

The key observation is that we may obtain the $t$-Maya diagram of $\mu$ from
that of $\la$ as follows: beads lying at positive positions are moved 
upwards cyclically one runner in the same column, except those passing
from $\lar{t-1}$ to $\lar{0}$, which move an additional space to the right.
An example of this is given in Figure~\ref{Fig_mayaCut}.
If we imagine that the $t$-Maya diagram is wrapped around a bi-infinite 
cylinder, then
this corresponds to cutting the cylinder along $0$, ``twisting'' so that
beads passing from $r=t-1$ to $r=0$ are also moved one space to the
right, and then re-gluing.
From this construction we observe that for $0\leq r\leq z-1$
\begin{equation}\label{Eq_dsum}
d_r + d_{z-r-1} = c_{r-1}^+-c_r^-+c_{z-r-2}^+-c_{z-r-1}^-.
\end{equation}
We already have that $c_r+c_{z-r-2}=0$ for $0\leq r\leq z-2$ by our assumption.
However, \eqref{Eq_zFold} implies the slightly stronger condition
that $c_r^-=c_{z-r-2}^+$ (or, equivalently, $c_r^+=c_{z-r-2}^-$).
This is because conjugation of a runner interchanges $c_r^+$ and $c_r^-$.
Thus, in the range $1\leq r\leq z-2$ we have that \eqref{Eq_dsum} vanishes.
For $r=0$ one needs to use that $\lar{z}=(\lar{t-1})'$ and
$c_z+c_{t-1}=0$.
The same argument in the range $z\leq s\leq t-1$ completes the proof that 
$\kappa_t(\tcore(\mu))\in\mathcal{C}_{z;t}$.
\begin{figure}[htb]
\centering
\begin{tikzpicture}[scale=0.7]
\foreach \i in {0,-1,-2,-4,-5,-6}{
\draw[<->,opacity=0.7] (-7.5,\i) -- (7.5,\i);}
\foreach \i in {-7,...,-1,1,2,3,4,5,6,7}{
\foreach \j in {0,-1,-2,-4,-5,-6}{
\draw[opacity=0.5] (\i,\j+0.1) -- (\i,\j-0.1);}}
\draw[opacity=0.5,dashed] (0,0.5) -- (0,-2.5);
\foreach \i in {-2,-1,2,3,4,5,6}{
\draw[thick,fill=white] (\i+0.5,0) circle (5pt);}
\foreach \i in {-1,0,1,2,3,4,5,6}{
\draw[thick,fill=white] (\i+0.5,-1) circle (5pt);}
\foreach \i in {0,2,3,4,5,6}{
\draw[thick,fill=white] (\i+0.5,-2) circle (5pt);}
\foreach \i in {-7,-6,-5,-4,-3,-2,-1,1}{
\draw[thick,fill=red] (\i+0.5,-2) circle (5pt);}
\foreach \i in {-7,-6,-5,-4,-3,-2}{
\draw[thick,fill=blue] (\i+0.5,-1) circle (5pt);}
\foreach \i in {-7,-6,-5,-4,-3,0,1}{
\draw[thick,fill=green] (\i+0.5,0) circle (5pt);}
\foreach \i in {-2,-1,0,1,2,3,4,5,6}{
\draw[thick,fill=white] (\i+0.5,-4) circle (5pt);}
\foreach \i in {-1,0,2,3,4,5,6}{
\draw[thick,fill=white] (\i+0.5,-5) circle (5pt);}
\foreach \i in {0,3,4,5,6}{
\draw[thick,fill=white] (\i+0.5,-6) circle (5pt);}
\foreach \i in {-7,-6,-5,-4,-3,-2,-1,1,2}{
\draw[thick,fill=red] (\i+0.5,-6) circle (5pt);}
\foreach \i in {-7,-6,-5,-4,-3,-2,1}{
\draw[thick,fill=blue] (\i+0.5,-5) circle (5pt);}
\foreach \i in {-7,-6,-5,-4,-3}{
\draw[thick,fill=green] (\i+0.5,-4) circle (5pt);}
\node at (-8,-2) {$\lar{0}$}; \node at (-8,-1) {$\lar{1}$};
\node at (-8,0) {$\lar{2}$};
\node at (-8,-6) {$\mur{0}$}; \node at (-8,-5) {$\mur{1}$};
\node at (-8,-4) {$\mur{2}$};
\draw[opacity=0.5,dashed] (0,-3.5) -- (0,-6.5);
\draw[thick,->] (0,-2.6) -- (0,-3.4);
\end{tikzpicture}
\caption{The $3$-Maya diagram of $(6,5,5,1)$ (top) and the $3$-Maya diagram
of $(7,6,6,1)$ (bottom) corresponding to action the ``cut and twist'' map.}
\label{Fig_mayaCut}
\end{figure}
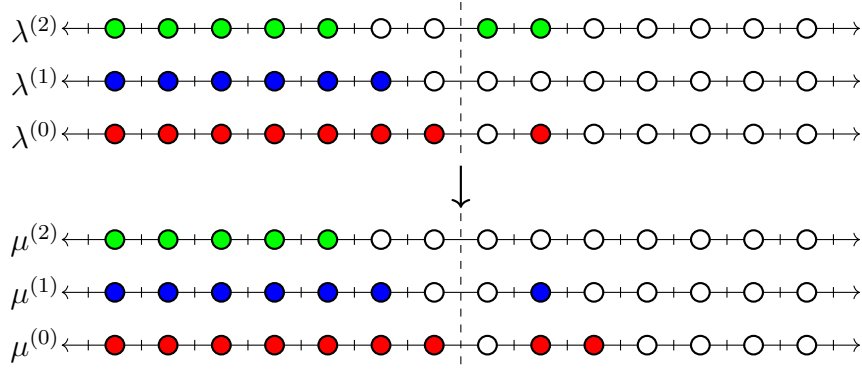

Let $\lar{r}_{<0}$ (resp.~$\lar{r}_{>0}$) denote the negative 
(resp.~positive) half of the runner corresponding to $\lar{r}$.
It is clear that, when indices are read modulo $t$,
$\mur{r}=\lar{r}_{<0}\cup\lar{r-1}_{>0}$ and
$\mur{z-r-1}=\lar{z-r-1}_{<0}\cup\lar{z-r-2}_{>0}$ for $0\leq r\leq z-1$.
Since the $\lar{r}$ satisfy $\eqref{Eq_folding}$ in the range $1\leq r\leq z-2$,
then so will the $\mur{r}$ with $c_r$ replaced by $d_r$.
The cases $r=0$ and for $z\leq s\leq t-1$ follow by the same argument,
the former using the fact that the positive beads in $\lar{z-1}_{>0}$ will 
move one space to the right.
\end{proof}

While we have stated the above theorem only for $0\leq z\leq t-1$, it may be
extended to arbitrary $z\geq 0$. Since this is not as elegant as the above,
we now state this separately as a corollary.

\begin{coro}\label{Coro_zAsymPrime}
Let $t\geq 2$ and $z=at+b$ be integers and $\la$ a partition such that
$0\leq b\leq t-1$ and $\la\in\mathscr{P}_z$.
Then $\kappa_t(\tcore(\la))\in \mathcal{C}_{b;t}$ and the quotient
$(\lar{0},\dots,\lar{t-1})$ is such that 
for $0\leq r\leq b-1$ with $c_r\geq0$ there exist partitions
$\nur{r}$ with 
\begin{subequations}\label{Eq_folding2}
\begin{equation}
\lar{r}=\nur{r}+((a+1)^{c_r+\rk_{c_r}(\nur{r})})\quad\text{and}\quad
\lar{b-r-1}=(\nur{r})' + ((a+1)^{\rk_{c_r}(\nur{r})}), \label{Eq_bFold}
\end{equation}
and for $b\leq s\leq t-1$ with $c_s\geq0$ there exist partitions
$\xir{s}$ with
\begin{equation}
\lar{s}=\xir{s}+(a^{c_s+\rk_{c_s}(\xir{s})})\quad\text{and}\quad
\lar{t+b-s-1}=(\xir{s})' + (a^{\rk_{c_s}(\xir{s})})\label{Eq_tFold2}.
\end{equation}
\end{subequations}
\end{coro}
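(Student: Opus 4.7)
I would induct on $a$, taking the base case $a=0$ to be Theorem~\ref{Thm_zAsym}: when $a=0$ the second family of conditions \eqref{Eq_tFold2} collapses to $\lar{s}=\xir{s}$ and $\lar{t+b-s-1}=(\xir{s})'$, which together are equivalent to \eqref{Eq_tFold}.

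For the inductive step, I would fix a strict partition $v$ and compare $\la=(v+b\mid v)\in\mathscr{P}_b$ with $\mu=(v+at+b\mid v)\in\mathscr{P}_{at+b}$. Passing from $\la$ to $\mu$ increases each of the first $\rk(\la)=l(v)$ parts by $at$, which on the ordinary Maya diagram shifts every positive bead to the right by $at$ positions. Since $at\equiv0\pmod{t}$, in the $t$-Maya diagram every positive bead stays on its original runner and moves $a$ spaces to the right, while negative beads are fixed. I would conclude that $\kappa_t(\tcore(\mu))=\kappa_t(\tcore(\la))\in\mathcal{C}_{b;t}$ and that, for each $r$,
\[
\mur{r}=\lar{r}+(a^{c_r^+}),
\]
where $c_r^+$ is the number of positive beads on runner $r$ of the $t$-Maya diagram of $\la$.

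The remaining task is to identify $c_r^+$ with the shoulder heights occurring in Theorem~\ref{Thm_zAsym}. A direct beta-set computation gives $c_r^+=c_r+\rk_{c_r}(\lar{r})$; combining this with $\lar{r}=\nur{r}+(1^{c_r+\rk_{c_r}(\nur{r})})$ from \eqref{Eq_zFold} and the observation that appending a column of height $c_r+\rk_{c_r}(\nur{r})$ to $\nur{r}$ leaves $\rk_{c_r}$ invariant, I would deduce $c_r^+=c_r+\rk_{c_r}(\nur{r})$ for $0\leq r\leq b-1$. For $b\leq s\leq t-1$ I would set $\xir{s}:=\lar{s}$, so that the same computation yields $c_s^+=c_s+\rk_{c_s}(\xir{s})$. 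Substituting into the shift formula then converts $\lar{r}=\nur{r}+(1^{c_r+\rk_{c_r}(\nur{r})})$ into $\mur{r}=\nur{r}+((a+1)^{c_r+\rk_{c_r}(\nur{r})})$, matching \eqref{Eq_bFold}; the paired condition for $\mur{b-r-1}$ and the statement \eqref{Eq_tFold2} for $b\leq s\leq t-1$ follow by an identical substitution.

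The main obstacle will be the two combinatorial identities just invoked, namely $c_r^+=c_r+\rk_{c_r}(\lar{r})$ and $\rk_{c_r}(\nur{r}+(1^{c_r+\rk_{c_r}(\nur{r})}))=\rk_{c_r}(\nur{r})$. Both are elementary consequences of the beta-set/Frobenius dictionary, but they are precisely what ensures that the underlying partitions $\nur{r}$ furnished by Theorem~\ref{Thm_zAsym} at $z=b$ continue to witness the folding \eqref{Eq_bFold} at the value $z=at+b$, so that the induction closes with the same $\nur{r}$ and $\xir{s}$ throughout.
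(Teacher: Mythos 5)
Your proposal is correct and takes essentially the same route as the paper, whose one-sentence proof points out that iterating the cut-and-twist map $at$ times shifts every positive bead $a$ columns to the right on its runner; you simply observe this directly by adding $at$ to the Frobenius arms of a $b$-asymmetric partition and then carry out the bookkeeping. The two elementary identities you flag, $c_r^+ = c_r + \rk_{c_r}(\lar{r})$ (which is implicit in the paper's proof of Lemma~\ref{Lem_rank}) and $\rk_{c_r}\big(\nur{r}+(1^{c_r+\rk_{c_r}(\nur{r})})\big)=\rk_{c_r}(\nur{r})$, are both true and suffice to close the argument.
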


This corollary follows simply from the observation that $t$ iterations of the 
``cut and twist'' map used in the previous proof shift all beads at positive
positions one place to the right. 
If $b$ is odd then \eqref{Eq_bFold} says that 
$\lar{(b-1)/2}\in\mathscr{P}_{a+1}$ and if $t+b$ is odd then
\eqref{Eq_tFold2} says that $\lar{(t+b-1)/2}\in\mathscr{P}_a$.
Since we may obtain negative $z$ by 
conjugation, Corollary~\ref{Coro_zAsymPrime} gives a characterisation of
$z$-asymmetric partitions under the Littlewood decomposition.

Our next corollary,  which will prove useful in the statement of our main 
results, characterises when a $t$-core is $z$-asymmetric, and gives the minimal 
$z$-asymmetric partition with a given core.
The first part of this is due to Ayyer and Kumari \cite[Lemma~3.6]{AK22}
in a slightly different form.

\begin{coro}\label{Cor_minimal}
A $t$-core $\mu$ is $z$-asymmetric if and only if $0 \leq z\leq t-2$ and
$\kappa_t(\mu)$ satisfies $c_r=0$ for $0\leq r\leq z-1$.
Moreover, for any sequence $\mathbf{c}\in \mathcal{C}_{z;t}$ the unique 
$z$-asymmetric partition $\la$ with $\kappa_t(\tcore(\la))=\mathbf{c}$
and minimal $\abs{\la}$ 
has quotient $\lar{r}=(1^{c_r})$ for those $r$ with $0\leq r\leq z-1$ and
$c_r>0$.
\end{coro}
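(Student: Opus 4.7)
\textbf{Proof plan for Corollary \ref{Cor_minimal}.} The approach is to derive both parts from Corollary \ref{Coro_zAsymPrime} applied to $\mu$: for the first claim I exploit that a $t$-core has trivial $t$-quotient (all $\mur{r}=\varnothing$), and for the second claim I minimise the size formula $\abs{\la}=\abs{\tcore(\la)}+t(\abs{\lar{0}}+\cdots+\abs{\lar{t-1}})$ term by term.

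For the first claim, the forward direction proceeds by writing $z=at+b$ with $0\leq b\leq t-1$ and plugging $\mur{r}=\varnothing$ into \eqref{Eq_bFold} and \eqref{Eq_tFold2}. The former forces $\nur{r}=\varnothing$ and $c_r=0$ for $0\leq r\leq b-1$ (treating $c_r\geq 0$ and using the pairing in $\mathcal{C}_{b;t}$ to dispatch the other sign), while the latter forces $c_s=0$ for $b\leq s\leq t-1$ whenever $a\geq 1$, since then $((a+1)^{c_s+\rk_{c_s}(\xir{s})})$ is non-empty as soon as $c_s\geq 1$. Hence $a=0$ and $z=b\leq t-1$, and the boundary case $z=t-1$ additionally forces $c_{t-1}=0$ from $\sum_r c_r=0$, trivialising $\mu$ and leaving the sharper range $0\leq z\leq t-2$ for nontrivial cores. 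For the converse I would use the explicit description $\beta(\mu)=\bigsqcup_{r=0}^{t-1}\{kt+r+1/2:k<c_r\}$ of the beta set of a $t$-core, read off its positive beads $\{kt+r+1/2:0\leq k<c_r\}$ and negative empty positions $\{kt+r+1/2:c_r\leq k\leq -1\}$, and verify by a residue-class-by-residue-class matching that the hypotheses $\kappa_t(\mu)\in\mathcal{C}_{z;t}$ together with $c_r=0$ for $0\leq r\leq z-1$ exactly ensure these positions assemble into the Frobenius form $(u+z\mid u)$ for a strict sequence $u$ of non-negative integers.

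For the second claim, I would apply Theorem \ref{Thm_zAsym} to write $\abs{\lar{r}}+\abs{\lar{z-r-1}}=2\abs{\nur{r}}+2\rk_{c_r}(\nur{r})+c_r$ for each pair $0\leq r\leq z-1$ with $c_r\geq 0$ via \eqref{Eq_zFold}, which is minimised at the value $c_r$ by $\nur{r}=\varnothing$; and $\abs{\lar{s}}=\abs{\lar{t+z-s-1}}$ for $z\leq s\leq t-1$ via \eqref{Eq_tFold}, which is minimised at $0$. Since $\abs{\tcore(\la)}$ is determined by $\mathbf{c}$, the size formula is then minimised by the unique $z$-asymmetric partition whose quotient has $\lar{r}=(1^{c_r})$ for exactly those $r$ in $0\leq r\leq z-1$ with $c_r>0$ and empty quotient parts elsewhere, as claimed. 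The main obstacle will be the converse direction of the first claim: Corollary \ref{Coro_zAsymPrime} supplies necessary but not obviously sufficient conditions on $\kappa_t(\mu)$, so the residue-class bookkeeping on the Maya diagram is the technical heart of the argument, with the condition $c_r=0$ for $0\leq r\leq z-1$ precisely killing the would-be negative contributions that would otherwise prevent $\mu$ from having Frobenius form $(u+z\mid u)$.
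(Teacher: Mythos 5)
Your proposal is essentially correct and follows the same basic strategy as the paper, but it is substantially more explicit at exactly the point where the paper is terse. The paper's published proof simply invokes Theorem~\ref{Thm_zAsym}, notes that a $t$-core has trivial quotient so all $\lar{r}=\varnothing$, and observes that \eqref{Eq_zFold} admits $\varnothing$ as a solution iff $c_r=0$; the second claim is then declared ``immediate.'' You route through Corollary~\ref{Coro_zAsymPrime} instead (writing $z=at+b$), which buys you the bonus of showing directly why $z\geq t-1$ cannot occur for a nontrivial $t$-core: the shifts by $(a^{c_s+\rk_{c_s}(\xir{s})})$ in \eqref{Eq_tFold2} would force extra $c_s=0$ constraints once $a\geq 1$, collapsing $\mu$ to $\varnothing$. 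Your most valuable contribution, though, is the converse direction. Theorem~\ref{Thm_zAsym} and its corollary are stated as one-way implications ($\la\in\mathscr{P}_z\Rightarrow$ conditions), so the ``if'' half of the corollary needs an independent argument; you correctly identify this and propose verifying it via the explicit beta set $\beta(\mu)=\bigsqcup_r\{kt+r+\frac12:k<c_r\}$ of a $t$-core. That computation does go through: shifting the positive-bead residues by $-z$ and matching against the below-diagonal residues is exactly what the hypothesis $c_r=0$ for $r<z$ (to kill the would-be negative entries) together with $c_s+c_{t+z-s-1}=0$ (for the matching in residues $\geq z$) accomplishes, producing the Frobenius form $(u+z\mid u)$. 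Your term-by-term minimisation of $\abs{\la}=\abs{\tcore(\la)}+t\sum_r\abs{\lar{r}}$ for the second claim is also correct and more transparent than the paper's ``immediate.'' One small point: the boundary $z=t-1$ still admits the empty partition as a (vacuous) $z$-asymmetric $t$-core, so the ``if and only if'' in the corollary statement is really about nontrivial $\mu$; your argument handles this by showing $\mu$ is forced to be $\varnothing$ there, which is consistent.
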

\begin{proof}
By Theorem~\ref{Thm_zAsym} a $z$-asymmetric partition $\mu$ must have
$\kappa_t(\mu)\in \mathcal{C}_{z;t}$ and 
$\lar{r}=\varnothing$ for all $0\leq r\leq t-1$.
However, the restrictions \eqref{Eq_zFold} admit the empty partition as a
solution if and only if $c_r=0$.
The second part of the corollary is then immediate.
\end{proof}

This shows that while the $t$-core of a $0$- or $1$-asymmetric partition is 
always itself $0$- or $1$-asymmetric, the same is not necessarily
true for $z$-asymmetric partitions when $z\geq 2$.
Indeed, our running example of $(6,5,5,1)$ is $2$-asymmetric but has 
$t$-core $(1,1)$ which is clearly not $2$-asymmetric.

A key tool we need below is an expression for the Frobenius rank of a 
partition in terms of the Frobenius ranks of its core and quotient. 
This is due to Brunat and Nath, however, we restate it in our terminology and 
provide a short proof.
Some related results about the Frobenius ranks of $-1$-, $0$- and
$1$-asymmetric partitions may be found in \cite[Lemma~3.13]{AK22}.

\begin{lemma}[{\cite[Corollary~3.29]{BN19}}]\label{Lem_rank}
For any partition $\la$ and integer $t\geq 2$,
\[
\rk(\la)=\rk(\tcore(\la))+\sum_{r=0}^{t-1}\rk_{c_r}(\lar{r}).
\]
\end{lemma}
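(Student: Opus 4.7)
The plan is to work entirely on the level of Maya diagrams. The key identity I would exploit is that for any partition $\mu$, $\rk(\mu)$ equals the number of strictly positive half-integers in $\beta(\mu)$: the cell $(i,i)$ lies on the main diagonal of $\mu$ iff $\mu_i\geq i$ iff $\mu_i-i+\tfrac12>0$. Partitioning $\beta(\la)$ by runner of the $t$-Maya diagram immediately gives $\rk(\la)=\sum_{r=0}^{t-1}c_r^+$. Applying the same principle to $\tcore(\la)$, whose runners are each flush-left, gives $\rk(\tcore(\la))=\sum_{r=0}^{t-1}\max(c_r,0)$, since on a flush runner the positive beads occupy exactly the first $c_r$ local slots when $c_r>0$ and none otherwise.

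With these two formulas in hand, the lemma reduces to the per-runner identity
\[
c_r^+=\max(c_r,0)+\rk_{c_r}(\lar{r}).
\]
To prove it, I would identify runner $r$ of the $t$-Maya of $\la$ (in local half-integer coordinates) with $\beta(\lar{r})$ shifted by $c_r$: the beads lie at $\{\lar{r}_i-i+c_r+\tfrac12:i\geq 1\}$. Thus $c_r^+$ counts those $i\geq 1$ with $\lar{r}_i\geq i-c_r$. For $c_r\geq 0$, the indices $i=1,\dots,c_r$ contribute automatically (as $\lar{r}_i\geq 0\geq i-c_r$), and writing $i=c_r+j$ with $j\geq 1$ reduces the remaining condition to $\lar{r}_{c_r+j}\geq j$, whose number of solutions is $\rk_{c_r}(\lar{r})$ by the definition of the shifted Frobenius rank. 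For $c_r<0$, the condition becomes $\lar{r}_i\geq i+|c_r|$, which counts precisely the diagonal cells of $\lar{r}$ after removing its first $|c_r|$ columns, i.e., $\rk_{c_r}(\lar{r})$.

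Summing the per-runner identity over $r$ yields the lemma. The main obstacle is essentially bookkeeping: fixing the local-coordinate convention so that the shift by $c_r$ correctly relates the runner's (unbalanced) Maya to $\beta(\lar{r})$, and translating the strict positivity condition on half-integers into the right weak inequality on the parts of $\lar{r}$. Once that convention is pinned down, the combinatorial counting in each sign case is direct, and no further machinery beyond the definitions of $\beta$, $c_r$, and $\rk_c$ is needed.
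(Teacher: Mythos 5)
Your proof is correct and follows essentially the same route as the paper: identify $\rk(\la)$ with $\sum_r c_r^+$ by counting beads at positive positions in the $t$-Maya diagram, identify $\rk(\tcore(\la))$ with $\sum_r \max(c_r,0)$ from the flush-left configuration, and absorb the difference into $\rk_{c_r}(\lar{r})$ runner by runner. The only stylistic difference is that you package the argument as a single per-runner identity $c_r^+=\max(c_r,0)+\rk_{c_r}(\lar{r})$ derived via the explicit shift $\beta(\lar{r})+c_r$, whereas the paper splits $\sum_r c_r^+$ into three sums and handles the cases $c_r>0$, $c_r=0$, $c_r<0$ more discursively; your version is slightly tighter but not a different argument.
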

\begin{proof}
Let $\kappa_t(\la)=(c_0,\dots,c_1)$.
As we have already remarked, $\rk(\la)$ is equal to the number of beads
at positive positions in the Maya or $t$-Maya diagram, i.e.,
$\rk(\la)=\sum_{r=0}^{t-1} c_r^+$.
A simple rewriting of this expression gives
\[
\sum_{r=0}^{t-1} c_r^+=\sum_{\substack{r=0\\ c_r>0}}^{t-1} (c_r^+-c_r^-)
+\sum_{\substack{r=0\\ c_r>0}}^{t-1}c_r^-
+\sum_{\substack{r=0\\ c_r\leq 0}}^{t-1}c_r^+.
\]
The first sum on the right is equal to $\rk(\tcore(\la))$ since, after pushing
all beads to the left, this will count the beads remaining at positive 
positions.
If $c_r=0$ then the beads on runner $r$ do not contribute to $\rk(\tcore(\la))$
and so $\rk(\lar{r})=c_r^+$.
Now consider the case $c_r>0$. Counted from the right, the first $c_r$ beads
in this runner are already accounted for by $\rk(\tcore(\la))$.
The quantity $c_r^-=c_r^+-c_r$ then counts the number of remaining beads
at positive positions, which is equal to the Frobenius rank of $\la$ with 
the first $c_r$ rows removed, i.e., to $\rk_{c_r}(\la)$.
By conjugation the same argument works in the case $c_r<0$, completing
the proof.
\end{proof}

Observe that if the $t$-core of $\la$ is empty then 
\[
\rk(\la)=\sum_{r=0}^{t-1} \rk(\lar{r}),
\]
since $\rk_0(\lar{r})=\rk(\lar{r})$.
An example of the computation of the Frobenius rank using the lemma 
is given in Figure~\ref{Fig_foranexample}.
\begin{figure}[htb]
\centering
\begin{tikzpicture}[scale=.4]
\node at (7,-5.5) {$\la$};
\node at (10,-5.5) {$\longmapsto$};
\node at (14,-5.5) {$\xcore{3}(\la)$};
\node at (18,-5.5) {$\lar{0}$};
\node at (20.5,-5.5) {$\lar{1}$};
\node at (23,-5.5) {$\lar{2}$};
\foreach \i [count=\ii] in {8,4,3,3,3,1,1}
\foreach \j in {1,...,\i}{\draw (\j,1-\ii) rectangle (\j+1,-\ii);}
\foreach \i [count=\ii] in {2}
\foreach \j in {1,...,\i}{\draw (12+\j,-1-\ii) rectangle (\j+13,-2-\ii);}
\foreach \i [count=\ii] in {1}
\foreach \j in {1,...,\i}{\draw (16+\j,-1-\ii) rectangle (\j+17,-2-\ii);}
\foreach \i [count=\ii] in {2}
\foreach \j in {1,...,\i}{\draw (18+\j,-1-\ii) rectangle (\j+19,-2-\ii);}
\foreach \i [count=\ii] in {2,2}
\foreach \j in {1,...,\i}{\draw (21+\j,-1-\ii) rectangle (\j+22,-2-\ii);}
\draw[fill=red,opacity=0.7] (1,0) rectangle (2,-1);
\draw[fill=red,opacity=0.7] (2,-1) rectangle (3,-2);
\draw[fill=red,opacity=0.7] (3,-2) rectangle (4,-3);
\draw[fill=red,opacity=0.7] (13,-2) rectangle (14,-3);
\draw[fill=red,opacity=0.7] (17,-2) rectangle (18,-3);
\draw[fill=red,opacity=0.7] (23,-2) rectangle (24,-3);
\end{tikzpicture}
\caption{The Littlewood decomposition of $\la=(8,4,3,3,3,1,1)$ with $t=3$
and $\kappa_3(\la)=(0,1,-1)$.
The marked cells explain the computation of the Frobenius rank:
the left- and right-hand sides both contain three shaded cells since
the first row of $\lar{1}$ and the first column of $\lar{2}$ are ignored.}
\label{Fig_foranexample}
\end{figure}
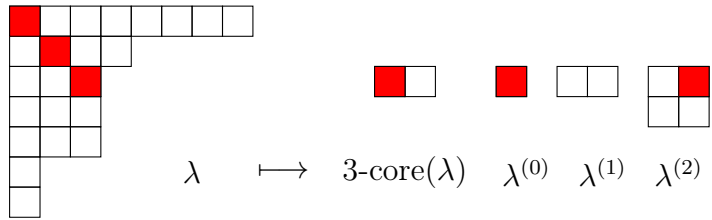

To conclude this section, we give an alternate characterisation of the sign
\eqref{Eq_sgn} in terms of certain permutations.
For a partition $\la$ and an integer $n$ such that $n\geq l(\la)$ we write
$\sigma_t(\la;n)$ for the permutation on $n$ letters which sorts the list
$(\la_1-1,\dots,\la_n-n)$ such that their residues modulo $t$ are increasing, 
and the elements within each residue class are decreasing.
For example if $t=3$, $\la=(6,5,5,1)$ and $n=6$ then the list is
$(5,3,2,-3,-5,-6)$. 
Our permutation is then $\sigma_3(\la;6)=246513$ in one-line notation.
Inversions in this permutation may be read off the $t$-Maya diagram.
They correspond to pairs of beads $(b_1,b_2)$ such that $b_2$ lies 
weakly to the right of and strictly above $b_1$. 
(Note that we only consider the first $n$ beads, read top-to-bottom and
right-to-left.)

In the following lemma we write $\sgn(w)$ for the sign of the permutation
$w$.

\begin{lemma}[{\cite[Lemma~4.5]{Albion23}}]\label{Lem_Sgn}
Let $\la/\mu$ be a $t$-tileable skew shape. Then for any integer 
$n\geq l(\la)$ we have 
\begin{equation}\label{Eq_SgnLem}
\sgn_t(\la/\mu)=\sgn(\sigma_t(\la;n))\sgn(\sigma_t(\mu;n)).
\end{equation}
\end{lemma}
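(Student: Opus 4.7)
The plan is to induct on $m = (\abs{\la}-\abs{\mu})/t$, the common number of $t$-ribbons in any tiling of $\la/\mu$. The base case $m = 0$ forces $\la = \mu$ and both sides of \eqref{Eq_SgnLem} equal $1$. For the step, fix a $t$-tiling of $\la/\mu$ and let $R$ be its last ribbon, so $\la/\nu = R$ for some intermediate $\mu \subseteq \nu \subseteq \la$ with $\nu/\mu$ still $t$-tileable in $m-1$ ribbons. Using the defining factorisation $\sgn_t(\la/\mu) = (-1)^{\mathrm{ht}(R)}\sgn_t(\nu/\mu)$ and applying the inductive hypothesis to $\nu/\mu$, the claim reduces to the single-ribbon statement
\[
\sgn(\sigma_t(\la;n))\sgn(\sigma_t(\nu;n)) = (-1)^{\mathrm{ht}(R)}.
\]

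To handle this I would use the inversion interpretation recalled just before the lemma: inversions of $\sigma_t(\cdot;n)$ correspond to pairs of beads $(b_1,b_2)$ in the $t$-Maya diagram, restricted to the first $n$ beads, with $b_2$ weakly right of and strictly above $b_1$. Adding $R$ to $\nu$ is the same as sliding one bead along some runner $r$ from column $p-1$ to column $p$. Writing
\[
A := \abs{\{\text{beads at }(r',p):r'<r\}}, \qquad
B := \abs{\{\text{beads at }(r',p-1):r'>r\}},
\]
only pairs involving the moving bead change inversion status: as $b_1$ the bead loses $B$ partners at column $p-1$ on higher runners, and as $b_2$ it gains $A$ partners at column $p$ on lower runners. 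Hence the inversion count changes by $A - B$, so $\sgn(\sigma_t(\la;n))\sgn(\sigma_t(\nu;n)) = (-1)^{A-B}$.

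What remains -- and is the main combinatorial obstacle -- is the identity $\mathrm{ht}(R) = A + B$. I would prove this by tracing how the $t-1$ positions jumped over by the moving bead (which shifts by $+t$ in the ordinary Maya diagram) distribute across the runners of the $t$-Maya diagram: the $t-1$ intermediate residues split into runners $s > r$, whose corresponding positions sit in column $p-1$, and runners $s < r$, whose positions sit in column $p$. Counting only those that actually carry beads yields $B + A$. Since $A + B$ and $A - B$ have the same parity, this closes the induction. A minor technicality is checking that the truncation to the first $n \geq l(\la)$ beads captures every relevant pair, which is immediate since each contributing bead lies in one of the first $l(\la)$ rows.
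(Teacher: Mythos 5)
Your proposal is correct and follows essentially the same route as the paper: induct on the number of ribbons, isolate the last ribbon, interpret its height and the change in permutation sign via bead movement in the $t$-Maya diagram, and observe they agree. The paper compresses your $(-1)^{A-B}$ versus $(-1)^{A+B}$ accounting into a single count $b$ of beads jumped over (asserting both the height and the permutation sign change equal $(-1)^b$), but the underlying argument is identical.
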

\begin{proof}
Let $\la/\mu$ have ribbon decomposition
\[
\mu=:\nur{0}\subseteq\nur{1}\subseteq\cdots\subseteq\nur{m-1}
\subseteq\nur{m}:=\la,
\]
where $\nur{r}/\nur{r-1}$ is a $t$-ribbon for each $1\leq r\leq m$.
The contribution of the ribbon $\la/\nur{m-1}$ to the sign on the left is
$(-1)^{\mathrm{ht}(\la/\nur{m-1})}$. If the removal of this ribbon corresponds
to moving a bead from position $x$ to $x-t$, then this sign is equal to 
$(-1)^b$ where $b=\abs{\beta(\la)\cap \{x-1,\dots,x-t+1\}}$ counts the number
of beads strictly between $x$ and $x-t$.
By the construction of the permutations $\sigma_t(\la;n)$
and $\sigma_t(\nur{m-1};n)$, we have 
$(-1)^b=\sgn(\sigma_t(\la;n))\sgn(\sigma_t(\nur{m-1};n))$.
In other words, upon removing a single ribbon, both the left- and right-hand
sides of \eqref{Eq_SgnLem} change by the same quantity.
Iterating this completes the proof.
\end{proof}

\section{Generalised universal characters}\label{Sec_Symm}

We now return to symmetric functions.
The first part of this section is devoted to the Verschiebung operator,
defined as the adjoint of the plethysm by a power sum symmetric function.
After briefly surveying its action on various classes of symmetric functions
we state our variants of this action on the universal characters.
This is followed by the main theorems, which compute the image of the 
general symmetric function $\cha_\la$ defined in the introduction.

\subsection{Symmetric functions and plethysm}

Here we give the basic facts relating to symmetric functions;
see \cite[Chapter~1]{Macdonald95} or \cite[Chapter~7]{Stanley99}.
We work in the \emf{algebra of symmetric functions} over $\mathbb{Q}$
and in a countable alphabet $X=(x_1,x_2,x_3,\ldots)$, denoted $\La$.
Important families of symmetric functions we require are the 
\emf{elementary symmetric functions} and the
\emf{complete homogeneous symmetric functions},
defined for integers $k\geq 0$ by
\[
e_k(X):=\sum_{1\leq i_1<\cdots<i_k}x_{i_1}\cdots x_{i_k}
\quad\text{and}\quad
h_k(X):=\sum_{1\leq i_1\leq\cdots\leq i_k}x_{i_1}\cdots x_{i_k},
\]
respectively.
As in the introduction we will drop the alphabet of variables and write 
$e_k$ and $h_k$ for the above.
These are extended to partitions by $h_\la:=h_{\la_1}h_{\la_2}h_{\la_3}\cdots$ 
and analogously for the $e_\la$ and $p_\la$.
The final ``obvious'' basis consists of the \emf{monomial symmetric functions}
\[
m_\la(X):=\sum_{\alpha}x_1^{\alpha_1}x_2^{\alpha_2}x_3^{\alpha_3}\cdots,
\]
where the sum is over all distinct permutations of the partition
$\la=(\la_1,\la_2,\la_3,\ldots)$.
Over $\mathbb{Q}$, all four of the above families form linear bases for
$\La$.

The most important family of symmetric functions are certainly the 
\emf{Schur functions} $s_\la$. The simplest way to define them at the 
generality of skew shapes is by the Jacobi--Trudi determinant
\begin{equation}\label{Eq_JT}
s_{\la/\mu}:=\det_{1\leq i,j\leq l(\la)}(h_{\la_i-\mu_j-i+j}),
\end{equation}
where $h_{-k}:=0$ for $k\geq 1$.
Similarly we have the dual Jacobi--Trudi formula
\[
s_{\la/\mu}=\det_{1\leq i,j\leq \la_1}(e_{\la_i'-\mu_j'-i+j}),
\]
and again $e_{-k}:=0$ for $k\geq 1$.
The symmetric functions $s_\la$ form a basis for $\La$ which is
orthonormal with respect to the Hall inner product \eqref{Eq_Hall-Def}.
Another way to define the skew Schur function is by the adjoint relation
\begin{equation}\label{Eq_SchurAdj}
\langle s_{\la/\mu}, f\rangle = \langle s_\la, s_\mu f\rangle.
\end{equation}
for any $f\in\La$. As already covered above, the 
\emf{Littlewood--Richardson coefficients} $c_{\mu\nu}^\la$ are the structure 
constants of the Schur basis: 
\[
s_\mu s_\nu = \sum_{\la} c_{\mu\nu}^\la s_\la.
\]
Combining this with \eqref{Eq_SchurAdj} in the case $f=s_\nu$ then gives
\[
s_{\la/\mu} = \sum_\nu c_{\mu \nu}^\la s_\nu.
\]
From these equations one sees that $c_{\mu\nu}^\la$ is symmetric in $\mu,\nu$ 
and will vanish unless $\mu,\nu\subseteq\la$ and $\abs{\la}=\abs{\mu}+\abs{\nu}$. These properties extend analogously to the multi-Littlewood--Richardson
coefficients.

We also have the following orthogonality relations among other symmetric 
functions
\begin{equation}\label{Eq_hmpOrth}
\langle h_\la, m_\mu\rangle = \delta_{\la\mu}
\quad\text{and}\quad \langle p_\la,p_\mu\rangle =z_\la\delta_{\la\mu},
\end{equation}
where $\delta_{\la\mu}$ is the usual Kronecker delta and 
$z_\la:=\prod_{i\geq 1}m_i(\la)! i^{m_i(\la)}$.
It is customary to define a homomorphism on symmetric functions
by $\omega h_k=e_k$, which is in fact an involution. 
One may show using the Jacobi--Trudi formulae that 
$\omega s_{\la/\mu}=s_{\la'/\mu'}$. Moreover, $\omega m_\la=f_\la$
where the $f_\la$ are the \emf{forgotten symmetric functions}, and
$\omega$ is an isometry.

\emf{Plethysm} is a composition of symmetric functions first 
introduced by Littlewood \cite{Littlewood36}; see also
\cite[p.~135]{Macdonald95}.
In the introduction we defined the plethysm by a power sum symmetric function
$p_t$, which raises each variable to the power of $t$.
Some properties of this plethysm are
$f\circ p_t = p_t \circ f$ and $p_s \circ p_t = p_{st}$ for $s,t\in\mathbb{N}$.
Moreover if $f$ is homogeneous of degree $n$ then
 \cite[Exercise~7.8]{Stanley99}
\begin{equation}\label{Eq_omegapt}
\omega (f\circ p_t) = (-1)^{n(t-1)}(\omega f)\circ p_t.
\end{equation}
Recall from earlier that the adjoint of this plethysm with respect to the 
Hall scalar product is denoted $\varphi_t$, the $t$-th Verschiebung operator.
We take the adjoint relation as defining this operator; the definition in
\eqref{Eq_premik} given at the very beginning will serve as a special 
case of the following.
The next proposition gives the action of this operator on most of the families
of symmetric functions we have seen so far.
We also provide short proofs of these claims, writing
$\la/t$ as short-hand for the partition $(\la_1/t,\la_2/t,\la_3/t,\ldots)$
when all parts of $\la$ are divisible by $t$.

\begin{prop}\label{Prop_hep}
Let  $t\geq2$ be an integer and $\la$ a partition. 
If $t$ does not divide each part of $\la$ then 
$\varphi_t h_\la=\varphi_t e_\la=\varphi_t p_\la=0$.
If it does, then
\begin{equation}
\varphi_t h_{\la} =h_{\la/t}, \qquad
\varphi_t e_{\la} =(-1)^{\abs{\la}(t-1)/t} e_{\la/t}
\quad\text{and}\quad
\varphi_t p_\la = t^{l(\la)} p_{\la/t}.
\end{equation}
\end{prop}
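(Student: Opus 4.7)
The plan is to treat the three families in order $p_\la,h_\la,e_\la$, using the adjoint relation \eqref{Eq_adjoint} to settle the power sum case directly, then promoting this to a multiplicativity statement that takes care of the $h$-case, and finally invoking the involution $\omega$ to handle the $e$-case. For $\varphi_t p_\la$, I would expand in the power sum basis using \eqref{Eq_hmpOrth}. Since plethysm satisfies $p_\mu\circ p_t=p_{t\mu}$ (a consequence of $p_k\circ p_t=p_{kt}$ and the multiplicativity of plethysm in the first argument), one has $\langle\varphi_t p_\la,p_\mu\rangle=\langle p_\la,p_{t\mu}\rangle=z_\la\delta_{\la,t\mu}$. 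Hence $\varphi_t p_\la$ vanishes unless every part of $\la$ is divisible by $t$, in which case it equals $(z_\la/z_{\la/t})p_{\la/t}$. The elementary identity $z_{t\nu}=t^{l(\nu)}z_\nu$, immediate from the product definition of $z_\nu$, supplies the factor $t^{l(\la)}$.

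Next I would verify that $\varphi_t$ is an algebra endomorphism of $\La$. This is a direct corollary of the power sum formula: both $\varphi_t(p_\la p_\mu)=\varphi_t p_{\la\cup\mu}$ and $\varphi_t p_\la\cdot\varphi_t p_\mu$ vanish together when some part fails to be divisible by $t$, and otherwise coincide since $l(\la\cup\mu)=l(\la)+l(\mu)$. With multiplicativity in hand it suffices to compute $\varphi_t h_k$. Expanding $h_k=\sum_{\mu\vdash k}p_\mu/z_\mu$ and applying the $p$-formula, the only surviving terms correspond to $\mu=t\nu$ for $\nu\vdash k/t$, where the factor $t^{l(\nu)}$ exactly cancels $z_{t\nu}/z_\nu$. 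This leaves $\sum_{\nu\vdash k/t}p_\nu/z_\nu=h_{k/t}$ when $t\mid k$ and $0$ otherwise, recovering \eqref{Eq_premik}; the statement for $\varphi_t h_\la$ then follows by multiplicativity.

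For $\varphi_t e_\la$ I would exploit $\omega h_k=e_k$ together with \eqref{Eq_omegapt}. Taking the Hall-adjoint of \eqref{Eq_omegapt} and using that $\omega$ is an isometry produces the commutation $\varphi_t\omega g=(-1)^{m(t-1)/t}\omega\varphi_t g$ for any $g$ homogeneous of degree $m$ divisible by $t$ (both sides vanish by degree considerations when $t\nmid m$). Setting $g=h_\la$, when every part of $\la$ is divisible by $t$ we obtain
\[
\varphi_t e_\la=\varphi_t\omega h_\la=(-1)^{\abs{\la}(t-1)/t}\omega\varphi_t h_\la=(-1)^{\abs{\la}(t-1)/t}e_{\la/t};
\]
otherwise $\varphi_t h_\la=0$ and the same commutation forces $\varphi_t e_\la=0$. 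The main subtlety is tracking the sign: one must carefully carry the factor $(-1)^{n(t-1)}$ from \eqref{Eq_omegapt} through the adjoint, and recognise that the argument of $\varphi_t$ has degree $nt$, so $n=\abs{\la}/t$, which correctly reproduces the exponent $\abs{\la}(t-1)/t$.
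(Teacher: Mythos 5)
Your proof is correct, and it takes a different route from the paper. The paper settles each of the three families $h_\la$, $e_\la$, $p_\la$ independently by pairing $\varphi_t$ with the dual bases $\{m_\mu\}$, $\{f_\mu\}$, $\{p_\mu\}$ under the Hall inner product and using $m_\mu\circ p_t=m_{t\mu}$, $\omega(f_\mu\circ p_t)=(-1)^{\abs{\mu}(t-1)}m_{t\mu}$ and $p_\mu\circ p_t=p_{t\mu}$ respectively; in particular it computes $\varphi_t h_\la$ for a general partition $\la$ in one stroke, with no appeal to multiplicativity. You instead establish the power sum case first, deduce from it (by checking on the basis $\{p_\la\}$) that $\varphi_t$ is an algebra endomorphism, reduce the $h$-case to $\varphi_t h_k$ via the expansion $h_k=\sum_{\mu\vdash k}p_\mu/z_\mu$ and the identity $z_{t\nu}=t^{l(\nu)}z_\nu$, and finally obtain the $e$-case from a commutation $\varphi_t\omega=(-1)^{m(t-1)/t}\omega\varphi_t$ on the graded piece of degree $m$ (your degree-tracking there is correct: $\varphi_t g$ has degree $m/t$, so the only $n$ contributing in $(-1)^{n(t-1)}$ is $n=m/t$). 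Your route makes the ring-homomorphism property of the Verschiebung operator explicit — a fact the paper silently builds into its opening definition \eqref{Eq_premik} and never re-derives from the adjoint characterisation — at the modest cost of an extra lemma; the paper's route is shorter because the pairing with $m_\mu$ handles arbitrary $h_\la$ at once without passing through $h_k$. Both are complete and sound.
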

\begin{proof}
Beginning with the complete homogeneous symmetric function case, it 
is clear from the definition of the $m_\mu$ that $m_\mu\circ p_t= m_{t\mu}$.
Therefore
\[
\langle \varphi_t h_\la, m_\mu\rangle =\langle h_\la,m_\mu \circ p_t\rangle
=\langle h_\la,m_{t\mu}\rangle = \delta_{\la, t\mu},
\]
where the last equality is an application of \eqref{Eq_hmpOrth}.
This implies that $\varphi_t h_\la=0$ if $t$ does not divide each part of
$\la$. 
If it does, then the above is equal to $\delta_{\la/t,\mu}$, which implies
that $\varphi_t h_\la = h_{\la/t}$. 

For the second case, note that since $\omega$ is an isometry
\[
\langle \varphi_t e_\la, f_\mu\rangle 
=\langle e_\la, f_\mu \circ p_t\rangle
=\langle h_\la, \omega(f_\mu\circ p_t)\rangle.
\]
By \eqref{Eq_omegapt} we now have 
$\omega(f_\mu\circ p_t)=(-1)^{\abs{\mu}(t-1)}m_{t\mu}$.
Therefore
\[
\langle \varphi_t e_\la, f_\mu\rangle 
=(-1)^{\abs{\mu}(t-1)}\langle h_\la, m_{t\mu}\rangle
=(-1)^{\abs{\mu}(t-1)}\delta_{\la,t\mu},
\]
again with the aid of \eqref{Eq_hmpOrth}.
Exactly as before this implies that $\varphi_t e_\la=0$ unless all parts of
$\la$ are divisible by $t$. If they are then 
$\varphi_t e_\la=(-1)^{\abs{\la}(t-1)/t}e_{\la/t}$, completing the proof of 
this case.

The power sum case is almost identical.
First we use \eqref{Eq_hmpOrth} to obtain
\[
\langle \varphi_t p_\la,p_\mu\rangle
=\langle p_\la,p_\mu\circ p_t\rangle
=\langle p_\la,p_{t\mu}\rangle
=z_{\la}\delta_{\la,t\mu}.
\]
This tells us that $\varphi_t p_\la$ vanishes unless all parts of $\la$ are
divisible by $t$.
Thus the power sum expansion of $\varphi_t p_\la$ has a single term 
with coefficient $z_{\la}/z_{\la/t}=t^{l(\la)}$.
\end{proof}

The actions of $\varphi_t$ presented in the previous proposition are all
rather simple, and follow the same pattern of dividing all parts of the 
partition by $t$ if possible.
A much richer structure underlies the action of the $t$-th Verschiebung 
operator on the (skew) Schur functions, utilising Littlewood's core and 
quotient construction.

\begin{theorem}\label{Thm_skewSchur}
For any integer $t\geq 2$ and skew shape $\la/\mu$ we have 
$\varphi_t s_{\la/\mu}=0$ unless $\la/\mu$ is $t$-tileable, in which
case
\[
\varphi_t s_{\la/\mu} = \sgn_t(\la/\mu)\prod_{r=0}^{t-1}s_{\lar{r}/\mur{r}},
\]
where the sign is defined in \eqref{Eq_sgn}.
\end{theorem}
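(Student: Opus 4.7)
The plan is to push $\varphi_t$ inside the Jacobi--Trudi determinant, and then, after sorting the rows and columns by residue, identify the resulting block-diagonal matrix with the product of Jacobi--Trudi matrices for the quotient pairs $\lar{r}/\mur{r}$. Because $\varphi_t$ is a ring endomorphism of $\La$ (the multiplicativity $\varphi_t h_\la = \prod_i \varphi_t h_{\la_i}$ is recorded in Proposition~\ref{Prop_hep}), for any $n \geq l(\la)$ the Jacobi--Trudi formula \eqref{Eq_JT} yields
\[
\varphi_t s_{\la/\mu}
= \det_{1 \leq i,j \leq n}\bigl(\varphi_t h_{\la_i - \mu_j - i + j}\bigr),
\]
and by Proposition~\ref{Prop_hep} the $(i,j)$-entry equals $h_{(\la_i - i - \mu_j + j)/t}$ when $\la_i - i \equiv \mu_j - j \Mod{t}$ and vanishes otherwise.

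Set $\ell_i := \la_i - i + n$ and $m_j := \mu_j - j + n$, so that the residue multisets $\{\ell_i \bmod t\}$ and $\{m_j \bmod t\}$ encode the runner occupancies of the $t$-Maya diagrams of $\la$ and $\mu$. A permutation-expansion summand for the above determinant survives only if the two multisets agree, which is equivalent to $\tcore(\la) = \tcore(\mu)$. When this fails, $\varphi_t s_{\la/\mu} = 0$, matching the fact that $\la/\mu$ cannot be $t$-tileable in that case.

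Assuming $\tcore(\la) = \tcore(\mu)$, permute the rows by $\sigma_t(\la;n)$ and the columns by $\sigma_t(\mu;n)$, the residue-sorting permutations of Lemma~\ref{Lem_Sgn}. The resulting matrix is block-diagonal with one block per residue $r \in \{0,\dots,t-1\}$, and translating between Maya diagram positions and Frobenius-type shifted parts identifies the $r$-th block (after dividing each index by $t$) with the Jacobi--Trudi matrix of $s_{\lar{r}/\mur{r}}$; if $\mur{r} \not\subseteq \lar{r}$ this factor vanishes by convention, which is exactly when $\la/\mu$ fails to be $t$-tileable despite having matching $t$-cores. The sign produced by the reorderings is $\sgn(\sigma_t(\la;n))\sgn(\sigma_t(\mu;n))$, which by Lemma~\ref{Lem_Sgn} equals $\sgn_t(\la/\mu)$ whenever $\la/\mu$ is $t$-tileable. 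Assembling the pieces gives the claimed factorisation.

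The main obstacle is the block identification: one must check that, once the rows and columns have been sorted by residue, the $\ell_i$ and $m_j$ lying in residue class $r$ align into precisely the shifted parts required by the Jacobi--Trudi matrix for $s_{\lar{r}/\mur{r}}$. This is a careful but routine translation between the arithmetic of Maya diagram positions and the extraction of the Littlewood quotient described in Section~\ref{Sec_prelims}.
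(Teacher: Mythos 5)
Your proof follows essentially the same route as the paper's: push $\varphi_t$ through the Jacobi--Trudi determinant, sort rows and columns by $\sigma_t(\la;n)$ and $\sigma_t(\mu;n)$ to obtain a block-diagonal matrix whose blocks are the Jacobi--Trudi matrices of the $s_{\lar{r}/\mur{r}}$, and appeal to Lemma~\ref{Lem_Sgn} to identify the resulting sign with $\sgn_t(\la/\mu)$. The only cosmetic difference is how vanishing is justified when $\tcore(\la)\neq\tcore(\mu)$ --- you argue via a permutation-expansion/residue-multiset count, whereas the paper observes that the sorted blocks then fail to be square --- but these are interchangeable and the remainder of the argument is identical.
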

For $\mu=\varnothing$ this reduces to Theorem~\ref{Thm_Littlewoodvarphi}
of the introduction.
As alluded to there, the skew case was first worked out by Farahat,
but only when $\mu=\tcore(\la)$ \cite{Farahat58}.
To our knowledge, the first statement of the full skew Schur case appears
in the second edition of Macdonald's book as an example 
\cite[p.~92]{Macdonald95}.
It then makes a further appearance in the work of Lascoux, Leclerc and Thibon
\cite[p.~1049]{LLT97}, which cites Kerber, S\"anger and Wagner
\cite{KSW81}.
However, the latter does not use Schur functions, and rather gives a new 
proof of Farahat's skew generalisation of Theorem~\ref{Thm_LittlewoodMult}
using ``Brettspiele'', which are essentially our Maya diagrams.
In none of these references is the vanishing described in terms of the
tileability of the skew shape $\la/\mu$, with this observation coming
from Evseev, Paget and Wildon \cite[Theorem~3.3]{EPW14} in the context of 
symmetric group characters (where the term $t$-decomposable is used rather 
than our $t$-tileable).
In the precise form above this appears in \cite[Theorem~3.1]{Albion23}.

In an effort to keep this paper for the most part self-contained we now 
provide a proof using Macdonald's approach, which is the same as that of
Farahat.

\begin{proof}[Proof of Theorem~\ref{Thm_skewSchur}]
The first step of the proof is clear: apply $\varphi_t$ to the Jacobi--Trudi
formula \eqref{Eq_JT} to obtain
\begin{equation}\label{Eq_varphiSchurProof}
\varphi_t s_{\la/\mu} = \det_{1\leq i,j\leq n}
\big(\varphi_t h_{\la_i-\mu_j-i+j}\big),
\end{equation}
where $n\geq l(\la)$ is a fixed integer. 
An entry $(i,j)$ in this new determinant is nonzero only if 
$\la_i-i\equiv\mu_j-j\Mod{t}$.
In order to group those entries within the same residue class, permute the
rows and columns according to the permutations $\sigma_t(\la;n)$ and
$\sigma_t(\mu;n)$.
The resulting determinant has a block-diagonal structure.
If $\kappa(\la)=(c_0,\dots,c_{t-1})$,  
$\kappa(\mu)=(d_0,\dots,d_{t-1})$ and $n=at+b$ ($0\leq b\leq t-1$), 
then the $r$th block along the main diagonal will have
dimensions $(c_r+a+[r\geq b]) \times (d_r+a+[r\geq b])$.
Here $[\cdot]$ denotes the \emf{Iverson bracket} which is equal to one if 
the statement $\cdot$ is true and zero otherwise.
These blocks will all be square if and only if $\kappa(\la)=\kappa(\mu)$,
i.e., unless $\tcore(\la)=\tcore(\mu)$, and thus the determinant necessarily
vanishes if this is not the case.
It follows from our definition of the $t$-quotient that after applying the 
Verschiebung operator the indices of the complete homogeneous
symmetric functions in the $r$th block along the diagonal are of the form
$h_{\lar{r}_i-\mur{r}_j-i+j}$ where $1\leq i\leq c_r+a+[r\geq b]$ and 
$1\leq j\leq d_r+a+[r\geq b]$.
Thus we have shown that, if $\tcore(\la)=\tcore(\mu)$, then
\[
\varphi_t s_{\la/\mu}=\sgn(\sigma_t(\la;n))\sgn(\sigma_t(\mu;n))
\prod_{r=0}^{t-1}s_{\lar{r}/\mur{r}}.
\]
This product of skew Schur functions will further vanish unless 
$\mur{r}\subseteq\lar{r}$ for each $0\leq r\leq t-1$.
Putting this together with the previous vanishing we determine that
$\varphi_t s_{\la/\mu}$ is zero unless $\la/\mu$ is $t$-tileable, in which 
case it is given by the above 
product. The sign is then equal to $\sgn_t(\la/\mu)$ by
Lemma~\ref{Lem_Sgn}.
\end{proof}

\subsection{Generalised universal characters}

For a finite set of $n$ variables the Schur polynomial $s_\la(x_1,\dots,x_n)$ 
is the character of the irreducible polynomial representation 
of $\mathrm{GL}_n$ indexed by $\la$.
The classical groups $\mathrm{O}_{2n}$, $\mathrm{Sp}_{2n}$
and $\mathrm{SO}_{2n+1}$ also carry irreducible representations 
indexed by partitions.
The characters of these representations are rather Laurent polynomials
symmetric under permutation and inversion of the $n$ variables.
Using the Jacobi--Trudi formulae for these characters,
originally due to Weyl, they may still be expressed as determinants in 
the complete homogeneous symmetric functions of the form
$h_r(x_1,1/x_1,\dots,x_n,1/x_n)$
\cite[Theorems~7.8.E \& 7.9.A]{Weyl39}.
Rather than working with these characters we will use the 
\emf{universal characters}, as defined by Koike and Terada \cite{Koike97,KT87}.
These are symmetric function lifts of the ordinary characters given 
by `forgetting' the variables in Weyl's Jacobi--Trudi formulae:
\begin{subequations}\label{Eq_UChJT}
\begin{align}
\sp_\la &:= \frac{1}{2}\det_{1\leq i,j\leq k}(h_{\la_i-i+j}+h_{\la_i-i-j+2})\\
\o_\la &:= \det_{1\leq i,j\leq k}(h_{\la_i-i+j}-h_{\la_i-i-j}) \\
\so_\la &:= \det_{1\leq i,j\leq k}(h_{\la_i-i+j}+ h_{\la_i-i-j+1}),
\end{align}
\end{subequations}
where $k$ is an integer such that $l(\la)\leq k$.
We also have the dual forms
\begin{subequations}\label{Eq_UCeJT}
\begin{align}
\sp_\la &= \det_{1\leq i,j\leq\ell}(e_{\la_i'-i+j}-e_{\la_i'-i-j}) \\
\o_\la &= \frac{1}{2}\det_{1\leq i,j\leq\ell}(e_{\la_i'-i+j}+e_{\la_i'-i-j+2})\\
\so_\la &= \det_{1\leq i,j\leq\ell}(e_{\la_i'-i+j}+ e_{\la_i'-i-j+1}),
\label{Eq_UCeJTso}
\end{align}
\end{subequations}
where here $\ell$ is an integer such that $\la_1\leq \ell$.
Comparing \eqref{Eq_UChJT} and \eqref{Eq_UCeJT} it is clear that
$\omega \o_\la=\sp_{\la'}$ and $\omega \so_\la=\so_{\la'}$.

Let $\La_n^{\mathrm{BC}}$ denote the ring of Laurent polynomials in 
$x_1,\dots,x_n$ which are symmetric under permutation and inversion of the
variables.
Define for integers $n\geq 1$ the restriction maps 
$\pi_n:\La\longrightarrow\La_n^{\mathrm{BC}}$ by 
$\pi_n(e_r)=e_r(x_1,1/x_1,\dots,x_n,1/x_n)$.
If $r>2n$ then $\pi_n(e_r)=0$, and moreover,
$\pi_n(e_r-e_{2n-r})=0$ for each $0\leq r\leq n$.
For a partition $\la$ with $l(\la)\leq n$ the images of the universal 
characters under $\pi_n$ are the actual characters of their respective groups
indexed by $\la$.
If $l(\la)>n$ then these specialisations either vanish or, up to a sign,
produce an irreducible character of the same group associated to a different 
partition which is determined by the so-called ``modification rules''; see 
\cite{King71} and \cite[\S2]{KT87}.
We also have the modified map $\pi_n'$ which acts by
$\pi_n'(e_r)=e_r(x_1,1/x_1,\dots,x_n,1/x_n,1)$ and satisfies
$\pi_n(\so_{\la})=\pi_n'(\o_\la)$ for $\la$ with $l(\la)\leq n$.

In the introduction we already met the character $\so_\la$
in \eqref{Eq_so-def} and saw that
it could be expanded as a signed sum over skew Schur functions where the inner
shape is a self-conjugate ($0$-asymmetric) partition.
In fact, all three of the characters \eqref{Eq_UChJT} admit such expressions:
\begin{subequations}\label{Eq_UCskew}
\begin{align}
\sp_\la&=\sum_{\mu\in\mathscr{P}_{-1}}(-1)^{\abs{\mu}/2}s_{\la/\mu},
\label{Eq_UCskewsp}\\
\o_\la&=\sum_{\mu\in\mathscr{P}_1}(-1)^{\abs{\mu}/2}s_{\la/\mu},
\label{Eq_UCskewo} \\
\so_\la&=\sum_{\mu\in\mathscr{P}_0}(-1)^{(\abs{\mu}-\rk(\mu))/2}s_{\la/\mu}.
\label{Eq_UCskewso}
\end{align}
\end{subequations}
The Schur functions themselves may be simply expanded in terms of these
universal characters:
\begin{equation}\label{Eq_CIR}
s_\la=
\sum_\mu\bigg(\sum_{\substack{\nu\\\text{$\nu$ even}}}c_{\mu\nu}^\la\bigg)\o_\mu
=
\sum_\mu\bigg(\sum_{\substack{\nu\\\text{$\nu'$ even}}}c_{\mu\nu}^\la\bigg)\sp_\mu
=\sum_\mu\bigg(\sum_{\nu}(-1)^{\abs{\nu}}c_{\mu\nu}^\la\bigg)\so_\la,
\end{equation}
where here we write ``$\nu$ even'' meaning $\nu$ has only even parts.
This last set of equalities are precisely the ``Character Interrelation 
Theorem'' of Koike and Terada; see \cite[Theorem~2.3.1]{KT87}
and \cite[Theorem~7.2]{Koike97}.

While these three are the most well-known universal characters, we need two
more. The first of these is the universal character associated with the 
negative part of the odd orthogonal group
\begin{equation}
\so_\la^-:=
\det_{1\leq i,j\leq k}\big(h_{\la_i-i+j}-h_{\la_i-i-j+1}\big)
=\sum_{\mu\in\mathscr{P}_0}(-1)^{(\abs{\mu}+\rk(\mu))/2}s_{\la/\mu}.
\end{equation}
There is also an $e$-Jacobi--Trudi formula where the sum is replaced by
a difference in \eqref{Eq_UCeJTso}.
Writing $-X:=(-x_1,-x_2,x_3,\dots)$ we further have
$\so_\la^-(X)=(-1)^{\abs{\la}}\so_\la^+(-X)$,
where, in order to avoid confusion, from now on we write $\so^+_\la$ in place
of $\so_\la$.

The next universal character we need is that of an irreducible 
rational representation of $\mathrm{GL}_n$;
see \cite{Koike89,Stembridge87}.
(The universal characters of the polynomial representations are the Schur
functions.)
These representations are indexed by weakly decreasing sequences of integers 
with length exactly $n$, or, alternatively, pairs of partitions
$\la,\mu$ such that $l(\la)+l(\mu)\leq n$.
Given such a pair we let the $i$-th component of the associated
$\mathrm{GL}_n$ weight $[\la,\mu]_n$ be given by
$\la_i-\mu_{n-i+1}$.
Recall that the Schur polynomial $s_\la(x_1,\dots,x_n)$ may be extended to
weakly decreasing sequences of integers of length $n$ by the relation
\begin{equation}
s_{(\la_1+1,\dots,\la_n+1)}(x_1,\dots,x_n)=(x_1\cdots x_n)
s_{(\la_1,\dots,\la_n)}(x_1,\dots,x_n).
\end{equation}
The character of an irreducible rational representation of 
$\mathrm{GL}_n$ is then simply 
$s_{[\la,\mu]_n}(x_1,\dots,x_n)$.
Littlewood gave the following expansion in terms of skew Schur polynomials
\cite{Littlewood44}:
\begin{equation}
s_{[\la,\mu]_n}(x_1,\dots,x_n)
=\sum_{\nu}(-1)^{\abs{\nu}}
s_{\la/\nu}(x_1,\dots,x_n)s_{\mu/\nu'}(1/x_1,\dots,1/x_n).
\end{equation}
Koike used this expression to define a universal character which depends on
two independent alphabets $X=(x_1,x_2,x_3,\dots)$ and 
$Y=(y_1,y_2,y_3,\dots)$ as
\cite{Koike89}\footnote{Our $\rs_{\la,\mu}$ stands for
``rational Schur function''.}
\begin{equation}\label{Eq_rsDef}
\rs_{\la,\mu}(X;Y):=
\sum_{\nu}(-1)^{\abs{\nu}}s_{\la/\nu}(X)s_{\mu/\nu'}(Y),
\end{equation}
which is an element of $\La_X\otimes \La_Y$.
Define the restriction map $\tilde\pi_n:\La_X\otimes\La_Y\longrightarrow
\La_n^\pm$, the space of symmetric Laurent polynomials in $x_1,\dots,x_n$,
by
\begin{equation}\label{Eq_tildepi}
\tilde\pi_n(\rs_{\la,\mu}(X;Y))=
\rs_{\la,\mu}(x_1,\dots,x_n; 1/x_1,\dots,1/x_n)=s_{[\la,\mu]}(x_1,\dots,x_n),
\end{equation}
for $l(\la)+l(\mu)\leq n$. Again, if this final condition is violated then
there are modification rules which allow for the specialisation to be 
expressed as the character of a different rational representation.
This object also has Jacobi--Trudi-type expressions.
In terms of the complete homogeneous symmetric functions the first of these is
\begin{equation}\label{Eq_rsh}
\rs_{\la,\mu}(X;Y)
=\det
\begin{pmatrix}
(h_{\la_i-i+j}(X))_{1\leq i,j\leq k} & 
(h_{\la_i-i-j+1}(X))_{\substack{1\leq i\leq k \\ 1\leq j\leq \ell}} \\
(h_{\mu_i-i-j+1}(Y))_{\substack{1\leq i\leq \ell \\ 1\leq j\leq k}} &
(h_{\mu_i-i+j}(Y))_{1\leq i,j\leq\ell}
\end{pmatrix},
\end{equation}
where $k\geq l(\la)$ and $\ell\geq l(\mu)$.
Again we have the dual form
\[
\rs_{\la,\mu}(X;Y)
=\det
\begin{pmatrix}
(e_{\la_i'-i+j}(X))_{1\leq i,j\leq k} & 
(e_{\la_i'-i-j+1}(X))_{\substack{1\leq i\leq k \\ 1\leq j\leq \ell}} \\
(e_{\mu_i'-i-j+1}(Y))_{\substack{1\leq i\leq \ell \\ 1\leq j\leq k}} &
(e_{\mu_i'-i+j}(Y))_{1\leq i,j\leq\ell}
\end{pmatrix},
\]
where now $k\geq \la_1$ and $\ell\geq\mu_1$.
Exactly how these determinantal representations of $\rs_{\la,\mu}(X;Y)$ and
the skew Schur expansion are related will be explained below.
In what follows we will predominantly use this symmetric function 
for $X=Y$, in which case we suppress the alphabet and simply write
$\rs_{\la,\mu}=\rs_{\la,\mu}(X;X)$.
We have already used this in Theorem~\ref{Thm_so} of the introduction.

In analysing Goulden's combinatorial proof of the Jacobi--Trudi formula
\cite{Goulden85},
Bressoud and Wei \cite{BW92} discovered a uniform extension of \eqref{Eq_UCskew}
involving an integer $z\geq -1$ which reproduces the above for
$z=-1,1,0$ respectively.
This was generalised further by Hamel and King to an expression 
valid for all $z\in\mathbb{Z}$ and including an additional parameter $q$
\cite{HK11a,HK11b}.
Then the main result of Hamel and King is
\begin{subequations}\label{Eq_chiz}
\begin{align}
\cha_{\la}(z;q)&:=\det_{1\leq i,j\leq k}
\big(h_{\la_i-i+j}+[j>-z]qh_{\la_i-i-j+1-z}\big) \label{Eq_chizDet}\\[2mm]
&\hphantom{:}=\sum_{\mu\in\mathscr{P}_z}
(-1)^{(\abs{\mu}-\rk(\mu)(z+1))/2}q^{\rk(\mu)}s_{\la/\mu},\label{Eq_chizSkew}
\end{align}
\end{subequations}
where $k$ is an integer such that $k\geq l(\la)$ and we have used the 
Iverson bracket from the proof of Theorem~\ref{Thm_skewSchur}.
Their paper \cite{HK11a} provides a proof of the identity 
$\eqref{Eq_chizDet}=\eqref{Eq_chizSkew}$ using the Laplace expansion of the
determinant, whereas in \cite{HK11b} a combinatorial proof is provided
using the Lindstr\"om--Gessel--Viennot lemma \cite{GV89}.
The general symmetric function $\cha(z;q)$ also reduces to the three 
classical cases, but in a slightly different manner to the determinant
of Bressoud and Wei:
\[
\sp_{\la}=\cha_\la(-1;1), \quad
\o_{\la}=\cha_\la(1;-1),\quad \text{and}\quad
\so_{\la}^\pm=\cha_\la(0;\pm1).
\]
The expansion in terms of skew Schur functions immediately implies the 
following duality with respect to the involution $\omega$:
\begin{equation}\label{Eq_chiDual}
\omega\cha_{\la}(z;q)=\cha_{\la'}(-z;(-1)^zq).
\end{equation}
This extends $\omega\o_\la=\sp_{\la'}$. 

The symmetric function $\cha_\la(z;q)$ is the subject of the first main result
of Hamel and King in \cite{HK11a,HK11b}. They also introduce a generalisation
of the determinantal form of $\rs_{\la,\mu}(X;Y)$ \eqref{Eq_rsh} in a similar
vein, involving two parameters $u,v$ and a pair of (possibly negative) 
integers $a,b$.
We express this as
\begin{align}\label{Eq_rsHKab}
&\rs_{\la,\mu}(X;Y;a,b;u,v)\\
&\quad:=\det
\begin{pmatrix}
\big(h_{\la_i-i+j}(X)\big)_{1\leq i,j\leq k} &
\big([j>-a]uh_{\la_i-i-j-a+1}(X)
\big)_{\substack{1\leq i\leq k\\ 1\leq j\leq\ell}} \\
\big([j>-b]vh_{\mu_i-i-j-b+1}(Y)\big)_{\substack{1\leq i\leq\ell\\1\leq j\leq k}}
& \big(h_{\mu_i-i+j}\big)_{1\leq i,j\leq \ell}.\notag
\end{pmatrix},
\end{align}
where as usual $k\geq l(\la)$ and $\ell\geq l(\mu)$ are integers.
For $(a,b,u,v)=(0,0,1,1)$ we recover Koike's rational universal character.
Observe that the structure of this determinant, including Iverson brackets, is 
clearly similar to that of $\cha_\la(z;q)$.
Since the determinant is quite complicated, let us give an example
for $(\la,\mu,a,b,k,\ell)=((3,2),(4,2,1,1),-1,2,2,4)$:
\begin{equation}\label{Eq_exampleDet}
\det
\begin{pmatrix}
h_3(X) & h_2(X) & 0 & uh_2(X) & uh_1(X) & u \\
h_1(X) & h_2(X) & 0 & u & 0 & 0 \\
vh_1(Y) & v & h_4(Y) & h_5(Y) & h_6(Y) & h_7(Y) \\
0 & 0 & h_1(Y) & h_2(Y) & h_3(Y) & h_4(Y) \\
0 & 0 & 0 & 1 & h_1(Y) & h_2(Y) \\
0 & 0 & 0 & 0 & 1 & h_1(Y)
\end{pmatrix}.
\end{equation}
Using both algebraic and lattice path techniques, Hamel and King show that this
more general symmetric function expands nicely in terms of skew Schur functions
\cite[Theorem~2]{HK11a}
\[
\rs_{\la,\mu}(X;Y;a,b;u,v)
=\sum_{\nu}(-1)^{\abs{\nu}}(uv)^{\rk(\nu)}
s_{\la/(\nu+a^{\rk(\nu)})}(X)s_{\mu/(\nu'+b^{\rk(\nu)})}(Y),
\]
where the sum is over all partitions $\nu=(a_1,\dots,a_k\mid b_1,\dots,b_k)$
of arbitrary Frobenius rank such that $a_r\geq \max\{0,-a\}$ and
$b_r\geq \max\{0,-b\}$ for $1\leq r\leq k=\rk(\nu)$.
For example, in computing $\rs_{(3,2),(4,2,1,1)}(X;Y;-1,2;u,v)$ from
\eqref{Eq_exampleDet} the term $\nu=(1)$ is excluded from the sum since
$(1)=(0\mid0)$ in Frobenius notation.
Intuitively, this ensures that the Frobenius rank of the partition
$\nu+(a^{\rk(\nu)})$ is never less than the Frobenius rank of $\nu$.

A variant of the Koike and Hamel--King determinants 
involving an additional positive integer $c$ occurs naturally in our factorisation 
results for universal characters below. 
Here we write $[k,\ell]:=(k+1,\dots,\ell)$, which we treat as empty for
$k\geq \ell$.
The modified Hamel--King determinant is defined by the identity
\begin{align*}
&u^{c}(-1)^{kc+\binom{c}{2}}\rs_{\la,\mu}(X;Y;a,b;c;u,v)\\
&=
\!\det\begin{pmatrix}
(h_{\la_i-i+j}(X))_{\substack{i\in[0,k]\\j\in[c,k]}} & \!\!\!\!\!
\big([j>-a-c]uh_{\la_i-i-j-a+1}(X)\big)_{\substack{i\in[0,\max\{k,c\}]\\j\in [-c,\ell]}} \\
\big([j>-b]vh_{\mu_i-i-j+1-b}(Y)\big)_{\substack{i\in[0,\ell]\\ j\in[c,k]}} & \!\!\!\!\!
(h_{\mu_i-i+j}(Y))_{\substack{i\in[0,\ell]\\j\in[-c,\ell]}}
\end{pmatrix}
\end{align*}
where $k\geq l(\la)$ and $\ell\geq l(\mu)$.
For $c=0$ this reduces to the Hamel--King determinant \eqref{Eq_rsHKab}.
While not entirely clear from the definition, this determinant 
does not depend on $k$ or $\ell$ as long the length conditions hold.
In the case that $c\geq k$ the two sub-matrices on the left do not appear due to
having no valid column indices.
Like Koike's character, this also has an expansion in terms of skew Schur 
functions. Recall from Theorem~\ref{Thm_zAsym} 
that $\rk_c(\la)$ denotes the Frobenius rank of the
partition obtained by removing the first $c$ rows of $\la$.

\begin{theorem}
For partitions $\la,\mu$, and integers $a,b,c$ such that
$c\geq 0$ we have
\[
\rs_{\la,\mu}(X;Y;a,b;c;u,v)
=\sum_{\nu}(-1)^{\abs{\nu}}(uv)^{\rk_c(\nu)}
s_{\la/(\nu+(a^{c+\rk_c(\nu)}))}(X)s_{\mu/(\nu'+(b^{\rk_c(\nu)}))}(Y),
\]
where the sum is over all partitions $\nu$ for which 
$\rk_c(\nu)=\rk_c(\nu+(a^{c+\rk_c(\nu)}))$.
\end{theorem}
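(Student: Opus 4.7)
The plan is to adapt the Laplace-expansion proof Hamel and King give for the $c=0$ case in \cite{HK11a}. I would expand the block determinant along its first $\max\{k,c\}$ rows, producing a signed sum over subsets $S$ of size $\max\{k,c\}$ of the column index set; each summand is a product of a top minor and a complementary bottom minor, weighted by the Laplace sign. After reordering columns into the natural Jacobi--Trudi order for each alphabet, the top minor becomes $u^{m(S)}s_{\la/\alpha}(X)$ for a partition $\alpha$ depending on $S$, and the complementary bottom minor becomes $v^{n(S)}s_{\mu/\beta}(Y)$ for a partition $\beta$. The Iverson brackets $[j>-a-c]$ and $[j>-b]$ guarantee that only those subsets yielding genuine Jacobi--Trudi matrices (hence nonzero skew Schur functions) contribute.

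The core combinatorial step is to parameterise the pairs $(\alpha,\beta)$ by a single partition $\nu$. In the unmodified $c=0$ case this amounts to recording $\nu$ in Frobenius notation $(a_1,\dots,a_r\mid b_1,\dots,b_r)$, where the $a_i$ are read off from the right-block columns chosen for the top minor and the $b_i$ from the left-block columns chosen for the bottom minor. In our modified setting, the $c$ extra right-block columns at positions $j\in\{-c+1,\dots,0\}$ must \emph{always} be chosen for the top minor, since otherwise the bottom minor vanishes by the Iverson bracket in the bottom-left block; symmetrically the $c$ left-block columns with indices in $\{1,\dots,c\}$ are simply unavailable to the bottom minor. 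Consequently the first $c$ principal hooks of $\nu$ are forced, and the genuinely free combinatorial data is captured by the $c$-shifted Frobenius rank $\rk_c(\nu)$. This accounts for the exponents $c+\rk_c(\nu)$ and $\rk_c(\nu)$ appearing in the column shifts $(a^{c+\rk_c(\nu)})$ and $(b^{\rk_c(\nu)})$.

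The main obstacles are sign bookkeeping and the constraint on $\nu$. The prefactor $u^c(-1)^{kc+\binom{c}{2}}$ on the left-hand side of the definition is calibrated precisely to absorb the column-reordering sign produced by moving the $c$ forced right-block columns past the $k-c$ ordinary left-block columns, together with the $c$ factors of $u$ extracted from them. The restriction $\rk_c(\nu)=\rk_c(\nu+(a^{c+\rk_c(\nu)}))$ plays the same role that the bound $a_r\geq\max\{0,-a\}$ plays in the original Hamel--King expansion: it is exactly the condition ensuring that the $a$-translation of the chosen right-block columns, applied on top of the $c$ forced columns, does not introduce a cell beyond row $c+\rk_c(\nu)$ that would spoil the Frobenius structure. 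Finally, the edge case $c\geq k$, in which the left sub-matrices disappear entirely, follows as the limiting case of this argument with empty left blocks; the remainder is a direct, though notationally intricate, generalisation of the argument of \cite{HK11a}.
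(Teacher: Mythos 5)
Your overall plan is the same as the paper's: Laplace-expand the block determinant along the top block of rows, identify the coset representatives with partitions $\nu \subseteq (k^\ell)$ (with $\sgn(w) = (-1)^{\abs{\nu}}$), and read off the skew Schur factors, $u,v$ exponents and sign prefactor from the resulting minors. That is precisely how the paper proceeds, citing Koike and Hamel--King for the technique.

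The weak point is the forcing argument you use to explain the appearance of $\rk_c(\nu)$. You claim that the $c$ extra right-block columns at $j\in\{-c+1,\dots,0\}$ must always be chosen for the top minor "since otherwise the bottom minor vanishes by the Iverson bracket in the bottom-left block," and hence that "the first $c$ principal hooks of $\nu$ are forced." Both claims are false. The Iverson bracket $[j>-b]$ in the bottom-left block is attached to the columns with $j\in\{c+1,\dots,k\}$, not to $j\in\{-c+1,\dots,0\}$, so it cannot force those columns anywhere; and for a concrete counterexample, take $c=2$, $\nu=(2,2)$: the coset representative $w$ given by $w(i)=i-\nu_i$ for $i\geq 1$ and $w(i)=\nu'_{1-i}+i$ for $i\leq 0$ assigns the bottom minor exactly the two relabeled columns $1,2$ (i.e.\ the "extra" right-block columns), contradicting your forcing claim. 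What actually holds --- and what the paper computes directly from this bijection --- is the numerical identity $r-c=s=\rk_c(\nu)$, where $r$ (resp.\ $s$) is the number of right-block columns in the top minor (resp.\ left-block columns in the bottom minor): one has $r\geq c$ because $w(j)=j-\nu_j\leq c$ is automatic for $j\leq c$, and the excess $r-c$ counts the rows $j>c$ with $\nu_j\geq j-c$, which is $\rk_c(\nu)$, with the dual calculation giving $s=\rk_c(\nu)$. You should replace the forcing heuristic with this direct count from the coset-representative formula; once that is done, the rest of your outline (Iverson brackets governing which $\nu$ contribute, the prefactor $u^c(-1)^{kc+\binom{c}{2}}$ absorbing the column reordering) proceeds as you describe.
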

\begin{proof}
The technique is the same as in \cite[p.~68]{Koike89} and \cite[p.~553]{HK11a}.
Without loss of generality assume that $k\geq c$.
We apply the Laplace expansion to the determinantal form of
$\rs_{\la,\mu}(X;Y;a,b;c;u,v)$ according to the given block structure,
choosing the first $k$ rows to be fixed.
We index the sum by permutations 
$w\in \mathfrak{S}_{k+\ell}/(\mathfrak{S}_k\times\mathfrak{S}_\ell)$
acting on the set $\{c+1,\dots,k\}\cup\{c,\dots,1-\ell\}$.
(In other words, the first $k$ columns are labelled $c+1,\dots, k$ and the final
$\ell$ columns $c,\dots, 1-\ell$.)
Define the sets $K_w:=\{w(j): 1\leq j\leq k\}$ and
$L_w:=\{w(j):1-\ell\leq j\leq 0\}$.
Then the Laplace expansion of $\rs_{\la,\mu}(X;Y;a,b;c;u,v)$ may be expressed as
\begin{align*}
(-1)^{kc+\binom{c}{2}}
\sum_{w\in\mathfrak{S}_{k+\ell}/(\mathfrak{S}_k\times \mathfrak{S}_\ell)}
\sgn(w)u^{r-c} v^s
\det_{\substack{1\leq i\leq k\\j\in K_w}}(\alpha_jh_{\la_i-i+p_j}(X))
\det_{\substack{1\leq i\leq \ell\\j\in L_w}}(\beta_jh_{\mu_i-i-q_j+1}(Y)),
\end{align*}
where we set
\begin{equation*}
\alpha_j:=\begin{cases} 
1 & \text{if $c+1\leq j\leq k$}, \\
0 & \text{if $c-a+1\leq j\leq c$}, \\
1 & \text{if $1-\ell\leq j\leq c-a$},
\end{cases}
\quad\text{and}\quad
\beta_j
:=\begin{cases}
0 & \text{if $c+1\leq j\leq c-b-1$}, \\
1 & \text{if $c-b\leq j\leq k$}, \\
1 & \text{if $1-\ell\leq j\leq c$},
\end{cases}
\end{equation*}
which encode the Iverson brackets from the full determinant,
\begin{equation}\label{Eq_detProofpq}
p_j:=\begin{cases}
j & \text{if $c+1\leq j\leq k$}, \\
j-a & \text{if $1-\ell\leq j\leq c$},
\end{cases}
\quad\text{and}\quad
q_j:=\begin{cases}
j+b &\text{if $c+1\leq j\leq k$},\\
j &\text{if $1-\ell\leq j\leq c$}. 
\end{cases}
\end{equation}
We also have the quantities
$r=\{1\leq j \leq k:1-\ell\leq w(j)\leq c\}$ and
$s=\{1-\ell\leq j\leq c : c+1\leq w(j)\leq k\}$.

As a next step we reverse the order of the columns labelled $c,\dots,1$ and
then move them to the left, which cancels the overall sign from the 
determinant defining our symmetric function.
Treating the $w$ as permutations of the set 
$\{1,\dots,k\}\cup\{0,\dots,1-\ell\}$ 
we then choose coset representatives such that 
$w(1)<\cdots<w(k)$ and $w(-1)>\cdots>w(1-\ell)$ ordered 
canonically as integers.
For example, in two-line notation with $(k,\ell)=(3,2)$ one such coset 
representative is
\[
\begin{pmatrix}
1 & 2 & 3 & 0 & -1 \\
-1 & 2 & 3 & 1 & 0 
\end{pmatrix}.
\]
The coset representatives of 
$\mathfrak{S}_{k+\ell}/(\mathfrak{S}_k\times\mathfrak{S}_\ell)$ and
partitions $\nu\subseteq(k^\ell)$ are in bijection; 
see \cite[p.~3]{Macdonald95} or \cite[p.~553]{HK11a}.
The assignment $w(i)=i-\nu_i$ if $1\leq i\leq k$ and
$w(i)=\nu_{1-i}'+i$ for $1-\ell\leq i\leq 0$ gives the corresponding partition.
In the above example we obtain $\nu=(2)$, and the sign of the 
permutation will be equal to $=(-1)^{\abs{\nu}}$.
Moreover, $r-c=s=\rk_c(\nu)$.
To complete the proof we need only observe that the definitions of
$p_j$ and $q_j$ from \eqref{Eq_detProofpq} imply that the terms in the sum 
which are nonzero come from partitions $\nu$ for which
$\nu+(a^{c+\rk_c(\nu)})\subseteq\la$ and
$\nu'+(b^{\rk_c(\nu)})\subseteq\mu$.
We must also have that $\rk_c(\nu)=\rk_c(\nu+(a^{c+\rk_c(\nu)}))$.
This completes the proof.
\end{proof}

\subsection{Restriction rules}
Later on we will require some general restriction rules due to Koike and Terada.
The purpose of this subsection is to collect these results.
The first of these rules gives the restriction of an irreducible $\mathrm{GL}_n$
character to any subgroup of the form $\mathrm{GL}_k\times \mathrm{GL}_{n-k}$
where $0\leq k\leq n$.
This uses the restriction homomorphism $\tilde\pi_n$ defined in
\eqref{Eq_tildepi}.

\begin{theorem}[{\cite[Proposition~2.6]{Koike89}}]\label{Thm_GLGLGL}
Let $\la,\mu$ be partitions such that $l(\la)+l(\mu)\leq n$.
Then for any integer $k$ such that $0\leq k\leq n$,
\begin{equation}\label{Eq_GLGLGL}
\tilde\pi_n(\rs_{\la,\mu}(X;Y))
=\sum_{\nu,\xi,\rho,\tau}\bigg(\sum_\eta c_{\nu,\rho,\eta}^\la c_{\xi,\tau,\eta}^\mu\bigg)
\tilde\pi_k(\rs_{\nu,\xi}(X;Y))\tilde\pi_{n-k}(\rs_{\rho,\tau}(X;Y)).
\end{equation}
\end{theorem}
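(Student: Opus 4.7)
The plan is to first establish the formal identity
\begin{equation*}
\rs_{\la,\mu}(X_1\cup X_2;Y_1\cup Y_2)
=\sum_{\nu,\xi,\rho,\tau}\bigg(\sum_\eta c_{\nu,\rho,\eta}^\la\, c_{\xi,\tau,\eta}^\mu\bigg)
\rs_{\nu,\xi}(X_1;Y_1)\,\rs_{\rho,\tau}(X_2;Y_2)
\end{equation*}
in four independent alphabets, after which \eqref{Eq_GLGLGL} follows by applying $\tilde\pi_k\otimes\tilde\pi_{n-k}$, namely by setting $X_1=(x_1,\dots,x_k)$, $X_2=(x_{k+1},\dots,x_n)$ and $Y_i=X_i^{-1}$; the length condition $l(\la)+l(\mu)\le n$ ensures that no modification rules intervene on either factor.

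To prove the formal identity I would pass through a Cauchy-type generating function with two further auxiliary alphabets $A,B$. Starting from the skew Schur expansion \eqref{Eq_rsDef} and using the standard Cauchy identity $\sum_\la s_{\la/\sigma}(X)s_\la(A)=s_\sigma(A)\prod_{i,j}(1-x_ia_j)^{-1}$ in both $X$ and $Y$, and then collapsing the residual $\sigma$-sum by the dual Cauchy identity, one obtains
\begin{equation*}
F(X;Y):=\sum_{\la,\mu}\rs_{\la,\mu}(X;Y)\,s_\la(A)\,s_\mu(B)
=\frac{\prod_{i,j}(1-a_ib_j)}{\prod_{i,j}(1-x_ia_j)\prod_{i,j}(1-y_ib_j)}.
\end{equation*}
A direct manipulation of the right-hand side under $X\mapsto X_1\cup X_2$, $Y\mapsto Y_1\cup Y_2$ shows
\begin{equation*}
F(X_1\cup X_2;Y_1\cup Y_2)=\frac{F(X_1;Y_1)\cdot F(X_2;Y_2)}{\prod_{i,j}(1-a_ib_j)}.
\end{equation*}
Substituting the ordinary Cauchy expansion $\prod_{i,j}(1-a_ib_j)^{-1}=\sum_\eta s_\eta(A)s_\eta(B)$, expanding the resulting triple products via $s_\nu(A)s_\rho(A)s_\eta(A)=\sum_\la c_{\nu,\rho,\eta}^\la s_\la(A)$ and its $B$-analogue, and finally equating coefficients of $s_\la(A)s_\mu(B)$ on both sides delivers the desired formal identity.

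The main obstacle is bookkeeping with the six alphabets and verifying that the interplay between the two copies of the dual Cauchy factor $\prod_{i,j}(1-a_ib_j)$ (one from each $F$) and the single factor $\prod_{i,j}(1-a_ib_j)^{-1}$ needed to recover $F(X_1\cup X_2;Y_1\cup Y_2)$ produces the symmetric coefficient $\sum_\eta c_{\nu,\rho,\eta}^\la c_{\xi,\tau,\eta}^\mu$ with no transpose on $\eta$. This is what makes the ordinary (rather than dual) Cauchy identity the right tool at the final step; the rest of the calculation is essentially mechanical, and a small test such as $\la=\mu=(1)$ with $n=2$, $k=1$ serves as a quick consistency check.
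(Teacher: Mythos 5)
Your proof is correct and cleanly organised. Note, however, that the paper does not prove Theorem~\ref{Thm_GLGLGL} at all: it is cited verbatim as Koike's Proposition~2.6 (as are the other two restriction rules in that subsection), so there is no in-text argument to compare against. Your Cauchy generating-function derivation is therefore a valid, self-contained proof of the cited result, and the mechanism is exactly the expected one. The generating function $F(X;Y)=\sum_{\la,\mu}\rs_{\la,\mu}(X;Y)s_\la(A)s_\mu(B)=\prod_{i,j}(1-a_ib_j)\cdot\prod_{i,j}(1-x_ia_j)^{-1}\prod_{i,j}(1-y_ib_j)^{-1}$ does follow from \eqref{Eq_rsDef} together with the skew Cauchy identity (applied in $X$ and $Y$) and the dual Cauchy identity with the sign absorbed into $A\mapsto -A$; the multiplicativity $F(X_1\cup X_2;Y_1\cup Y_2)=F(X_1;Y_1)F(X_2;Y_2)\prod_{i,j}(1-a_ib_j)^{-1}$ is immediate from the product form; and re-expanding the extra factor by the ordinary Cauchy identity and extracting coefficients of $s_\la(A)s_\mu(B)$ yields precisely the coefficient $\sum_\eta c^\la_{\nu,\rho,\eta}c^\mu_{\xi,\tau,\eta}$. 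Your sanity check with $\la=\mu=(1)$, $n=2$, $k=1$ is also correct: both sides equal $1+x_1/x_2+x_2/x_1$. One small caveat worth making explicit when you write this up: the formal four-alphabet identity is an identity in $\La_{X_1}\otimes\La_{Y_1}\otimes\La_{X_2}\otimes\La_{Y_2}$, and the passage to \eqref{Eq_GLGLGL} is the ring homomorphism that specialises each alphabet to finitely many (possibly inverted) variables; the sums on both sides are finite for fixed $\la,\mu$ because $c^\la_{\nu,\rho,\eta}$ vanishes unless $|\nu|+|\rho|+|\eta|=|\la|$, so the specialisation is unproblematic and the hypothesis $l(\la)+l(\mu)\le n$ is only used to identify $\tilde\pi_n(\rs_{\la,\mu}(X;Y))$ with the genuine irreducible rational character $s_{[\la,\mu]}$ on the left.
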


The next result gives the restriction of $\mathrm{SO}_{2n+1}$ to a 
maximal parabolic subgroup $\mathrm{GL}_k\times \mathrm{SO}_{2(n-k)+1}$.
We write $\pi_{n;Z}$ and $\tilde\pi_{n;X,Y}$ for the restriction maps acting
on the labelled sets of variables.

\begin{theorem}[{\cite[Theorem~2.1]{KT90}}]\label{Thm_SOGLSO}
For any partition $\la$ and integers $k,n$ such that $0\leq k\leq n$, then
\begin{equation}\label{Eq_SOGLSO}
\tilde\pi_{k;X,Y}\pi_{n-k;Z}(\so_\la^+(X,Y,Z))
=\sum_{\mu,\nu,\xi} \bigg(\sum_\eta c^\la_{\mu,\nu,\xi,\eta,\eta}\bigg)
\tilde\pi_{k}(\rs_{\mu,\nu}(X;Y))\pi_{n-k}(\so^+_\xi).
\end{equation}
\end{theorem}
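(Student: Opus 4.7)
The plan is to establish the unrestricted symmetric-function identity
\[
\so^+_\la(X,Y,Z) = \sum_{\mu,\nu,\xi}\bigg(\sum_\eta c^\la_{\mu,\nu,\xi,\eta,\eta}\bigg) \rs_{\mu,\nu}(X;Y)\, \so^+_\xi(Z)
\]
in $\La_X \otimes \La_Y \otimes \La_Z$; equation \eqref{Eq_SOGLSO} then follows by applying the algebra homomorphisms $\tilde\pi_{k;X,Y}$ and $\pi_{n-k;Z}$. I will compare coefficients of $s_\alpha(X) s_\beta(Y) s_\gamma(Z)$ on both sides using the Hall pairing.

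For the left-hand side, apply \eqref{Eq_UCskewso} together with the standard three-alphabet comultiplication $s_{\la/\theta}(X,Y,Z) = \sum_{\alpha,\beta,\gamma} c^\la_{\theta,\alpha,\beta,\gamma}\, s_\alpha(X) s_\beta(Y) s_\gamma(Z)$. Setting
\[
f := \sum_{\theta \in \mathscr{P}_0}(-1)^{(\abs{\theta}-\rk(\theta))/2} s_\theta
\]
and using $c^\la_{\theta,\alpha,\beta,\gamma} = \langle s_\la, s_\theta s_\alpha s_\beta s_\gamma\rangle$, the coefficient of $s_\alpha(X) s_\beta(Y) s_\gamma(Z)$ on the left reads $\langle s_\la, s_\alpha s_\beta s_\gamma\, f\rangle$.

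For the right-hand side, expand each $\rs_{\mu,\nu}(X;Y)$ via \eqref{Eq_rsDef} and each $\so^+_\xi(Z)$ via \eqref{Eq_UCskewso}, distribute skew Schurs by Littlewood--Richardson, and collapse the sums over $\mu,\nu,\xi$ using $\sum_\mu c^\mu_{\rho,\alpha} s_\mu = s_\rho s_\alpha$ (and its analogues) together with $c^\la_{\mu,\nu,\xi,\eta,\eta} = \langle s_\la, s_\mu s_\nu s_\xi s_\eta^2\rangle$. Defining
\[
g := \sum_\rho (-1)^{\abs{\rho}} s_\rho s_{\rho'}, \qquad h := \sum_\eta s_\eta^2,
\]
the coefficient of $s_\alpha(X) s_\beta(Y) s_\gamma(Z)$ on the right equals $\langle s_\la, s_\alpha s_\beta s_\gamma\, g f h\rangle$. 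Matching coefficients for every $\la,\alpha,\beta,\gamma$ reduces the theorem to the universal identity $f = g f h$ in $\La$, equivalently (since $\La$ is an integral domain and $f \neq 0$) to $g h = 1$. This last equality is immediate from the Cauchy identity $\sum_\eta s_\eta(X)^2 = \prod_{i,j}(1-x_ix_j)^{-1}$ and its dual-Cauchy counterpart $\sum_\rho (-1)^{\abs{\rho}} s_\rho(X) s_{\rho'}(X) = \prod_{i,j}(1-x_ix_j)$, obtained by setting $Y = -X$ in $\sum_\rho s_\rho(X) s_{\rho'}(Y) = \prod_{i,j}(1+x_iy_j)$ and using $s_\rho(-X) = (-1)^{\abs{\rho}} s_\rho(X)$; these two expressions are mutual reciprocals.

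The main obstacle is the bookkeeping in the right-hand-side expansion: one must keep careful track of all skew Schur and LR expansions of $\rs_{\mu,\nu}(X;Y)\,\so^+_\xi(Z)$, and then reorganise the weighted sum $\sum_\eta c^\la_{\mu,\nu,\xi,\eta,\eta}$ over $\mu,\nu,\xi$ so that it telescopes cleanly into the Hall pairing with $gfh$. Once this telescoping is made explicit the remainder of the argument is formal, with the Cauchy--dual-Cauchy reciprocity $gh=1$ closing the proof.
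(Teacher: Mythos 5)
This theorem is cited from Koike--Terada \cite[Theorem~2.1]{KT90} and the paper does not reproduce a proof, so there is no in-paper argument to compare against. Your blind proof is correct and self-contained. The strategy of proving the \emph{universal} identity
\[
\so^+_\la(X,Y,Z) = \sum_{\mu,\nu,\xi}\bigg(\sum_\eta c^\la_{\mu,\nu,\xi,\eta,\eta}\bigg) \rs_{\mu,\nu}(X;Y)\, \so^+_\xi(Z)
\]
in $\La_X \otimes \La_Y \otimes \La_Z$ first, and then applying the evaluation homomorphisms $\tilde\pi_{k;X,Y}$ and $\pi_{n-k;Z}$, is sound: the sum over $(\mu,\nu,\xi,\eta)$ is finite because the multi-Littlewood--Richardson coefficient forces $\abs{\mu}+\abs{\nu}+\abs{\xi}+2\abs{\eta}=\abs{\la}$, so there is no convergence issue to worry about when pushing the maps through. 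Your coefficient computation is right: the coefficient of $s_\alpha(X)s_\beta(Y)s_\gamma(Z)$ on the left is $\langle s_\la, s_\alpha s_\beta s_\gamma f\rangle$ with $f=\sum_{\theta\in\mathscr{P}_0}(-1)^{(\abs{\theta}-\rk\theta)/2}s_\theta$, and on the right is $\langle s_\la, s_\alpha s_\beta s_\gamma\, g f h\rangle$ with $g=\sum_\rho(-1)^{\abs{\rho}}s_\rho s_{\rho'}$ and $h=\sum_\eta s_\eta^2$; the Cauchy and dual-Cauchy identities give $h=\prod_{i,j}(1-x_ix_j)^{-1}$ and $g=\prod_{i,j}(1-x_ix_j)$, so $gh=1$ and the identity closes.

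Two small presentational remarks. First, the aside about $\La$ being an integral domain is superfluous: you only need the implication $gh=1\Rightarrow gfh=f$, which is trivial by commutativity, so no cancellation of $f$ is ever required. Second, when specialising $Y=-X$ in the dual Cauchy identity, the term that flips sign is $s_{\rho'}(-X)=(-1)^{\abs{\rho'}}s_{\rho'}(X)=(-1)^{\abs{\rho}}s_{\rho'}(X)$; you state the rule for $s_\rho$ rather than $s_{\rho'}$, which is harmless since $\abs{\rho}=\abs{\rho'}$, but the reader has to see that. Your argument via the Hall pairing and Cauchy--dual-Cauchy reciprocity is in the spirit of Koike's universal-character methods (compare the Laplace-expansion proofs in \cite{Koike89} and the $\rs$ skew-Schur expansions used throughout the paper), and it has the virtue of establishing the stronger unrestricted identity, which makes the stability phenomena discussed after Theorem~\ref{Thm_SOGL} transparent.
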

Care needs to be taken in considering the case $k=n$, in which case 
$\pi_0$ will extract the constant term of $\so_\la^+$. This may be computed
by \eqref{Eq_UCskewso} and is
\[
\pi_0(\so^+_\la)=\begin{cases}
(-1)^{(\abs{\la}-\rk(\la))/2} & \text{if $\la\in\mathscr{P}_0$}, \\
0 & \text{otherwise}.
\end{cases}
\]

Finally, we also have an expression for the restriction of 
$\mathrm{SO}_{2n+1}$ to $\mathrm{GL}_n$, which is different to the $k=0$ case
above.
\begin{theorem}[{\cite[Theorem~A.1]{KT90}}]\label{Thm_SOGL}
For a partition $\la$ of length at most $n$ we have that
\begin{equation}\label{Eq_SOGL}
\tilde\pi_n(\so_{\la}^+(X,Y))
=\sum_{\mu,\nu}\bigg(\sum_{\eta}c_{\mu,\nu,\eta}^\la\bigg)
\tilde\pi_n(\rs_{\mu,\nu}(X;Y)).
\end{equation}
\end{theorem}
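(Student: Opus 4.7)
The plan is to first lift the claim to an identity between symmetric functions in $\La_X\otimes\La_Y$, namely
\[
\so^+_\la(X,Y)=\sum_{\mu,\nu}\bigg(\sum_\eta c^\la_{\mu,\nu,\eta}\bigg)\rs_{\mu,\nu}(X;Y),
\]
and then apply the restriction homomorphism $\tilde\pi_n$ to both sides. Since $\tilde\pi_n$ is linear and sends $\rs_{\mu,\nu}(X;Y)$ to $s_{[\mu,\nu]_n}(x_1,\dots,x_n)$ by \eqref{Eq_tildepi}, the theorem will follow immediately.

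To prove the symmetric-function identity, I would expand both sides in the basis $\{s_\alpha(X)s_\beta(Y)\}_{\alpha,\beta}$ of $\La_X\otimes\La_Y$ and compare coefficients. On the left-hand side, the skew Schur expansion \eqref{Eq_UCskewso} applied to the combined alphabet, together with the standard coproduct $s_{\la/\mu}(X,Y)=\sum_{\alpha,\beta}c^\la_{\mu,\alpha,\beta}s_\alpha(X)s_\beta(Y)$, yields the coefficient
\[
\sum_{\mu\in\mathscr{P}_0}(-1)^{(\abs{\mu}-\rk(\mu))/2}c^\la_{\mu,\alpha,\beta}.
\]
On the right-hand side, the defining formula \eqref{Eq_rsDef} for $\rs_{\mu,\nu}(X;Y)$ combined with Littlewood--Richardson expansion of each skew Schur and repeated application of multi-LR associativity (to collapse the $\mu$- and $\nu$-summations in turn) produces the coefficient
\[
\sum_{\rho,\eta}(-1)^{\abs{\rho}}c^\la_{\alpha,\beta,\rho,\rho',\eta}.
\]
Coefficient equality for every $\la,\alpha,\beta$ thus reduces the whole theorem to the single identity in $\La$,
\[
\sum_{\mu\in\mathscr{P}_0}(-1)^{(\abs{\mu}-\rk(\mu))/2}s_\mu=\bigg(\sum_\rho(-1)^{\abs{\rho}}s_\rho s_{\rho'}\bigg)\bigg(\sum_\eta s_\eta\bigg).
\]

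The product on the right evaluates to $\prod_i(1+x_i)\prod_{i<j}(1-x_ix_j)$ via the Cauchy identity $\sum_\eta s_\eta=\prod_i(1-x_i)^{-1}\prod_{i<j}(1-x_ix_j)^{-1}$ and the $y\mapsto-x$ specialisation of the dual Cauchy identity, which gives $\sum_\rho(-1)^{\abs{\rho}}s_\rho(x)s_{\rho'}(x)=\prod_{i,j}(1-x_ix_j)$. The main obstacle is therefore identifying the Schur expansion of this product, a classical Littlewood-type identity whose sign involves the non-trivial Frobenius-rank statistic:
\[
\prod_i(1+x_i)\prod_{i<j}(1-x_ix_j)=\sum_{\mu\in\mathscr{P}_0}(-1)^{(\abs{\mu}-\rk(\mu))/2}s_\mu(x).
\]
I would prove this either by a Pfaffian evaluation in the style of Schur or by a sign-reversing involution on the bitableaux produced by the dual Cauchy kernel; either route is standard but not entirely routine. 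Granting it, the remaining manipulations are purely formal consequences of multi-LR associativity.
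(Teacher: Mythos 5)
The paper does not actually supply a proof of this statement: it is quoted verbatim as Theorem~A.1 of Koike and Terada \cite{KT90}. So there is no in-text argument to compare against, and your proposal has to stand on its own.

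On its own it is sound, and in fact it proves a cleaner, unrestricted statement. Your ``lifted'' identity
\[
\so^+_\la(X,Y)=\sum_{\mu,\nu}\bigg(\sum_\eta c^\la_{\mu,\nu,\eta}\bigg)\rs_{\mu,\nu}(X;Y)
\]
holds in $\La_X\otimes\La_Y$ for every partition $\la$, with no length hypothesis; the restriction $l(\la)\leq n$ in the theorem only enters when you specialise via $\tilde\pi_n$ and want both sides to be genuine characters, so applying $\tilde\pi_n$ at the end is indeed innocuous. Your coefficient computation is correct: both sides have $s_\alpha(X)s_\beta(Y)$-coefficient equal to $\sum_{\mu\in\mathscr{P}_0}(-1)^{(\abs{\mu}-\rk\mu)/2}c^\la_{\mu,\alpha,\beta}$ versus $\sum_{\rho,\eta}(-1)^{\abs{\rho}}c^\la_{\alpha,\beta,\rho,\rho',\eta}$, and since $s_\alpha s_\beta$ can realise any $s_\gamma$, the whole matter does reduce to the single generating function identity you wrote down. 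The Cauchy manipulations that reshape $\prod_{i,j}(1-x_ix_j)\cdot\prod_i(1-x_i)^{-1}\prod_{i<j}(1-x_ix_j)^{-1}$ into $\prod_i(1+x_i)\prod_{i<j}(1-x_ix_j)$ are also correct.

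The only place you leave a genuine gap is the final Littlewood-type identity
\[
\prod_i(1+x_i)\prod_{i<j}(1-x_ix_j)=\sum_{\mu\in\mathscr{P}_0}(-1)^{(\abs{\mu}-\rk(\mu))/2}s_\mu(x),
\]
which you flag as ``standard but not entirely routine''. It is indeed a classical Littlewood identity. One painless way to close the gap without a fresh Pfaffian or involution is to observe that it is nothing but the constant-term formula
\[
\pi_0(\so^+_\mu)=\begin{cases}(-1)^{(\abs{\mu}-\rk(\mu))/2}&\text{if }\mu\in\mathscr{P}_0,\\ 0 &\text{otherwise},\end{cases}
\]
already recorded in the paper after Theorem~\ref{Thm_SOGLSO}, fed into the Cauchy identity $\sum_\mu\so^+_\mu(z)s_\mu(x)=\prod_i(1+x_i)\prod_{i<j}(1-x_ix_j)\big/\prod_{i,j}(1-x_iz_j)(1-x_i/z_j)$ at $z=\varnothing$. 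So the plan is correct and, granting one known identity, complete; it is just not a comparison that can be made with the paper, because the paper is silent on the proof.
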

The difference between the $k=n$ case of \eqref{Eq_SOGLSO} and \eqref{Eq_SOGL} 
is that for $n\geq 2l(\la)$ the latter will contain only positive terms, and 
there is no need for modification rules in the computation of the sum.  
In general, the restriction maps in Theorem~\ref{Thm_SOGLSO} can be removed
if $k\geq 2l(\la)$ and $n-k\geq l(\la)$, but of course this excludes the
case $n=k$; see \cite[Corollary~2.3]{KT90}.

\section{Factorisations of universal characters}\label{Sec_proofs}
We now turn to the factorisation of the universal characters under the operator
$\varphi_t$.
As a first step we state the universal character lifts of the factorisation 
results of Lecouvey from \cite{Lecouvey09B}, amounting to 
the computation of $\varphi_t \o_\la$, $\varphi_t\sp_\la$ and
$\varphi_t \so_\la$ in our notation.
These also include the results of Ayyer and Kumari \cite{AK22} as special cases.
As seen in the previous section, these universal characters have a 
uniform generalisation in Hamel and King's symmetric function $\cha_\la(z;q)$.
Our main result is the computation of $\varphi_t \cha_\la(z;q)$ for all
$z\in\mathbb{Z}$, amounting to a large generalisation of the results of
Lecouvey and Ayyer and Kumari.

\subsection{Factorisations of classical characters}\label{Sec_subClass}
In \cite{Lecouvey09B}, Lecouvey sought generalisations of the LLT polynomials 
beyond type A.
To achieve this goal he needed analogues of the action of the $t$-th
Verschiebung operator on the Schur polynomials,
Theorem~\ref{Thm_skewSchur}, for the symplectic and orthogonal
characters.
These results, stated in \cite[Section~3.2]{Lecouvey09B},
give conditions on the vanishing of the characters under $\varphi_t$ for all
$t$. In addition, with the restriction that $t$ must be odd in the 
symplectic and odd orthogonal cases, he expresses the result using branching
coefficients involving a subgroup of Levi type.
In subsequent work \cite{Lecouvey09A}, he used these factorisations to
give expressions for the plethysm $\so_\la^+\circ p_t$ and its cousins by 
passing from the characters to the universal characters.
For $t=2$ some preliminary work towards the computation of these twisted 
characters was done by Mizukawa \cite{Mizukawa02}.

Independently of the results of Lecouvey, Ayyer and Kumari also proved 
expressions for the action of the $t$-th Verschiebung operator on the 
(non-universal) symplectic and orthogonal characters, but rather phrased in
terms of ``twisting'' by a root of unity \cite{AK22}.
There are, however, key differences between their results and those of 
Lecouvey.
Their twisted character identities, when they are nonzero, factor as products of
other characters.
Moreover, they give nicer conditions for when the characters are nonzero.
Namely, they show that $\o_\la, \sp_\la$ and $\so_\la^+$ vanish under
$\varphi_t$ if and only if the $\tcore$ of $\la$ is 
$1$-, $-1$- or $0$-asymmetric respectively.\footnote{Note that $1$-asymmetric partitions have several names including
\emf{threshold partitions} or \emf{doubled distinct partitions}.}
Lifts of the results of Lecouvey, Ayyer and Kumari to the universal 
characters were given in \cite{Albion23}.
The proofs there, like Lecouvey's, are based on the Jacobi--Trudi formulae
for the classical groups.
Note that in \cite{AK22}, due to twisting by a primitive $t$-th root of unity, 
the characters are associated with a Lie group of rank $nt$.
In \cite{Lecouvey09B}, no such restriction on the rank $n$ is assumed, only 
that the length of the partition indexing the character is at most $n$.
Indeed, depending on the remainder of $n$ modulo $t$ the structure of the 
factorisation of the classical characters will change.
We will discuss this more below in the case of $\mathrm{SO}_{2n+1}$, and show
that the construction of Lecouvey may be phrased in terms of the classical 
Littlewood decomposition.
Note that in Schur case it is clear from Theorem~\ref{Thm_skewSchur} that 
cyclic permutations of the quotient do not change the result, and so 
no such distinction must be made in that case.

Recall that in the skew Schur function case, when it is nonzero, the sign of 
$\varphi_t s_{\la/\mu}$ may be expressed elegantly in terms of the
$t$-ribbon tiling of the skew shape $\la/\mu$.
In all previous work the signs obtained by applying the operator $\varphi_t$
to the ordinary and universal characters were not expressed in such a 
combinatorial manner,
rather as the sign of a permutation multiplied by some further factors
to account for matrix operations occurring in the proof.
In the present work we are able to improve on this by giving explicit 
expressions for the signs based on tilings of skew shapes and statistics on
the indexing partitions, as already exemplified in Theorem~\ref{Thm_so} of
the introduction.

Let us now state the two missing cases, beginning with the even orthogonal
universal character.
In these results we again write $\tilde\la$ as shorthand for the $t$-core of
$\la$. 
We also set $\rs_{\la,\mu}:=\rs_{\la,\mu}(X;X;0,0;0;1,1)$.

\begin{theorem}\label{Thm_o}
For all $t\geq 2$ and a partition $\la$ we have that $\varphi_t\o_\la$ vanishes
unless $\tcore(\la)\in\mathscr{P}_1$, in which case
\[
\varphi_t \o_\la
=(-1)^{\abs{\tilde\la}/2}\sgn_t(\la/\tilde\la)
\o_{\lar{0}}\prod_{r=1}^{\lfloor(t-2)/2\rfloor}
\rs_{\lar{r},\lar{t-r}}
\times\begin{cases}
\so_{\lar{t/2}}^- & \text{$t$ even}, \\
1 & \text{$t$ odd}.
\end{cases}
\]
\end{theorem}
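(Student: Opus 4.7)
The plan is to adapt the strategy used for Theorem~\ref{Thm_so}: expand $\o_\la$ as a signed sum of skew Schur functions via \eqref{Eq_UCskewo}, apply $\varphi_t$ termwise using Theorem~\ref{Thm_skewSchur}, and then use the $z=1$ instance of Theorem~\ref{Thm_zAsym} to reorganise the resulting sum into the claimed product. First I would apply $\varphi_t$ to
\[
\o_\la=\sum_{\mu\in\mathscr{P}_1}(-1)^{\abs{\mu}/2}s_{\la/\mu}
\]
and invoke Theorem~\ref{Thm_skewSchur}: the summand vanishes unless $\la/\mu$ is $t$-tileable, i.e.\ unless $\tcore(\la)=\tcore(\mu)$ and $\mur{r}\subseteq\lar{r}$ for all $r$. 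Theorem~\ref{Thm_zAsym} with $z=1$ forces $\tcore(\mu)\in\mathscr{P}_1$, $\mur{0}\in\mathscr{P}_1$ and $\mur{s}=(\mur{t-s})'$ for $1\leq s\leq t-1$, so the vanishing condition $\tcore(\la)\in\mathscr{P}_1$ drops out immediately.

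Assuming $\tilde\la:=\tcore(\la)\in\mathscr{P}_1$, I would next factor $\sgn_t(\la/\mu)=\sgn_t(\la/\tilde\la)\sgn_t(\mu/\tilde\la)$ (using that $\la/\tilde\la$ may be tiled by first tiling $\la/\mu$ and then $\mu/\tilde\la$), so that the $\mu$-independent factor $\sgn_t(\la/\tilde\la)$ can be pulled outside the sum. Then I would reparametrise the remaining sum by the truly independent quotient data: a free $\mur{0}\in\mathscr{P}_1$ with $\mur{0}\subseteq\lar{0}$, free $\mur{r}$ with $\mur{r}\subseteq\lar{r}$ and $(\mur{r})'\subseteq\lar{t-r}$ for $1\leq r\leq \lfloor(t-1)/2\rfloor$, and (when $t$ is even) a free self-conjugate $\mur{t/2}\subseteq\lar{t/2}$. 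Under this reparametrisation the product of skew Schur functions in Theorem~\ref{Thm_skewSchur} splits according to the three types of factors.

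The identifications to be made are then:
\begin{itemize}
\item the inner sum over $\mur{0}\in\mathscr{P}_1$ reproduces $\o_{\lar{0}}$ via \eqref{Eq_UCskewo};
\item each pair $(\mur{r},(\mur{r})')$ with $1\leq r\leq\lfloor(t-1)/2\rfloor$ produces $\rs_{\lar{r},\lar{t-r}}$ via \eqref{Eq_rsDef} specialised to $X=Y$;
\item for $t$ even, the sum over self-conjugate $\mur{t/2}$ reproduces $\so^-_{\lar{t/2}}$ via the skew Schur expansion of $\so_\la^-$.
\end{itemize}

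The main obstacle is the sign bookkeeping. Writing
\[
\tfrac{1}{2}\abs{\mu}=\tfrac{1}{2}\abs{\tilde\la}+\tfrac{t}{2}\Bigl(\abs{\mur{0}}+2\sum_{r=1}^{\lfloor(t-1)/2\rfloor}\abs{\mur{r}}+[t\text{ even}]\abs{\mur{t/2}}\Bigr),
\]
the contribution of $(-1)^{\abs{\mu}/2}$ to each factor is transparent, but one must show that the residual sign $\sgn_t(\mu/\tilde\la)$ absorbs the parity discrepancies needed to produce the correct signs in each of the three expansions, in particular the $(-1)^{\abs{\mur{r}}}$ required by $\rs_{\lar{r},\lar{t-r}}$ in the even-$t$ case and the $(-1)^{\rk(\mur{t/2})/2}$ required to turn a $\so^+$ expansion into a $\so^-$ expansion. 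I would establish this by computing $\sgn_t(\mu/\tilde\la)$ via Lemma~\ref{Lem_Sgn}: the sorting permutations $\sigma_t(\mu;n)$ and $\sigma_t(\tilde\la;n)$ differ by the permutations that sort each runner of the Maya diagram, and these factor across the $t$ runners in a way that, combined with the conjugation pairings $\mur{r}=(\mur{t-r})'$, yields precisely the required sign on each block. Once this distribution is verified, recollecting the global prefactors $(-1)^{\abs{\tilde\la}/2}\sgn_t(\la/\tilde\la)$ completes the proof.
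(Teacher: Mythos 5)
Your overall strategy is exactly the one the paper uses, but implemented directly in the $z=1,q=-1$ special case: expand $\o_\la$ via its skew Schur series \eqref{Eq_UCskewo}, apply Theorem~\ref{Thm_skewSchur} termwise, invoke the $z=1$ case of Theorem~\ref{Thm_zAsym} (equivalently the Garvan--Kim--Stanton proposition) to constrain the inner sum, factor off $\sgn_t(\la/\tilde\la)$, and recollect the remaining sum by pairs of runners. The paper does not give a standalone proof of Theorem~\ref{Thm_o}; it deduces it by setting $(z,q)=(1,-1)$ in Theorem~\ref{Thm_chiz}, whose proof is precisely your outline carried out in general. So the route is the same in spirit.

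The gap is where you expect it: the sign bookkeeping. Your plan is to apply Lemma~\ref{Lem_Sgn} and argue that $\sgn(\sigma_t(\mu;n))\sgn(\sigma_t(\tilde\la;n))$ ``factors across the $t$ runners''. This is not established by the lemma and is not how the paper proceeds. The sign of the sorting permutation records inversions between beads on \emph{different} runners, and these cross-runner inversions are precisely what has to be controlled; they do not decouple runner by runner in any obvious way. The paper instead runs an inductive argument on the quotient of $\mu$: starting from the minimal $\mu_{\mathbf c}$, it adds one cell (respectively one pair of conjugate cells) at a time, reads off from the $t$-Maya diagram how many beads are jumped over, and uses the conjugation pairings $\mur{s}=(\mur{t-s})'$ to show the jump count has exactly the parity needed to produce the factor $(-1)^{\abs{\nu}}$ in each $\rs_{\lar{r},\lar{t-r}}$, the $(-1)^{\abs{\mu}/2}$ in $\o_{\lar{0}}$, and the $(-1)^{(\abs{\mu}+\rk(\mu))/2}$ in $\so^-_{\lar{t/2}}$. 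Your sketch does not make this bead-counting argument, so as written the sign distribution is asserted rather than proved. A small but telling symptom: you say the $\so^+\to\so^-$ discrepancy is $(-1)^{\rk(\mur{t/2})/2}$, but comparing \eqref{Eq_UCskewso} with the expansion of $\so^-_\la$ shows it is $(-1)^{\rk(\mur{t/2})}$, and indeed $\rk(\mu)/2$ need not be an integer. That slip would propagate through any serious attempt to check the claimed cancellation.

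Two smaller remarks. First, your reparametrisation correctly pairs runners $r$ and $t-r$ for $1\leq r\leq\lfloor(t-1)/2\rfloor$; the statement of Theorem~\ref{Thm_o} uses $\lfloor(t-2)/2\rfloor$, which for odd $t$ would drop the pair $\{(t-1)/2,(t+1)/2\}$, so this looks like a misprint in the theorem (it is consistent with the $b=1$ specialisation of Theorem~\ref{Thm_chiz}, which gives the bound $\lfloor(t-1)/2\rfloor$). Second, you identify each pair as producing $\rs_{\lar{r},\lar{t-r}}$ via the definition \eqref{Eq_rsDef}; this is right, but it is worth noting why the $c_r$-dependence present in Theorem~\ref{Thm_chiz} disappears here: because $a=0$ and $uv=q^2=1$, the modified determinant $\rs_{\la,\mu}(0;c;q)$ reduces to the plain Koike character regardless of $c$.
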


If $l(\la)\leq n$ then restricting to $\La_n^{\mathrm{BC}}$ 
recovers \cite[Theorem~2.15]{AK22}.
Secondly, we have the symplectic case.

\begin{theorem}\label{Thm_sp}
For all $t\geq 2$ and a partition $\la$ we have that $\varphi_t\sp_\la$ vanishes
unless $\tilde\la\in\mathscr{P}_{-1}$, in which case
\[
\varphi_t \sp_\la
=(-1)^{(\abs{\tilde\la}+\rk(\tilde{\la}))/2}\sgn_t(\la/\tilde\la)
\sp_{\lar{t-1}}\prod_{r=0}^{\lfloor(t-3)/2\rfloor}
\rs_{\lar{r},\lar{t-r-2}}
\times\begin{cases}
\so_{\lar{(t-2)/2}} & \text{$t$ even}, \\
1 & \text{$t$ odd}.
\end{cases}
\]
\end{theorem}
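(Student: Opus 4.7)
The plan mirrors the strategy sketched for Theorem~\ref{Thm_so} in the introduction, with $\mathscr{P}_0$ replaced by $\mathscr{P}_{-1}$. First I would apply $\varphi_t$ term by term to the skew Schur expansion~\eqref{Eq_UCskewsp}, namely
\[
\varphi_t\sp_\la=\sum_{\mu\in\mathscr{P}_{-1}}(-1)^{\abs{\mu}/2}\,\varphi_t s_{\la/\mu}.
\]
By Theorem~\ref{Thm_skewSchur} each summand vanishes unless $\la/\mu$ is $t$-tileable, which in particular forces $\tcore(\mu)=\tcore(\la)=:\tilde\la$ and $\mur{r}\subseteq\lar{r}$ for every $r$. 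The conjugation-dual of Corollary~\ref{Cor_minimal} (apply the $z=1$ case to $\mu'$, using that $\mu\in\mathscr{P}_{-1}$ if and only if $\mu'\in\mathscr{P}_1$) shows that such a $\mu$ exists only when $\tilde\la$ is itself $-1$-asymmetric. This yields the claimed vanishing.

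Assuming $\tilde\la\in\mathscr{P}_{-1}$, the next step is to parameterise the surviving $\mu$. Applying Theorem~\ref{Thm_zAsym} to $\mu'$ with $z=1$ and conjugating back, the $t$-quotient of $\mu$ decomposes into a free part $\mur{t-1}\in\mathscr{P}_{-1}$, conjugate-paired parts $\mur{r}=(\mur{t-r-2})'$ for $0\leq r\leq\lfloor(t-3)/2\rfloor$, and, when $t$ is even, a central self-conjugate part $\mur{(t-2)/2}\in\mathscr{P}_0$. This is exactly the block structure of the right-hand side of the theorem. Lemma~\ref{Lem_Sgn} then lets me write $\sgn_t(\la/\mu)=\sgn_t(\la/\tilde\la)\sgn_t(\mu/\tilde\mu)$, so the factor $\sgn_t(\la/\tilde\la)$ pulls outside the sum.

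The remaining sum decouples across the independent data in the quotient. The sum over $\mur{t-1}\in\mathscr{P}_{-1}$ contained in $\lar{t-1}$ reassembles, via~\eqref{Eq_UCskewsp}, into $\sp_{\lar{t-1}}$. Each paired sum over $\mur{r}\subseteq\lar{r}$ (with $\mur{t-r-2}=(\mur{r})'\subseteq\lar{t-r-2}$ forced) reassembles, via the $X=Y$ specialisation of the definition~\eqref{Eq_rsDef}, into $\rs_{\lar{r},\lar{t-r-2}}$. When $t$ is even the central sum over self-conjugate $\mur{(t-2)/2}\subseteq\lar{(t-2)/2}$ reassembles, via~\eqref{Eq_UCskewso}, into $\so_{\lar{(t-2)/2}}$. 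Aggregating these gives the claimed product of universal characters.

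The hard part will be the overall sign. One must check that $(-1)^{\abs{\mu}/2}\sgn_t(\mu/\tilde\mu)$, restricted to $\mu\in\mathscr{P}_{-1}$ with $\tcore(\mu)=\tilde\la$ and then summed against the above parameterisation, distributes as the signs already built into~\eqref{Eq_UCskewsp}, \eqref{Eq_UCskewso} and~\eqref{Eq_rsDef} on each factor, times the global prefactor $(-1)^{(\abs{\tilde\la}+\rk(\tilde\la))/2}$. The bookkeeping uses Lemma~\ref{Lem_rank} to distribute $\rk(\mu)$ across core and quotient, the fact that conjugate-paired quotient pieces contribute equal sizes and equal shifted Frobenius ranks so their cross-contributions collapse, and the reinterpretation of $\sgn_t(\mu/\tilde\mu)$ via Lemma~\ref{Lem_Sgn} in terms of permutations sorting the Maya data.
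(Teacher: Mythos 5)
Your strategy is the right one and, modulo one substantive gap, it is exactly how the paper handles the symplectic case: the paper does not prove Theorem~\ref{Thm_sp} directly but proves the general Theorem~\ref{Thm_chiz} and then specialises $(z,q)=(-1,1)$, and the proof of Theorem~\ref{Thm_chiz} runs precisely through the chain you describe — apply $\varphi_t$ to the skew Schur expansion, invoke Theorem~\ref{Thm_skewSchur}, use Theorem~\ref{Thm_zAsym}/Corollary~\ref{Cor_minimal} for the vanishing and the quotient structure, split off $\sgn_t(\la/\tilde\la)$ via Lemma~\ref{Lem_Sgn}, and decouple the sum. Your parametrisation of the surviving $\mu\in\mathscr{P}_{-1}$ via their quotients ($\mur{t-1}\in\mathscr{P}_{-1}$, conjugate pairs $\mur{r}=(\mur{t-r-2})'$, and for $t$ even a central self-conjugate $\mur{(t-2)/2}$) is correct.

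The genuine gap is the sign. You state "one must check" that $(-1)^{\abs{\mu}/2}\sgn_t(\mu/\tilde\la)$ distributes across the quotient as the signs built into~\eqref{Eq_UCskewsp}, \eqref{Eq_UCskewso} and~\eqref{Eq_rsDef}, but you do not carry this out, and it is the bulk of the argument: the paper's proof of Theorem~\ref{Thm_chiz} devotes an entire subsection (the ``factorisation of the sign'') to a delicate bead-moving analysis in the $t$-Maya diagram, case-split according to whether a given move raises a (shifted) Frobenius rank, in order to show that $\sgn_t(\mu/\tilde\la)$ does cancel the cross terms and that the residual sign on each runner block is exactly the one demanded by the target universal character. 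This cannot be waved away as ``bookkeeping.'' In fact, your invocation of Lemma~\ref{Lem_rank} to ``distribute $\rk(\mu)$'' is vestigial here: unlike the $\so^\pm$ cases, the sign in~\eqref{Eq_UCskewsp} carries no $\rk(\mu)$, so Lemma~\ref{Lem_rank} plays no role in the symplectic case, and you would instead need the full bead-counting argument.

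A further point you would have caught on carrying the computation through: the prefactor you copied from the statement, $(-1)^{(\abs{\tilde\la}+\rk(\tilde\la))/2}$, is not even well-defined in general. For a $-1$-asymmetric partition $\abs{\tilde\la}$ is always even but $\rk(\tilde\la)$ need not be; for instance $\la=(1,1)$ with $t=3$ has $\tilde\la=(1,1)\in\mathscr{P}_{-1}$ with $\rk(\tilde\la)=1$, and $\varphi_3\sp_{(1,1)}=-1$. The sign that actually emerges from your plan (the minimal term $\mu=\tilde\la$ in the sum) is $(-1)^{\abs{\tilde\la}/2}$, which also agrees with the specialisation $\varepsilon(1)=(-1)^{(\abs{\tilde\la}-(z+1)\rk(\tilde\la))/2}\sgn_t(\la/\tilde\la)$ of Theorem~\ref{Thm_chiz} at $z=-1$ since there the $\rk$ term drops out. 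So the exponent should read $\abs{\tilde\la}/2$ rather than $(\abs{\tilde\la}+\rk(\tilde\la))/2$; treating the stated sign as a target to be matched would lead the sign analysis astray.
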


As for the previous theorem we may recover \cite[Theorem~2.11]{AK22}.
The odd orthogonal case is given in Theorem~\ref{Thm_so} above and generalises
\cite[Theorem~2.17]{AK22}.

The aforementioned three theorems appeared in
\cite[Theorems~3.2--3.4]{Albion23} with the same signs as in \cite{AK22}.
The expressions for the signs we present here are not only of a more 
combinatorial flavour, but also easier to compute.
Another upshot of these expressions is that they show that the algorithms for
computing the action of $\varphi_t$ on the classical characters in Lecouvey's
work \cite{Lecouvey09B} can be phrased entirely in terms of the Littlewood
decomposition of the underlying partition. 

\subsection{A uniform $(z,q)$-analogue}
The universal characters and the Schur functions are all contained in the
general symmetric function $\cha_\la(z;q)$ of Hamel and King.
Thus, a natural question is whether the operator $\varphi_t$ acts as nicely
on this symmetric function as it does for its special cases.
Our main result is the affirmative answer to this question for all integers
$z$ and including the parameter $q$.

Recall from Corollary~\ref{Cor_minimal} that $\mu_{\mathbf{c}}$ denotes the
minimal $z$-asymmetric partition with $\kappa_t(\mu_{\mathbf{c}})=\mathbf{c}$.
If $z<0$ then the conditions in that corollary need to be conjugated.
From here on out we write $\rs_{\la,\mu}(a;c;q):=\rs_{\la,\mu}(X;X;a,a;c;q,q)$
and extend this to negative $c$ by
$\rs_{\la,\mu}(a;-c;q):=\rs_{\mu,\la}(a;c;q)$.

\begin{theorem}\label{Thm_chiz}
Let $a,b,t,z$ be integers such that $t\geq 2$ and $z=at+b$ where
$0\leq b\leq t-1$. 
Then $\varphi_t \cha_{\la}(z;q)$ vanishes unless 
$\kappa_t(\tcore(\la)):=\mathbf{c}\in \mathcal{C}_{b;t}$ and 
$\la\supseteq\mu_{\mathbf{c}}$.
If these conditions are satisfied, then
\begin{align*}
&\varphi_t\cha_\la(z;q)\\
&\quad=\varepsilon(q)\prod_{\substack{r=0}}^{\lfloor (b-2)/2\rfloor}
\rs_{\lar{r},\lar{b-r-1}}(a+1;c_r;q)
\prod_{s=b}^{\lfloor (t+b-2)/2\rfloor}\rs_{\lar{s},\lar{t+b-s-1}}(a;c_s;q) \\
&\qquad\qquad\qquad\qquad\qquad\times\begin{cases}
1 & \text{if $b$ even, $t$ even},\\
\cha_{\lar{(b-1)/2}}(a+1;q) & \text{if $b$ odd, $t$ odd}, \\
\cha_{\lar{(t+b-1)/2}}(a;q) & \text{if $b$ even, $t$ odd},\\
\cha_{\lar{(b-1)/2}}(a+1;q)\cha_{\lar{(t+b-1)/2}}(a;q) 
& \text{if $b$ odd, $t$ even},
\end{cases}
\end{align*}
where the factor $\varepsilon(q)$ may be expressed as
\[
\varepsilon(q)=(-1)^{(\abs{\mu_{\mathbf{c}}}-(z+1)\rk(\tcore(\la))/2}
\sgn_t(\la/\mu_{\mathbf{c}})q^{\rk(\tcore(\la))}.
\]
\end{theorem}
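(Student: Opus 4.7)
The plan is to apply $\varphi_t$ directly to the skew Schur expansion \eqref{Eq_chizSkew} of $\cha_\la(z;q)$ termwise using Theorem~\ref{Thm_skewSchur}, and then reorganise the resulting sum using the parametrisation of $z$-asymmetric partitions in terms of the Littlewood decomposition afforded by Corollary~\ref{Coro_zAsymPrime}. First I would write
\[
\varphi_t\cha_\la(z;q) = \sum_{\substack{\mu \in \mathscr{P}_z \\ \la/\mu\text{ is }t\text{-tileable}}} (-1)^{(\abs{\mu}-(z+1)\rk(\mu))/2}\,q^{\rk(\mu)}\sgn_t(\la/\mu)\prod_{r=0}^{t-1}s_{\lar{r}/\mur{r}}.
\]
Every surviving $\mu$ has $\tcore(\mu) = \tcore(\la)$, so the existence of any such $\mu$ at all is equivalent to $\mathbf{c} \in \mathcal{C}_{b;t}$ together with $\la \supseteq \mu_{\mathbf{c}}$ by Theorem~\ref{Thm_zAsym} and Corollary~\ref{Cor_minimal}; this yields the claimed vanishing criterion.

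The next step is to parametrise the surviving $\mu$ by the data of their quotients using Corollary~\ref{Coro_zAsymPrime}. The $t$-tuple $(\mur{0},\ldots,\mur{t-1})$ splits into pairs $(\mur{r},\mur{b-r-1})$ for $0 \leq r \leq \lfloor(b-1)/2\rfloor$ and $(\mur{s},\mur{t+b-s-1})$ for $b \leq s \leq \lfloor(t+b-1)/2\rfloor$. Each non-self-paired pair is determined by a single free partition ($\nur{r}$ or $\xir{s}$) via~\eqref{Eq_folding2}, while each self-paired middle index, which exists precisely when $b$ or $t+b$ is odd, forces the corresponding $\mur{\cdot}$ to be itself $(a+1)$- or $a$-asymmetric. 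Substituting this parametrisation, the product $\prod_r s_{\lar{r}/\mur{r}}$ factors along the pairings; summing over all $\nur{r}$ reproduces, by direct comparison with the skew Schur expansion of $\rs_{\la,\mu}(X;Y;a,a;c;q,q)$ from the earlier theorem, the factor $\rs_{\lar{r},\lar{b-r-1}}(a+1;c_r;q)$, the $\xir{s}$ sums similarly produce $\rs_{\lar{s},\lar{t+b-s-1}}(a;c_s;q)$, and each middle-index sum produces $\cha_{\lar{(b-1)/2}}(a+1;q)$ or $\cha_{\lar{(t+b-1)/2}}(a;q)$ by \eqref{Eq_chizSkew}.

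The main obstacle is the sign and $q$-power bookkeeping. I would use $\abs{\mu} = \abs{\tilde\la}+t\sum_r \abs{\mur{r}}$, Lemma~\ref{Lem_rank} to expand $\rk(\mu) = \rk(\tilde\la)+\sum_r \rk_{c_r}(\mur{r})$, and the factorisation $\sgn_t(\la/\mu) = \sgn_t(\la/\mu_{\mathbf{c}})\sgn_t(\mu/\mu_{\mathbf{c}})$ obtained by concatenating ribbon tilings, to distribute the $\mu$-dependent sign into contributions attached to each pair. The key identity to verify is that a single non-middle pair indexed by $\nur{r}$ contributes exactly $(-1)^{\abs{\nur{r}}}q^{2\rk_{c_r}(\nur{r})}$, which matches the summand in the expansion of $\rs_{\lar{r},\lar{b-r-1}}(a+1;c_r;q)$. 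This follows by direct computation of $\abs{\mur{r}}+\abs{\mur{b-r-1}}$ and $\rk_{c_r}(\mur{r})+\rk_{-c_r}(\mur{b-r-1})$ from \eqref{Eq_bFold}, together with the analogous matching at a middle index against \eqref{Eq_chizSkew}. Finally the $\mu$-independent residue, normalised at $\mu = \mu_{\mathbf{c}}$ (where $\rk(\mu_{\mathbf{c}}) = \rk(\tilde\la)$, since the nonempty quotient pieces of $\mu_{\mathbf{c}}$ are single columns contributing zero shifted Frobenius rank), collects into the claimed prefactor $\varepsilon(q)$.
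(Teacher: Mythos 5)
Your proposal follows the same overall strategy as the paper: expand $\cha_\la(z;q)$ by \eqref{Eq_chizSkew}, apply Theorem~\ref{Thm_skewSchur} termwise, parametrise the surviving $z$-asymmetric $\mu$ by Corollary~\ref{Coro_zAsymPrime}, and factor the resulting sum over pairs of quotient indices. The vanishing criterion, the $q$-power bookkeeping via Lemma~\ref{Lem_rank}, and the final matching of each pair-sum against the skew Schur expansions of $\rs_{\lar{r},\lar{b-r-1}}(a+1;c_r;q)$, $\rs_{\lar{s},\lar{t+b-s-1}}(a;c_s;q)$ and $\cha_{\lar{\cdot}}(\cdot;q)$ are all correct and line up with the paper.

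The gap is in the sign factorisation. You claim that the net sign attached to a non-middle pair governed by $\nur{r}$ equals $(-1)^{\abs{\nur{r}}}$, and that this ``follows by direct computation of $\abs{\mur{r}}+\abs{\mur{b-r-1}}$ and $\rk_{c_r}(\mur{r})+\rk_{-c_r}(\mur{b-r-1})$ from \eqref{Eq_bFold}.'' Those quantities only control the exponent of $(-1)^{(\abs{\mu}-(z+1)\rk(\mu))/2}$. The remaining factor $\sgn_t(\mu/\mu_{\mathbf{c}})$ is a ribbon sign; by Lemma~\ref{Lem_Sgn} it is the product of the signs of the sorting permutations $\sigma_t(\mu;n)$ and $\sigma_t(\mu_{\mathbf{c}};n)$, and those permutations count \emph{cross-runner} inversions in the $t$-Maya diagram. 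Nothing about this is a priori localisable to a single pair $(\mur{r},\mur{b-r-1})$: moving a bead on runner $r$ passes over beads on all the other runners, so its contribution to $\sgn_t(\mu/\mu_{\mathbf{c}})$ entangles the quotient pieces. The assertion that the combined sign nevertheless factors over pairs is the main nontrivial content of the theorem, and it cannot be read off from \eqref{Eq_bFold} and Lemma~\ref{Lem_rank} alone.

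The paper resolves this with an incremental bead-counting argument: fixing all but one quotient component and considering the two elementary ways the governing partition can grow — adding a conjugate pair of cells at the ends of a principal hook (shifted rank unchanged) or adding a row on the diagonal (shifted rank increases by one) — it counts the beads jumped over in the $t$-Maya diagram for the corresponding ribbon moves. The $a$-shifted conjugation constraints of Theorem~\ref{Thm_zAsym} (Corollary~\ref{Coro_zAsymPrime}) force the number of intervening beads in the other runners to have a fixed parity, and $(-1)^{\#\text{beads}}$ exactly cancels the change in $(-1)^{(\abs{\mu}-(z+1)\rk(\mu))/2}$, leaving a net $(-1)$ per cell added to $\nur{r}$ or $\xir{s}$. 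That cancellation is what your ``direct computation'' elides, and without it the claim that the sum decouples into the stated product is unjustified.
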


This result contains all of the factorisation theorems for ordinary and 
universal characters previously mentioned. 
Upon setting $q=0$ all characters reduce to Schur functions (either one or
a product of two) and so we recover the straight shape case of
Theorem~\ref{Thm_skewSchur} which was stated as
Theorem~\ref{Thm_Littlewoodvarphi} in the introduction.
However, Theorem~\ref{Thm_skewSchur} is a key ingredient in our proof below,
so we are not able to claim a new proof of this result.
Substituting $q=(-1)^z$ and then choosing $z=0,1$ or $-1$ gives the
factorisations for the classical characters in Theorems~\ref{Thm_so},
\ref{Thm_o} and \ref{Thm_sp} respectively.
If we instead keep the parameter $q$ then we further obtain $q$-deformations of
these factorisations.

Our proof is based on the skew Schur expansion of $\cha_\la(z;q)$
\eqref{Eq_chizSkew}.
This is in contrast to previous proofs of these characters factorisations which
were all based on determinantal expressions.
Our technique gives a better understanding of the structure of
these factorisations. 
In particular, through Theorem~\ref{Thm_zAsym} and its corollaries, it 
explains the combinatorial mechanism of these results. 
Of course, by using the determinantal forms of all the symmetric functions 
involved it is possible to give a purely determinantal proof, however again
the sign will not be so easily expressed in this case.

\subsection{Proof of Theorem~\ref{Thm_chiz}}\label{Sec_proof}
Since the proof has several components, we break it up into smaller sections.
The initial step is obvious: apply the $t$-th Verschiebung operator 
to $\cha_\la(z;q)$ using the skew Schur expansion
\eqref{Eq_chizSkew} and Theorem~\ref{Thm_skewSchur}.
This gives
\begin{align}
\varphi_t\cha_\la(z;q)&=
\sum_{\mu\in\mathscr{P}_z}(-1)^{(\abs{\mu}-\rk(\mu)(z+1))/2}q^{\rk(\mu)}
\varphi_t s_{\la/\mu}\notag \\
&=\sum_{\substack{\mu\in\mathscr{P}_z\\\text{$\la/\mu$ $t$-tileable}}}
(-1)^{(\abs{\mu}-\rk(\mu)(z+1))/2}q^{\rk(\mu)}\sgn_t(\la/\mu)
\prod_{r=0}^{t-1}s_{\lar{r}/\mur{r}}.\label{Eq_TileableSum}
\end{align}

\subsubsection{Vanishing}
From here the vanishing part of the theorem is already evident. 
Firstly, $\la/\mu$ is $t$-tileable only if $\tcore(\la)=\tcore(\mu)$,
so that $\kappa_t(\tcore(\la))$ must lie in $\mathcal{C}_{b;t}$
since $\mu$ is $z$-asymmetric.
If this is the case then Corollary~\ref{Cor_minimal} provides the minimal
term in the sum. In the case $z\geq 0$, this term can only appear if, for 
$0\leq r\leq b-1$ with $c_r>0$ we have $\lar{r}\supseteq((a+1)^{c_r})$
and for $b\leq s\leq t-1$ with $c_s>0$ we have
$\lar{s}\supseteq(a^{c_s})$.
If $z<0$ then we need only conjugate these conditions.

\subsubsection{Identification of the prefactor}
Now assume that we are in the case where $\varphi_t\cha_\la(z)$ is nonzero. 
That is,
$\kappa_t(\tcore(\la))\in\mathcal{C}_{b;t}$ and we have the minimal requirements
on the $t$-quotient just given.
Observe that
\[
\sgn_t(\la/\mu)=\sgn_t(\la/\mu_{\mathbf{c}})\sgn_t(\mu/\mu_{\mathbf{c}}),
\]
so we may already pull out an overall sign of
$\sgn_t(\la/\mu_{\mathbf{c}})$.
Also, $\rk(\mu_{\mathbf{c}})=\rk(\tcore(\la))$ thanks to 
Lemma~\ref{Lem_rank}. The Littlewood decomposition implies
that $\abs{\mu_{\mathbf{c}}}$ is the minimal size of all partitions in the sum, 
so we in fact can remove an overall factor of 
\[
\varepsilon(q):=(-1)^{(\abs{\mu_{\mathbf{c}}}-(z+1)\rk(\tcore(\la)))/2}
q^{\rk(\tcore(\la))}\sgn_t(\la/\mu_{\mathbf{c}}),
\]
as desired.

Collecting the above we now have that
\begin{multline}
\varphi_t\cha_\la(z;q)\\
=\varepsilon(q)
\sum_{\substack{\mu\in\mathscr{P}_z\\\text{$\la/\mu$ $t$-tileable}}}
(-1)^{\sum_{r=0}^{t-1}(t\abs{\mur{r}}-(z+1)\rk_{c_r}(\mur{r}))/2}
\sgn_t(\mu/\mu_{\mathbf{c}})
\prod_{r=0}^{t-1}q^{\rk_{c_r}(\mur{r})}s_{\lar{r}/\mur{r}}.
\end{multline}
As a direct consequence of our Theorem~\ref{Thm_zAsym} we can replace the
sum over $\mu\in\mathscr{P}_z$ with a sum over $t$-tuples of partitions
satisfying the conditions \eqref{Eq_folding2} such that
\[
\mu=\phi_t^{-1}\big(\mathbf{c},(\mur{0},\dots,\mur{t-1})\big).
\]
In fact, the conditions \eqref{Eq_folding2} ensure that the product of
skew Schur functions coincides with the product obtained by expanding
the right-hand side of the theorem.

\subsubsection{Factorisation of the sign}
The only thing needed in order to show that the sum \eqref{Eq_TileableSum} 
decouples in the desired way is the factorisation of the interior sign.
This will be achieved by an inductive argument by considering 
terms in the sum, say $\mu$ and $\nu$, for which $\abs{\mu}-\abs{\nu}$ is 
as small as possible.
Also, it is most convenient here to assume that $z\geq 0$.
For $z\leq0$ the same set of steps will yield the factorisation of the sign.

Consider the case where $t+b$ is odd and $b<t-1$ and fix all entries in the
quotient of $\mu$ except for $\mur{(t+b-1)/2}$.
Since $c_{(t+b-1)/2}=0$ 
the minimal choice of quotient entry is
$\mur{(t+b-1)/2}=\varnothing$ and 
$\rk_{c_{(t+b-1)/2}}(\mur{(t+b-1)/2})=\rk(\mur{(t+b-1)/2})$.
There are two ways to add cells to $\mur{(t+b-1)/2)}$ whilst remaining in
the set of $z$-asymmetric partitions: (i) we may add a row of $a+1$ cells, the 
left-most of which sits on the main diagonal of $\mur{(t+z-1)/2}$ or (ii) a 
pair of cells at either end of a principal hook of $\mu$.
In case (i), in terms of the $t$-Maya diagram, this corresponds to moving a 
bead directly to the left of the origin $a+1$ spaces to the right.
As we know from Lemma~\ref{Lem_Sgn}, the sign $\sgn_t(\mu/\mu_{\mathbf{c}})$
will change by the number of beads passed over.
The conditions \eqref{Eq_bFold} ensure that there are no beads present in
this region in any of the runners labelled $0\leq r\leq b-1$.
Therefore the only beads counted when computing the sign lie above runner
$(t+b-1)/2$ in the column directly to the left of the origin, and strictly
between runners $b-1$ and $(t+b-1)/2$ in column $a+1$.
However, conditions \eqref{Eq_tFold2} tell us that the number of such beads
is always $(t-b-1)/2$, since the runners either side of $(t+b-1)/2$ form 
pairs up to $a$-shifted conjugation.
This introduces a factor of $(-1)^{(t-b-1)/2}$, but since we have added $a+1$
cells to the $t$-quotient and the rank has further increased by one this
sign change cancels with that coming from the exponent of $-1$, leading to
no overall sign change.
In case (ii) the rank is unchanged and the two ribbons must be 
conjugates of one another, so their heights sum to $t-1$.
Putting this together, we see that the sign associated to $\mur{(t+b-1)/2}$
is equal to $(-1)^{(\abs{\mur{t+b-1}}-(a+1)\rk(\mur{t+b-1}))/2}$.

Now assume that $b$ is odd. Again since $c_{b-1}=0$ the minimal choice of
$\mur{b-1}$ is $\varnothing$ and $\rk_{c_{b-1}}(\mur{b-1})=\rk(\mur{b-1})$. 
The analysis is almost exactly the same as that of the previous paragraph. 
We again have two cases corresponding to either increasing the rank of 
$\mur{b-1}$ or not. If we do not, then the sign will change since we add a
pair of conjugate ribbons.
If the rank does increase, then we are moving a bead from column $-1$ of 
runner $b-1$ to column $a+2$.
In the range $b\leq s\leq t-1$ we will find precisely $t-b$ beads, since
each runner will have a single bead in either column $-1$ or column $a+1$ by
the conjugation conditions.
Similar to the previous case we will find $(b-1)/2$ beads in the range
$0\leq r\leq b-1$, thus contributing $(-1)^{(2t-b-1)/2}$ all together.
But this is precisely the sign coming from the change in size and rank of the
quotient, leading to no overall change in sign.
Thus we have shown that the sign in this case changes by
$(-1)^{(\abs{\mur{b-1}}-(a+2)\rk(\mur{b-1}))/2}$.

For the next case take a pair of runners $r$ and $t+b-r-1$ for 
$b\leq r\leq t-1$ such that $r\neq t+b-r-1$.
The partitions $\mur{r}$ and $\mur{t+b-r-1}$ in the quotient are governed by a 
single partition, $\xir{r}$, such that $
\mur{r}=\xir{r}+(a^{c_r+\rk_{c_r}(\xir{r})})$ and 
$\mur{t+z-r-1}=(\xir{r})'+(a^{\rk_{c_r}(\xir{r})})$.
Without loss of generality assume that $c_r\geq 0$.
By the definition of the quotient partitions we have 
$\rk_{c_r}(\mur{r})=\rk_{c_r}(\xir{r})=\rk_{-c_r}(\xir{t+z-r-1})
=\rk_{c_{t+b-r-1}}(\mur{t+b-r-1})$.
The minimal partition in the sum, $\mu_{\mathbf{c}}$, has already absorbed 
some of the contribution from $\mur{r}$, so we are left with the sign 
contribution
\(
(-1)^{\abs{\xir{r}}-(z+1)\rk_{c_r}(\xir{r})}.
\)
As above there are two cases: (i) $\rk_{c_r}(\xir{r})$ does not increase
and (ii) $\rk_{c_r}(\xir{r})$ increases.
In case (i) then the analysis is exactly the same as before and the two ribbons
added will be conjugates of one another so that the overall sign changes.
For case (ii), it is convenient to use the $t$-Maya diagram.
Indeed, increasing $\rk_{c_r}(\nur{r})$ by one corresponds to the moving of
two beads on runners $r$ and $t+b-r-1$ from column $-1$ to column $a+1$.
If we interpret the sign of these two ribbons in terms of bead-counting,
then the only beads not double-counted are those strictly between the
runners $r$ and $t+b-r-1$.
However, between the two runners in question all quotient elements are 
$a$-shifted conjugate pairs with the addition of an $a$-symmetric partition in 
the case $t+b$ is odd.
This implies that the number of beads contributing to the sign
is equal to the number of such runners, namely to $t+b-2r-2$.
This procedure is exemplified in Figure~\ref{Fig_conjMove}.
Since we have added a single cell to $\xir{r}$ and increased its rank by $1$ 
the overall sign changes in this case.
In either case we see that the sign may be expressed as $(-1)^{\abs{\xir{r}}}$.

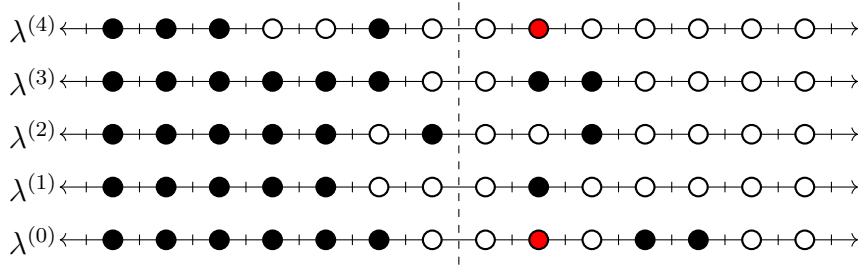
\begin{figure}[htb]
\centering
\begin{tikzpicture}[scale=0.7]
\foreach \i in {0,-1,-2,-3,-4}{
\draw[<->,opacity=0.7] (-7.5,\i) -- (7.5,\i);}
\foreach \i in {-7,...,-1,1,2,3,4,5,6,7}{
\foreach \j in {0,-1,-2,-3,-4}{
\draw[opacity=0.5] (\i,\j+0.1) -- (\i,\j-0.1);}}
\draw[opacity=0.5,dashed] (0,0.5) -- (0,-4.5);
\foreach \i in {-4,-3,-1,0,2,3,4,5,6}{
\draw[thick,fill=white] (\i+0.5,0) circle (5pt);}
\foreach \i in {-7,-6,-5,-2}{
\draw[thick,fill=black] (\i+0.5,0) circle (5pt);}
\foreach \i in {-1,0,3,4,5,6}{
\draw[thick,fill=white] (\i+0.5,-1) circle (5pt);}
\foreach \i in {-7,-6,-5,-4,-3,-2,1,2}{
\draw[thick,fill=black] (\i+0.5,-1) circle (5pt);}
\foreach \i in {-2,0,1,3,4,5,6}{
\draw[thick,fill=white] (\i+0.5,-2) circle (5pt);}
\foreach \i in {-7,-6,-5,-4,-3,-1,2}{
\draw[thick,fill=black] (\i+0.5,-2) circle (5pt);}
\foreach \i in {-2,-1,0,2,3,4,5,6}{
\draw[thick,fill=white] (\i+0.5,-3) circle (5pt);}
\foreach \i in {-7,-6,-5,-4,-3,1}{
\draw[thick,fill=black] (\i+0.5,-3) circle (5pt);}
\foreach \i in {-1,0,2,5,6}{
\draw[thick,fill=white] (\i+0.5,-4) circle (5pt);}
\foreach \i in {-7,-6,-5,-4,-3,-2,3,4}{
\draw[thick,fill=black] (\i+0.5,-4) circle (5pt);}
\draw[thick,fill=red] (1.5,0) circle (5pt);
\draw[thick,fill=red] (1.5,-4) circle (5pt);
\node at (-8,-4) {$\lar{0}$}; \node at (-8,-3) {$\lar{1}$};
\node at (-8,-2) {$\lar{2}$}; \node at (-8,-1) {$\lar{3}$};
\node at (-8,0) {$\lar{4}$};
\end{tikzpicture}
\caption{The $5$-Maya diagram of the $5$-asymmetric partition
$\la=(20~15~13~12~9~8~6~5\mid 15~10~8~7~4~3~1~0)$
with $\kappa_5(\la)=(2,-1,0,1,-2)$. The beads shaded red have been moved two 
spaces to the right, producing a sign of $-1$.}
\label{Fig_conjMove}
\end{figure}

For our final cases we take the pair of runners $r$ and $b-r-1$ where
$0\leq r\leq b-1$ and $r\neq b-r-1$.
Without loss of generality again assume that $c_r\geq 0$.
The associated pair of partitions is here governed by a single partition
$\nur{r}$ for which $\mur{r}=\nur{r}+((a+1)^{c_r+\rk_{c_r}(\nur{r})})$ and
$\mur{z-r-1}=(\nur{r})'+((a+1)^{\rk_{c_r}(\nur{r})})$.
However, the analysis of the previous paragraph applies in the same manner
to this case.
If we add a cell to $\nur{r}$ such that the $c_r$-shifted rank does not 
change then this corresponds to
a pair of conjugate ribbons and again giving an overall sign of $-1$.
On the other hand, if $\rk_{c_r}(\nur{r})$ increases then the sign also
changes by $-1$, corresponding to a total sign change of $(-1)^{\abs{\nur{r}}}$
in either case.

Combining all of the above cases we have shown that if $z\geq 0$ the sign in 
the sum is equal to
\begin{multline}
\prod_{r=0}^{\lfloor(b-2)/2\rfloor}(-1)^{\abs{\nur{r}}}
\prod_{r=b}^{\lfloor(b+t-2)/2\rfloor}(-1)^{\abs{\xir{r}}}\\
\times
\begin{cases}
1 & \text{$b$ even, $t$ even}, \\
(-1)^{(\abs{\mur{(b-1)/2}}-(a+2)\rk(\mur{(b-1)/2})/2} & \text{$b$ odd, $t$ odd}, \\
(-1)^{(\abs{\mur{(t+b-1)/2}}-(a+1)\rk(\mur{(t+b-1)/2}))/2} & 
\text{$b$ even, $t$ odd}, \\
(-1)^{(\abs{\mur{(b-1)/2}}-(a+2)\rk(\mur{(b-1)/2})+\abs{\mur{(t+b-1)/2}}-(a+1)\rk(\mur{(t+b-1)/2}))/2} & \text{$b$ odd, $t$ even}.
\end{cases}\label{Eq_signProof}
\end{multline}
It follows from the same set of steps that in the case $z\leq 0$ the same sign
is obtained, and we spare the reader repeating the details.

\subsubsection{Final steps for factorisation}
To conclude the proof, note that the structure of the sign decomposition
\eqref{Eq_signProof} is the same as that of the theorem.
In particular, the sign factors completely over the quotient, and the 
sum now decouples into a product of sums. 
Recalling our convention regarding $\rs_{\la,\mu}(a;c;q)$ when $c<0$,
the sums governed by the $\nur{r}$ for $1\leq r\leq \lfloor(b-2)/2\rfloor$ 
will each produce a copy of 
$\rs_{\lar{r},\lar{b-r-1}}(a+1;c_r;q)$.
The sums governed by the $\xir{r}$ for
$b\leq r\leq \lfloor(t+b-2)/2\rfloor$ will give copies of 
$\rs_{\lar{r},\lar{t+b-r-1}}(a;c_r;q)$.
If $b$ is odd then we also pick up a copy of 
$\cha_{\lar{(b-1)/2}}(a+1;q)$, and if $b+t$ is odd then we pick up a copy of
$\cha_{\lar{(t+b-1)/2}}(a;q)$, as desired.

\section{Plethysm rules for universal characters}\label{Sec_SXP}
As we discussed in the introduction, the factorisation of the Schur function
under $\varphi_t$ is intimately related with the Schur expansion of the
plethysm $s_\la\circ p_t$.
This expression, known as the SXP rule, has several extensions, the most 
general of which we will reproduce here together with a short proof showing
the equivalence with (the full) Theorem~\ref{Thm_skewSchur}.
Then our attention turns to generalisations of this rule to the universal
characters due to Lecouvey.

\subsection{Wildon's SXP rule}
One of the first applications of Littlewood's core and quotient construction is
to the plethysm $s_\la\circ p_t$, his expression for which is now referred to 
as the \emf{SXP rule} \cite[p.~351]{Littlewood51}.
The rule was reproved by Chen, Garsia and Remmel in \cite{CGR84}, relying on 
the $\mu=\varnothing$ 
case of Theorem~\ref{Thm_skewSchur}. It was later given an involutive proof by
Remmel and Shimozono \cite[\S5]{RS98}.
Recently, Wildon proved an extension of the SXP rule which manifests as the 
Schur expansion for the expression 
$s_\tau(s_{\la/\mu}\circ p_t)$  
and, moreover, his proof relies entirely on a sequence of bijections and
involutions.
Here, we wish to point out that Wildon's SXP rule is equivalent to the
full Theorem~\ref{Thm_skewSchur}.

\begin{theorem}[{\cite[Theorem~1.1]{Wildon18}}]\label{Thm_SXP}
For any integer $t\geq 2$ and partitions $\la,\mu,\tau$,
\[
s_\tau(s_{\la/\mu}\circ p_t)
=\sum_{\substack{\nu \\ \textup{$\nu/\tau$ $t$-tileable}}}
\sgn_t(\nu/\tau)c_{\nur{0}/\tau^{(0)},\dots,\nur{t-1}/\tau^{(t-1)},\mu}^{\la}
s_{\nu}.
\]
\end{theorem}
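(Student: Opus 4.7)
The plan is to verify Wildon's SXP rule by computing the coefficient of each $s_\nu$ in $s_\tau(s_{\la/\mu}\circ p_t)$ via the Hall inner product, and then applying the two adjointness relations \eqref{Eq_SchurAdj} and \eqref{Eq_adjoint} together with Theorem~\ref{Thm_skewSchur} in turn. Since the Schur functions are orthonormal with respect to $\langle\cdot,\cdot\rangle$, it suffices to show that for every partition $\nu$
\[
\langle s_\tau(s_{\la/\mu}\circ p_t), s_\nu\rangle
\]
equals $\sgn_t(\nu/\tau)c_{\nur{0}/\tau^{(0)},\dots,\nur{t-1}/\tau^{(t-1)},\mu}^{\la}$ when $\nu/\tau$ is $t$-tileable, and vanishes otherwise.

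The first step is to strip off the factor of $s_\tau$ using \eqref{Eq_SchurAdj}, which identifies the adjoint of multiplication by $s_\tau$ with the skewing operation $s_\nu\mapsto s_{\nu/\tau}$, giving
\[
\langle s_\tau(s_{\la/\mu}\circ p_t), s_\nu\rangle
=\langle s_{\la/\mu}\circ p_t, s_{\nu/\tau}\rangle.
\]
Next I would invoke the defining plethysm--Verschiebung adjointness \eqref{Eq_adjoint} to move $\varphi_t$ onto the right-hand argument:
\[
\langle s_{\la/\mu}\circ p_t, s_{\nu/\tau}\rangle
=\langle s_{\la/\mu}, \varphi_t s_{\nu/\tau}\rangle.
\]

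Now the key input, the skew-Schur version of Theorem~\ref{Thm_skewSchur}, takes over: $\varphi_t s_{\nu/\tau}$ vanishes unless $\nu/\tau$ is $t$-tileable, in which case it equals $\sgn_t(\nu/\tau)\prod_{r=0}^{t-1}s_{\nur{r}/\tau^{(r)}}$. This already produces both the tileability restriction on the sum and the prefactor $\sgn_t(\nu/\tau)$ in Wildon's formula. A final application of \eqref{Eq_SchurAdj} (this time to migrate $s_\mu$ across the inner product) yields
\[
\Bigl\langle s_{\la/\mu}, \prod_{r=0}^{t-1}s_{\nur{r}/\tau^{(r)}}\Bigr\rangle
=\Bigl\langle s_\la, s_\mu\prod_{r=0}^{t-1}s_{\nur{r}/\tau^{(r)}}\Bigr\rangle,
\]
which is by definition the multi-Littlewood--Richardson coefficient $c_{\nur{0}/\tau^{(0)},\dots,\nur{t-1}/\tau^{(t-1)},\mu}^{\la}$.

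There is no real obstacle once Theorem~\ref{Thm_skewSchur} is in hand: the argument is a short chain of adjointness manoeuvres, and in fact the same chain run in reverse deduces Theorem~\ref{Thm_skewSchur} from Theorem~\ref{Thm_SXP}, explaining the equivalence announced in the preceding paragraph. The only step that requires a moment of care is interpreting the symbol $c^\la_{\cdots,\mu}$ when some of the factors are genuine skew shapes; this is handled by expanding each $s_{\nur{r}/\tau^{(r)}}$ in the Schur basis and regrouping, which matches the definition of $c_{\mathcal{S}}^\la$ for multisets of skew shapes given earlier in the paper.
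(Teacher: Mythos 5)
Your proposal is correct and follows essentially the same route as the paper: compute the coefficient of $s_\nu$ via the Hall inner product, apply the two adjointness relations \eqref{Eq_SchurAdj} and \eqref{Eq_adjoint} to reduce to $\langle s_{\la/\mu},\varphi_t s_{\nu/\tau}\rangle$, invoke Theorem~\ref{Thm_skewSchur}, and convert the resulting inner product into a multi-Littlewood--Richardson coefficient with one more application of \eqref{Eq_SchurAdj}. The paper merely compresses your first two adjointness steps into a single line.
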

\begin{proof}
By the definition of the skew Schur functions we may express the coefficient
of $s_\nu$ in the Schur expansion of the left-hand side as
\[
\langle s_\tau(s_{\la/\mu}\circ p_t),s_\nu\rangle
=\langle s_{\la/\mu},\varphi_t s_{\nu/\tau}\rangle.
\]
Applying Theorem~\ref{Thm_skewSchur}  with
$\la/\mu\mapsto \nu/\tau$ on the right-hand side of this 
equation then shows that the above vanishes unless
$\nu/\tau$ is $t$-tileable, in which case it is given by
\begin{align*}
\sgn_t(\nu/\tau)\bigg \langle s_{\la/\mu},\prod_{r=0}^{t-1}
s_{\nur{r}/\tau^{(r)}}\bigg\rangle
&=
\sgn_t(\nu/\tau)\bigg \langle s_\la,s_\mu\prod_{r=0}^{t-1}
s_{\nur{r}/\tau^{(r)}}\bigg\rangle \\
&=\sgn_t(\nu/\tau)c_{\nur{0}/\tau^{(0)},\dots,\nur{t-1}/\tau^{(t-1)},\mu}^{\la}.
\qedhere
\end{align*}
\end{proof}

\subsection{SXP rules for universal characters}
Since, like the Schur functions, the universal characters admit nice 
factorisations under the map $\varphi_t$, it is natural to also seek
SXP-type rules for these symmetric functions.
This question has already been considered by Lecouvey, who, following
his paper \cite{Lecouvey09B}, gave analogues of the SXP rule for the
universal symplectic and orthogonal characters
\cite{Lecouvey09A}.
In this section we wish to restate these rules more explicitly by using our 
combinatorial framework.

Define coefficients $a_{\la,\nu}^{\bullet}(t)$ where
$\bullet$ is one of $\sp$, $\o$ or $\so^+$ by
\[
\o_\la\circ p_t
=\sum_{\nu} a_{\la,\nu}^{\o}(t) \o_\nu,
\quad \sp_\la\circ p_t=\sum_{\nu}a_{\la,\nu}^{\sp}(t),
\quad\text{and}\quad
\so^+_\la\circ p_t
=\sum_{\nu}a_{\la,\nu}^{\so^+}(t) \so_\nu^+.
\]

To begin, we first point out that it is not difficult to give explicit,
albeit cumbersome, expressions for these coefficients.

\begin{lemma}[{\cite[Lemma~3.1.1]{Lecouvey09B}}]\label{Lem_cumbersome}
We have
\begin{align*}
a_{\la,\nu}^{\o}(t)
&=\sum_{\mu\in\mathscr{P}_1} 
\sum_{\substack{\xi\\\tcore(\xi)=\varnothing}}\sum_{\substack{\eta\\\textup{$\eta$ even}}}
(-1)^{\abs{\mu}/2}\sgn_t(\xi)
c_{\xir{0},\dots,\xir{t-1},\mu}^\la 
c_{\nu,\eta}^{\xi}, \\
a_{\la,\nu}^{\sp}(t)
&=\sum_{\mu\in\mathscr{P}_{-1}} 
\sum_{\substack{\xi\\\tcore(\xi)=\varnothing}}\sum_{\substack{\eta\\\textup{$\eta'$ even}}}
(-1)^{\abs{\mu}/2}\sgn_t(\xi)
c_{\xir{0},\dots,\xir{t-1},\mu}^\la 
c_{\nu,\eta}^{\xi}, \\
a_{\la,\nu}^{\so^+}(t)
&=\sum_{\mu\in\mathscr{P}_0} 
\sum_{\substack{\xi\\\tcore(\xi)=\varnothing}}\sum_{\eta}
(-1)^{\abs{\nu}+(\abs{\mu}-\rk(\mu))/2}\sgn_t(\xi)
c_{\xir{0},\dots,\xir{t-1},\mu}^\la 
c_{\nu,\eta}^{\xi}.
\end{align*}
Moreover, $a_{\la,\nu}^{\o}(t)=(-1)^{\abs{\la}(t-1)}a_{\la',\nu'}^{\sp}(t)$.
\end{lemma}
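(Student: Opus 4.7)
The plan is to expand each universal character as a signed sum of skew Schur functions using the identities in \eqref{Eq_UCskew}, apply the $\tau=\varnothing$ case of Wildon's SXP rule (Theorem~\ref{Thm_SXP}) to rewrite each $s_{\la/\mu}\circ p_t$ in the Schur basis, and then re-expand those Schur functions in the appropriate universal character basis using the Character Interrelation Theorem \eqref{Eq_CIR}. Each of the three formulas is obtained by reading off the coefficient of the relevant universal character after these three substitutions.

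Concretely, for the orthogonal case, linearity of plethysm together with \eqref{Eq_UCskewo} gives
\[
\o_\la\circ p_t=\sum_{\mu\in\mathscr{P}_1}(-1)^{\abs{\mu}/2}\,(s_{\la/\mu}\circ p_t).
\]
Taking $\tau=\varnothing$ in Theorem~\ref{Thm_SXP} (where $\nu/\varnothing$ is $t$-tileable iff $\tcore(\nu)=\varnothing$) yields
\[
s_{\la/\mu}\circ p_t=\sum_{\xi:\,\tcore(\xi)=\varnothing}\sgn_t(\xi)\,c^{\la}_{\xi^{(0)},\dots,\xi^{(t-1)},\mu}\,s_\xi,
\]
and the orthogonal line of \eqref{Eq_CIR} gives the coefficient of $\o_\nu$ in each $s_\xi$ as $\sum_{\eta\text{ even}}c^\xi_{\nu,\eta}$. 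Assembling these three expansions reproduces the stated formula for $a^{\o}_{\la,\nu}(t)$. The symplectic case is structurally identical: \eqref{Eq_UCskewsp} replaces $\mathscr{P}_1$ by $\mathscr{P}_{-1}$, and the ``$\eta'$ even'' version of the CIT replaces the ``$\eta$ even'' one.

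The odd orthogonal case follows the same recipe starting from \eqref{Eq_UCskewso}, but the direct computation produces the sign $(-1)^{\abs{\eta}+(\abs{\mu}-\rk(\mu))/2}$, coming from pairing the sign in \eqref{Eq_UCskewso} with the $(-1)^{\abs{\eta}}$ in the $\so^+$ line of \eqref{Eq_CIR}. To reach the stated $(-1)^{\abs{\nu}+(\abs{\mu}-\rk(\mu))/2}$, I would use the support constraints on the summand: nonvanishing of $c^{\xi}_{\nu,\eta}$ forces $\abs{\xi}=\abs{\nu}+\abs{\eta}$, and nonvanishing of $c^{\la}_{\xi^{(0)},\dots,\xi^{(t-1)},\mu}$ together with $\tcore(\xi)=\varnothing$ forces $\abs{\xi}=t(\abs{\la}-\abs{\mu})$. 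Combined with the observation that $\abs{\mu}\equiv\rk(\mu)\pmod 2$ for $\mu\in\mathscr{P}_0$, a parity count lets one interchange $\abs{\eta}$ and $\abs{\nu}$ in the exponent on the support. The final claim $a^{\o}_{\la,\nu}(t)=(-1)^{\abs{\la}(t-1)}a^{\sp}_{\la',\nu'}(t)$ follows from applying $\omega$ to $\o_\la\circ p_t$ term-by-term in \eqref{Eq_UCskewo} using $\omega\o_\la=\sp_{\la'}$, $\omega s_{\la/\mu}=s_{\la'/\mu'}$ and \eqref{Eq_omegapt}; the parity-dependent contributions cancel because Frobenius notation $(u+1\mid u)$ forces $\abs{\mu}$ to be even for every $\mu\in\mathscr{P}_1$, leaving only the clean global factor $(-1)^{\abs{\la}(t-1)}$.

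The main obstacle I expect is the sign manipulation in the $\so^+$ case: the rewriting of $(-1)^{\abs{\eta}}$ as $(-1)^{\abs{\nu}}$ is not a formal identity but holds only on the support of the triple Littlewood--Richardson product, so one must carefully combine the $t$-core condition on $\xi$, the self-conjugacy of $\mu$, and the Littlewood--Richardson degree constraints to justify it. Everything else is a mechanical composition of three known rewrites, and the duality is then a routine $\omega$-translation.
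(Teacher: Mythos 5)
Your strategy is exactly the paper's: expand the universal character as a signed sum of skew Schur functions via \eqref{Eq_UCskew}, apply Theorem~\ref{Thm_SXP} with $\tau=\varnothing$, re-expand in the appropriate universal character basis via \eqref{Eq_CIR}, and obtain the duality by applying $\omega$ together with \eqref{Eq_omegapt} and the parity observation for $\mu\in\mathscr{P}_1$. The $\o$ and $\sp$ cases and the duality claim are carried out correctly and match the paper's argument, and your remark that $\abs{\mu}$ is automatically even for $\mu\in\mathscr{P}_1$ is a clean way to dispose of the non-homogeneity issue.

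However, the ``main obstacle'' you isolate in the $\so^+$ case cannot be resolved in the way you propose: the equality $(-1)^{\abs{\eta}}=(-1)^{\abs{\nu}}$ does \emph{not} hold on the support. On the support one has $\abs{\eta}+\abs{\nu}=\abs{\xi}=t(\abs{\la}-\abs{\mu})$, so $(-1)^{\abs{\eta}}=(-1)^{t(\abs{\la}-\abs{\mu})}(-1)^{\abs{\nu}}$. Your observation that $\abs{\mu}\equiv\rk(\mu)\pmod 2$ for self-conjugate $\mu$ is true but does not control the parity of $t(\abs{\la}-\abs{\mu})$: whenever $t$ is odd and $\abs{\la}-\abs{\mu}$ is odd the two signs disagree, and both parities of $\abs{\mu}$ genuinely occur in the sum. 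A concrete check: take $\la=(1)$, $t=3$, $\nu=\varnothing$. Since $\so^+_{(1)}=s_{(1)}+1$ one has
\[
\so^+_{(1)}\circ p_3=p_3+1=\so^+_{(3)}-\so^+_{(2,1)}+\so^+_{(1,1,1)},
\]
so $a^{\so^+}_{(1),\varnothing}(3)=0$. The right-hand side with $(-1)^{\abs{\eta}}$ reproduces $0$ (the $\mu=(1),\xi=\varnothing$ term contributes $+1$, and the three $\mu=\varnothing$, $\xi\in\{(3),(2,1),(1,1,1)\}$ terms contribute $-1+1-1=-1$), whereas with $(-1)^{\abs{\nu}}$ it gives $2$. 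In other words, the line you call the ``direct computation'' is the correct one, and the honest conclusion of your derivation is that the exponent displayed in the $\so^+$ formula should read $\abs{\eta}$ rather than $\abs{\nu}$ (a misprint in the statement as given), not that some hidden parity constraint reconciles the two. You should report that discrepancy rather than try to force agreement.
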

\begin{proof}
We begin with the first identity.
Expanding $\o_\la\circ p_t$ in terms of skew Schur functions and then 
applying the SXP rule of Theorem~\ref{Thm_SXP} with 
$\tau=\varnothing$ leads to
\[
\o_\la\circ p_t
=\sum_{\mu\in\mathscr{P}_1} 
\sum_{\substack{\xi\\\tcore(\xi)=\varnothing}}
(-1)^{\abs{\mu}/2}\sgn_t(\xi)
c_{\xir{0},\dots,\xir{t-1},\mu}^\la s_\xi.
\]
By the character interrelation formula
\eqref{Eq_CIR} we have
\[
\o_\la\circ p_t
=
\sum_{\nu}\o_\nu\bigg(
\sum_{\mu\in\mathscr{P}_1} 
\sum_{\substack{\xi\\\tcore(\xi)=\varnothing}}\sum_{\substack{\eta\\\text{$\eta$ even}}}
(-1)^{\abs{\mu}/2}\sgn_t(\xi)
c_{\xir{0},\dots,\xir{t-1},\mu}^\la 
c_{\nu,\eta}^{\xi}\bigg).
\]
The same steps yield the formulae for the other characters.
For the duality between the coefficients one uses the involution $\omega$
combined with \eqref{Eq_omegapt}.
Note that the universal characters are not homogeneous symmetric functions.
However,
the skew Schur expansions show that in the symplectic and even orthogonal
cases they are sums of homogeneous symmetric
functions whose degrees agree modulo two, and so the identity still holds in
this case.
\end{proof}

In fact, Lecouvey shows that $a_{\la,\nu}^{\o}(t)=a_{\la,\nu}^{\so^+}(t)$,
his argument being based on the fact that $\pi_n'$ and the plethysm by
$p_t$ commute.
Using this fact applied to the universal character $\o_\la$ shows the 
equality of the coefficients for $n\geq t l(\la)$.
We have not found a simple explanation at the level of universal characters 
for why the expressions given above for the coefficients 
$a_{\la,\nu}^{\o}(t)$ and $a_{\la,\nu}^{\so^+}(t)$ coincide.

As we remarked in Subsection~\ref{Sec_subClass}, Lecouvey has given algorithms
for computing the action of $\varphi_t$ on classical group characters.
For the odd orthogonal group $\mathrm{SO}_{2n+1}$ this algorithm is
crucial in stating his SXP-type rules. 
In view of Theorem~\ref{Thm_so}, we may restate this algorithm entirely in 
terms of the classical Littlewood decomposition.
What follows is a reinterpretation of the algorithm given in 
\cite[\S4]{Lecouvey09A}.

\begin{const}\label{Const_SO}
Let $n,t\in\mathbb{N}$ be such that $n=at+b$ for $0\leq b\leq t-1$.
Further let $\la$ be a partition of length at most $n$ with 
$\kappa_t(\la)=(c_0,\dots,c_{t-1})$ and 
quotient $(\lar{0},\dots,\lar{t-1})$.
Reading indices modulo $t$ we define for 
$0\leq r\leq\lfloor\frac{t-2}{2}\rfloor$ sequences
\[
\gamma^{(r)}:=[\lar{-r-b-1},\lar{r-b}]_{2a+d_r}
+(c_{-r-b-1}^{2a+d_r}),
\]
where we additionally set
\[
d_r:=\begin{cases}
1 & \text{if $0\leq r\leq b-1$}, \\
2 & \text{if $0\leq t-r-1\leq b-1$ and $0\leq r\leq b-1$},\\
0 & \text{otherwise}.
\end{cases}
\]
Also, if $t$ is odd, $\gamma^{((t-1)/2)}:=\lar{(t-1)/2-b}$ where
$l(\gamma^{((t-1)/2)})\leq a+d_{(t-1)/2}$ and
\[
d_{(t-1)/2}:=\begin{cases} 1 & \text{if $b>(t-1)/2$}, \\
0 & \text{if $b\leq (t-1)/2$}.\end{cases}
\]
Given the above, write
\[
\gamma_n(\la;t)
:=(\gamma^{(0)},\dots,\gamma^{(\lfloor(t-1)/2\rfloor)}).
\]
\end{const}

The output of this construction is a dominant weight for
\[
G_n(\la;t):=
\mathrm{GL}_{2a+d_0}\times\cdots\times
\mathrm{GL}_{2a+d_{\lfloor(t-2)/2\rfloor}}
\times\begin{cases} \mathrm{SO}_{2(a+d_{(t-1)/2})+1}
& \text{if $t$ odd}, \\
1 & \text{if $t$ even},
\end{cases}
\]
a Levi subgroup of $\mathrm{SO}_{2n+1}$.
Let $\mathfrak{g}_n(\la;t)$ denote the corresponding Lie algebra.
We write $V^{\mathfrak{so}_{2n+1}}(\la)$ for the irreducible finite-dimensional
representation of $\mathrm{SO}_{2n+1}$ of highest weight $\la$,
and similarly for $V^{\mathfrak{g}_n(\mu;t)}(\gamma(\mu;t))$.
The branching coefficient 
$[V^{\mathfrak{so}_{2n+1}}(\la):V^{\mathfrak{g}_n(\mu;t)}(\gamma_n(\mu;t))]$
then gives the multiplicity of $V^{\mathfrak{g}_n(\mu;t)}(\gamma_n(\mu;t))$
when $V^{\mathfrak{so}_{2n+1}}(\la)$ is restricted to $G_n(\mu;t)$.
Note that if $b=0$, so that $n$ is a multiple of $t$, then this construction
will output the partitions in the quotient paired as in
Theorem~\ref{Thm_so}.

For an example, take $(n,t)=(8,5)$ so that $(a,b)=(1,3)$.
Then for the partition 
$\la=(15,14,10,7,4,3,2,1)$ we have $\kappa_5(\la)=(0,-1,1,0,0)$ and
\[
\big(\lar{0},\lar{1},\lar{2},\lar{3},\lar{4}\big)
=\big(\varnothing,\varnothing,(2,2,1),(1),(3,1)\big).
\]
Construction~\ref{Const_SO} will output
\[
\gamma_8(\la;5)=\big((0,-1,-1), (0,0,-1),(3,1)\big),
\]
and $G_8(\la;5)=\mathrm{GL}_3\times \mathrm{GL}_3\times\mathrm{SO}_5$.

Let $\mathscr{C}_{b;t}$ denote the set of sequences 
$(c_0,\dots,c_{t-1})\in \mathbb{Z}^t$ such that
$c_{r-b}+c_{t-r-1-b}=0$, where indices are read modulo $t$.
When viewed as encoding $t$-cores, this corresponds to the set of $t$-cores 
$\mu$ which, after shifting the indices of $\kappa_t(\mu)$ cyclicly $b$ places
to the right, are self-conjugate.
We are now ready to state the SXP rule for the odd orthogonal characters.

\begin{theorem}[{\cite[Theorem~4.1.1]{Lecouvey09A}}]\label{Thm_LecFrob}
Let $t\geq 2$ and $n$ be integers such that $n=at+b$ where
$0\leq b\leq t-1$. Then for any partition $\la$ with $l(\la)\leq n$,
\begin{multline}\label{Eq_LecFrob}
\pi_n(\so_\la^+)\circ p_t \\
=\sum_{\substack{\mu\\l(\mu)\leq n\\\kappa_t(\mu)\in\mathscr{C}_{b;t}}}
(-1)^{(\abs{\tilde{\mu}}-\rk(\tilde{\mu}))/2}\sgn_t(\mu/\tilde{\mu})
[V^{\mathfrak{so}_{2n+1}}(\la):V^{\mathfrak{g}_n(\mu;t)}(\gamma_n(\mu;t))]
\pi_n(\so_\mu^+).
\end{multline}
\end{theorem}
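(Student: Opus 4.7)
The plan is to derive Theorem~\ref{Thm_LecFrob} from the universal character factorisation in Theorem~\ref{Thm_so} by restricting to $n$ variables via $\pi_n$ and identifying the resulting products as characters of a Levi subgroup of $\mathrm{SO}_{2n+1}$. Since plethysm by $p_t$ commutes with the restriction $\pi_n$ (both are substitutions of variables), I would begin by writing $\pi_n(\so_\la^+)\circ p_t$ as the universal expression $\so_\la^+\circ p_t$ evaluated at $(x_1,1/x_1,\dots,x_n,1/x_n)$, and then express the coefficient of $\pi_n(\so_\mu^+)$ by Hall-adjointness of plethysm by $p_t$ and the Verschiebung operator $\varphi_t$, reducing the question to computing $\langle \so_\la^+,\varphi_t\so_\mu^+\rangle$ in an appropriate sense after restriction.

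By Theorem~\ref{Thm_so}, $\varphi_t\so_\mu^+$ vanishes unless $\tcore(\mu)$ is self-conjugate, in which case it factors as a signed product of the rational universal characters $\rs_{\mur{r},\mur{t-r-1}}$, together with a central factor $\so^+_{\mur{(t-1)/2}}$ when $t$ is odd. Taking the image of this product under $\pi_n$, one obtains the character of an irreducible representation of a product of general linear and odd orthogonal groups. The restriction rules recalled in Theorems~\ref{Thm_GLGLGL},~\ref{Thm_SOGLSO} and~\ref{Thm_SOGL} then allow me to recognise such products as the branching of $\pi_n(\so_\la^+)$ to an appropriate Levi subgroup of $\mathrm{SO}_{2n+1}$, so that the coefficient becomes a branching multiplicity.

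The precise shape of this Levi depends on the residue $b$ of $n$ modulo $t$: the $b$ extra variables in $n=at+b$ force a cyclic shift in the pairing of runners. Instead of pairing $\lar{r}$ with $\lar{t-r-1}$ as in Theorem~\ref{Thm_so}, the relevant pairing for the restriction becomes $\lar{-r-b-1}\leftrightarrow\lar{r-b}$ (indices mod $t$), which is exactly why the vanishing condition shifts from $\tcore(\mu)\in\mathscr{P}_0$ to $\kappa_t(\mu)\in\mathscr{C}_{b;t}$. Using the extension $s_{(\la_1+1,\dots,\la_n+1)}(x_1,\dots,x_n)=(x_1\cdots x_n)s_{(\la_1,\dots,\la_n)}(x_1,\dots,x_n)$ to absorb the column shifts in the determinantal form of $\rs_{\la,\mu}$, the resulting dominant weights turn out to be precisely $\gamma_n(\mu;t)$ with ranks $2a+d_r$ (and $a+d_{(t-1)/2}$ in the odd case), matching the auxiliary quantities $d_r$ in Construction~\ref{Const_SO}.

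The principal obstacle is this reindexing step: verifying that the cyclic shift by $b$ and the auxiliary constants $d_r$ emerge naturally from the interplay between the pairing structure of Theorem~\ref{Thm_so} and the finiteness constraints imposed by $\pi_n$. Concretely, when a runner of the quotient is too long to fit in its allotted $\mathrm{GL}$ factor, one must apply the $\mathrm{SO}_{2n+1}$ modification rules of \cite{King71,KT87}, and these modifications must combine coherently to reproduce the branching coefficient. The sign $(-1)^{(\abs{\tilde\mu}-\rk(\tilde\mu))/2}\sgn_t(\mu/\tilde\mu)$ is inherited directly from Theorem~\ref{Thm_so}, but ensuring its preservation under the Levi restrictions, and checking that no extra signs arise from reorderings of runners or from the column-shift conventions for rational Schur functions, will rely on the explicit combinatorial description of the sign from Lemma~\ref{Lem_Sgn}.
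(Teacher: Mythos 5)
The paper does not actually prove Theorem~\ref{Thm_LecFrob}: it is cited directly from Lecouvey's work \cite[Theorem~4.1.1]{Lecouvey09A}, and the surrounding text only reinterprets Lecouvey's algorithm (as Construction~\ref{Const_SO}) in terms of the Littlewood decomposition. So any proof you give is independent of the paper.

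That said, there is a genuine gap in your proposal, and it is one the author explicitly flags at the end of Section~\ref{Sec_SXP}. Your second step proposes to ``express the coefficient of $\pi_n(\so_\mu^+)$ by Hall-adjointness of plethysm by $p_t$ and the Verschiebung operator $\varphi_t$, reducing the question to computing $\langle \so_\la^+,\varphi_t\so_\mu^+\rangle$.'' This does not work: the universal characters $\so_\mu^+$ are \emph{not} orthonormal under the Hall inner product, so expanding $\so_\la^+\circ p_t$ in the basis $\{\so_\mu^+\}$ by taking Hall inner products against $\varphi_t\so_\mu^+$ gives the wrong coefficients. The paper says precisely this: ``it does not appear that [the] adjoint relation between $\varphi_t$ and the plethysm by $p_t$ may be employed to give short proofs of the SXP rules based on the factorisations of Theorems~\ref{Thm_so}, \ref{Thm_o} and~\ref{Thm_sp}. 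This is because there is no orthonormality for the universal characters under the Hall inner product.'' What Lecouvey actually does is work in the finite-rank character ring with the standard inner product under which the Weyl characters $\pi_n(\so_\mu^+)$, $l(\mu)\leq n$, \emph{are} orthonormal; the adjoint of plethysm by $p_t$ there is not the universal $\varphi_t$ from \eqref{Eq_premik} but a deformation defined relative to that inner product.

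Beyond this conceptual slip, the structural part of your plan --- restricting Theorem~\ref{Thm_so} via $\pi_n$, tracking the cyclic shift in runner-pairing by $b$ and deducing the modified vanishing condition $\kappa_t(\mu)\in\mathscr{C}_{b;t}$, matching ranks $2a+d_r$ from Construction~\ref{Const_SO} --- is a sensible description of what must happen and is consistent with the paper's commentary. But this part is genuinely nontrivial: once $l(\la) > n$ or the runner partitions exceed the allotted $\mathrm{GL}$ rank, the modification rules of \cite{King71,KT87} intervene, and one must show both that the signs they produce cancel correctly and that the branching coefficient really is the one in \eqref{Eq_LecFrob}; you note this obstacle but do not resolve it. A repair would replace the Hall-adjointness step by Lecouvey's adjointness in the finite character ring, at which point the rest of your sketch has a fighting chance, but as written the central adjunction step fails.
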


There are also versions of this result for $\mathrm{Sp}_{2n}$ and 
$\mathrm{O}_{2n}$ in the case $t$ is even, but they are not stated in
\cite{Lecouvey09A}.
For $t$ odd there cannot be rules of this form since the coefficients describing
the action of the Verschiebung operator on the characters are not branching
coefficients. This is further clarified by the appearance of the ``negative''
odd orthogonal characters in Theorems~\ref{Thm_so} and \ref{Thm_o}.
However, Theorem~\ref{Thm_LecFrob} is all that is needed to
state the universal character lifts of these rules.

As remarked in \cite[p.~769]{Lecouvey09A}, it is possible to give explicit
expressions for the branching coefficients occurring in \eqref{Eq_LecFrob} in 
terms of (multi-)Littlewood--Richardson coefficients. 
These are particularly simple for $n\geq tl(\la)$, since for these values of 
$n$ the coefficients stabilise. 

\begin{lemma}\label{Lem_coefs}
Assume that $n\geq tl(\la)$ and $\tcore(\la)=\varnothing$. If $t$ is even, then
\[
[V^{\mathfrak{so}_{2n+1}}(\la):V^{\mathfrak{g}_n(\mu;t)}(\gamma_n(\mu;t))]
=\sum_{\eta^1,\dots,\eta^{t/2}}
c_{\eta^1,\dots,\eta^{t/2},\mur{0},\dots,\mur{t-1}}^\la.
\]
If $t$ is odd, then\footnote{It is correct that $\eta^{(t+1)/2}$ occurs 
twice in the lower-index of the multi-Littlewood--Richardson coefficient.}
\[
[V^{\mathfrak{so}_{2n+1}}(\la):V^{\mathfrak{g}_n(\mu;t)}(\gamma_n(\mu;t))]
=\sum_{\eta^1,\dots,\eta^{(t+1)/2}}
c_{\eta^1,\dots,\eta^{(t+1)/2},\eta^{(t+1)/2},\mur{0},\dots,\mur{t-1}}^\la.
\]
Else, if $\tcore(\la)\neq \varnothing$ then 
\[
[V^{\mathfrak{so}_{2n+1}}(\la):V^{\mathfrak{g}_n(\mu;t)}(\gamma_n(\mu;t))]=0.
\]
\end{lemma}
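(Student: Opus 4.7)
The plan is to compute the branching coefficient $[V^{\mathfrak{so}_{2n+1}}(\la):V^{\mathfrak{g}_n(\mu;t)}(\gamma_n(\mu;t))]$ directly via iterated applications of the restriction theorems from the previous subsection. The stability hypothesis $n\geq tl(\la)$ guarantees throughout that every restriction is given by the unmodified universal character formulas, so that no modification rules need to intervene at any stage.

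For $t$ even, the Levi $G_n(\mu;t)=\prod_{r=0}^{t/2-1}\mathrm{GL}_{2a+d_r}$ contains no odd orthogonal factor. First apply Theorem~\ref{Thm_SOGL} to restrict $V^{\mathfrak{so}_{2n+1}}(\la)$ to $\mathrm{GL}_n$; the inner Littlewood--Richardson coefficient $c^\la_{\alpha,\beta,\eta}$ of that theorem supplies one auxiliary partition, which will play the role of $\eta^{t/2}$. Then iterate Theorem~\ref{Thm_GLGLGL} along the chain $\mathrm{GL}_n\supset\mathrm{GL}_{n-(2a+d_{t/2-1})}\times\mathrm{GL}_{2a+d_{t/2-1}}\supset\cdots\supset\prod_r\mathrm{GL}_{2a+d_r}$; each of the $t/2-1$ applications contributes one further auxiliary partition via its inner coefficient $c^\alpha_{\nu,\rho,\eta}c^\beta_{\xi,\tau,\eta}$. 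The coefficient of $V^{\mathfrak{g}_n(\mu;t)}(\gamma_n(\mu;t))$ in the final expansion is a nested product of LR coefficients which, by the associativity of multi-Littlewood--Richardson coefficients, collapses to the single coefficient $\sum_{\eta^1,\ldots,\eta^{t/2}}c^\la_{\eta^1,\ldots,\eta^{t/2},\mur{0},\ldots,\mur{t-1}}$ claimed in the statement.

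For $t$ odd, the Levi contains an additional factor $\mathrm{SO}_{2(a+d_{(t-1)/2})+1}$. I begin by applying Theorem~\ref{Thm_SOGLSO} with $k=n-(a+d_{(t-1)/2})$ to peel off this factor; the doubled appearance of $\eta$ in the inner coefficient $c^\la_{\mu,\nu,\xi,\eta,\eta}$ of that theorem is precisely the mechanism producing the double index $\eta^{(t+1)/2}$ in the final formula. I then iterate Theorem~\ref{Thm_GLGLGL} to decompose the remaining $\mathrm{GL}_k$ into $\prod_{r=0}^{(t-3)/2}\mathrm{GL}_{2a+d_r}$, contributing the further auxiliary partitions $\eta^1,\ldots,\eta^{(t-1)/2}$. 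In both parity cases, the essential identification is that the weight $\gamma^{(r)}=[\mur{-r-b-1},\mur{r-b}]_{2a+d_r}+(c_{-r-b-1}^{2a+d_r})$ corresponds to the rational universal character $\tilde{\pi}_{2a+d_r}(\rs_{\mur{-r-b-1},\mur{r-b}})$ tensored with the appropriate power of the determinant representation; the cyclic shift by $-b$ in the quotient indices reflects the numbering convention for $t$-quotient elements relative to the Levi structure, and precisely the pairings $(\mur{-r-b-1},\mur{r-b})$ coming from the positive and negative halves of the $t$-Maya diagram.

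The main obstacle is the combinatorial bookkeeping: one must verify that the successive restrictions, executed in an appropriate order, yield precisely the claimed indexing pattern for the multi-Littlewood--Richardson coefficient, and that the result is independent of the particular order in which one carries out the iterated decomposition of $\mathrm{GL}_k$. For the vanishing claim when $\tcore(\la)\neq\varnothing$, the argument is that the stable-range branching coefficient is given by the same multi-LR expression, but the weight $\gamma_n(\mu;t)$ is constructed entirely from the $t$-quotient of $\mu$ and the $t$-core code $\mathbf{c}=\kappa_t(\tcore(\mu))$; a nonzero multi-LR coefficient $c^\la_{\eta^1,\ldots,\mur{0},\ldots,\mur{t-1}}$ forces $|\la|=\sum_i|\eta^i|+\sum_r|\mur{r}|$, and combined with the constraint that the Levi weight exhausts exactly the $t$-quotient content, this is incompatible with a nonempty $\tcore(\la)$. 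Hence every term in the sum vanishes in that case.
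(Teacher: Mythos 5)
Your treatment of the positive case (when the relevant $t$-core is empty) follows essentially the same route as the paper: one application of Theorem~\ref{Thm_SOGL} (for $t$ even) or Theorem~\ref{Thm_SOGLSO} (for $t$ odd) followed by iterated applications of Theorem~\ref{Thm_GLGLGL}, with the stability hypothesis $n\geq tl(\la)$ keeping everything positive and modification-free. Your count of auxiliary partitions ($t/2$ in the even case, $(t+1)/2$ in the odd case with one doubled) and the identification of the doubled index with the repeated $\eta$ of Theorem~\ref{Thm_SOGLSO} are both correct. Note that your choice $k=n-(a+d_{(t-1)/2})$ in Theorem~\ref{Thm_SOGLSO} is the right one given how that theorem is stated (the GL factor has rank $k$, the SO factor rank $n-k$); the paper writes $k=a+d_{(t-1)/2}$, which appears to be a slip.

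The vanishing argument, however, contains a genuine gap. You assert that ``the stable-range branching coefficient is given by the same multi-LR expression'' even when the $t$-core is nonempty, and then try to deduce vanishing from the size constraint $|\la|=\sum_i|\eta^i|+\sum_r|\mur{r}|$. This does not work for two reasons. First, the multi-Littlewood--Richardson formula is derived assuming that each $\gamma^{(r)}$ is the \emph{unshifted} $\mathrm{GL}$ weight $[\mur{-r-b-1},\mur{r-b}]_{2a+d_r}$; as soon as $\tcore(\mu)\neq\varnothing$, some $c_{-r-b-1}\neq 0$ and Construction~\ref{Const_SO} adds the nonzero shift $(c_{-r-b-1}^{2a+d_r})$, which is a twist by a power of the determinant and changes the representation being branched onto. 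So the ``same multi-LR expression'' is simply not what the branching coefficient equals in that regime. Second, even if it were, a degree constraint of the form $|\la|=\sum|\eta^i|+\sum|\mur{r}|$ is not incompatible with a nonempty $t$-core of $\mu$: the $\eta^i$ are unconstrained in size, so the equality can be satisfied. The paper's vanishing argument is instead a \emph{length} argument: using $\sum_r l(\mur{r})\leq l(\mu)\leq l(\la)$ (visible from the $t$-Maya diagram) together with the fact that the determinant shift by a nonzero $c_{-r-b-1}$ forces one of the two partitions encoded in $\gamma^{(r)}$ to have length exceeding $l(\la)$ once $n=tl(\la)$, which kills all the relevant Littlewood--Richardson coefficients. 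You should replace the size argument by this length argument.

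One last small point: the lemma as printed conditions on $\tcore(\la)$, but both the paper's proof and the downstream use in Theorem 5.6 (the sum there runs over $\mu$ with $\tcore(\mu)=\varnothing$) make clear the intended condition is on $\tcore(\mu)$. Your proof sketch, like the statement, slips between the two; when repairing the vanishing argument, make sure it is phrased in terms of $\tcore(\mu)$.
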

\begin{proof}
Assume that $n\geq tl(\la)$ and $\tcore(\mu)=\varnothing$.
The output of Construction~\ref{Const_SO} applied to $\mu$ yields a tuple 
of weights $(\gamma^{(0)},\dots,\gamma^{(\lfloor(t-1)/2\rfloor}))$
which are made up of pairs of partitions, with an additional 
single partition if $t$ is odd.
If $t$ is even then we first use the restriction rule of Theorem~\ref{Thm_SOGL},
which is positive since $n\geq tl(\la)\geq 2l(\la)$.
From here we then iterate the rule of Theorem~\ref{Thm_GLGLGL} to branch onto
the group $G_n(\mu;t)$.
In the case $t$ is odd, then we begin with the rule of Theorem~\ref{Thm_SOGLSO},
choosing $k=a+d_{(t-1)/2}$, and then iterate Theorem~\ref{Thm_GLGLGL} 
to land in $G_n(\mu;t)$.
Since we have assumed that $n\geq tl(\la)$, these rules will all contain only
positive terms, expressed as sums of multi-Littlewood--Richardson coefficients
as in the statement.

Now assume that $\tcore(\mu)\neq \varnothing$ and that $n=tl(\la)$.
We have that $\sum_{r=0}^{t-1}l(\mur{r})\leq l(\mu)\leq l(\la)$, which may be
seen from the $t$-Maya diagram.
Since $\mu$ has nonempty $t$-core there exists some $r$ for which 
$c_{t-r-1}\neq 0$ and $l(\mur{r})+l(\mur{t-r-1})\leq l(\la)$. 
This means that the length of at least one of the partitions
which make up $\gamma^{(r)}$, which has been shifted by $c_{t-r-1}$, 
will be greater than the length of $\la$, and so the branching coefficients 
will vanish in this case.
\end{proof}

Note that the above also shows that for any $n$ such that $n\geq tl(\la)$ the
branching coefficients are always the same, since increasing $n$ by one
merely permutes the $\mur{r}$.

Let us denote the stablised version of the above coefficients from
Lemma~\ref{Lem_coefs} by $b_{\la,\mu}(t)$.
We may now state the SXP rules for the universal characters.
\begin{theorem}[{\cite[Theorem~4.5.1]{Lecouvey09A}}]
For $\la$ a partition and $t\geq 2$ and integer we have
\begin{align*}
\so_\la^+\circ p_t
&=\sum_{\substack{\mu\\\tcore(\mu)=\varnothing}}
\sgn_t(\mu)b_{\la,\mu}(t)\so_\mu^+,\\
\o_\la\circ p_t
&=\sum_{\substack{\mu\\\tcore(\mu)=\varnothing}}
\sgn_t(\mu)
b_{\la,\mu}(t)\o_\mu, \\
\intertext{and}
\sp_\la\circ p_t
&=(-1)^{\abs{\la}(t-1)}\sum_{\substack{\mu\\\tcore(\mu)=\varnothing}}
\sgn_t(\mu')b_{\la',\mu'}(t) \sp_\mu.
\end{align*}
where $b_{\la,\mu}(t)$ denotes the branching coefficients of
Lemma~\ref{Lem_coefs}.
\end{theorem}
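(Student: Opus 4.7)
The plan is to lift the classical SXP rule of Theorem~\ref{Thm_LecFrob} to the universal setting via a stability argument in the rank $n$, handling $\so^+$ first and then deducing the $\o$ and $\sp$ cases. Fix $\la$ and let $n$ range over multiples of $t$ with $n\geq tl(\la)$, so $n=at$ and $b=0$. Theorem~\ref{Thm_LecFrob} expresses $\pi_n(\so_\la^+)\circ p_t$ as a signed sum over partitions $\mu$ with $l(\mu)\leq n$ and $\kappa_t(\mu)\in\mathscr{C}_{0;t}$, weighted by the Levi branching multiplicities $[V^{\mathfrak{so}_{2n+1}}(\la):V^{\mathfrak{g}_n(\mu;t)}(\gamma_n(\mu;t))]$. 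By Lemma~\ref{Lem_coefs}, these multiplicities vanish unless $\tcore(\mu)=\varnothing$, in which case they equal the stable coefficient $b_{\la,\mu}(t)$; simultaneously the sign $(-1)^{(\abs{\tilde\mu}-\rk(\tilde\mu))/2}\sgn_t(\mu/\tilde\mu)$ collapses to $\sgn_t(\mu)$.

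The next step is to transfer the identity from $\La_n^{\mathrm{BC}}$ up to $\La$. Since $\pi_n$ is defined by the substitution $x_i\mapsto x_i,\,x_i^{-1}$, it intertwines plethysm by $p_t$: $\pi_n(\so_\la^+\circ p_t)=\pi_n(\so_\la^+)\circ p_t$ whenever $l(\la)\leq n$. Combined with Step~1, this gives
\[
\pi_n\Bigl(\so_\la^+\circ p_t-\sum_{\substack{\mu\\\tcore(\mu)=\varnothing}}\sgn_t(\mu)\,b_{\la,\mu}(t)\,\so_\mu^+\Bigr)=0
\]
for every multiple $n$ of $t$ with $n\geq tl(\la)$. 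Truncating the inner expression to symmetric functions of degree at most any fixed $N$ and using that $\pi_n$ is injective on this finite-dimensional truncation for $n$ sufficiently large, the universal identity for $\so^+$ follows.

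For the even orthogonal case I would use the relation $\pi_n(\so_\la^+)=\pi_n'(\o_\la)$ to rewrite the classical identity as one involving $\pi_n'(\o_\mu)$, then apply the same stability argument (now with $\pi_n'$) to lift the identity to $\La$. The symplectic case follows by applying $\omega$ to the universal orthogonal identity, using $\omega\o_\la=\sp_{\la'}$ and \eqref{Eq_omegapt}; the skew Schur expansion \eqref{Eq_UCskewso} of $\o_\la$ has homogeneous components all sharing the parity of $\abs{\la}$, so the sign $(-1)^{\deg\cdot(t-1)}$ produced by \eqref{Eq_omegapt} collapses to the uniform factor $(-1)^{\abs{\la}(t-1)}$ of the symplectic statement, and the pair $\sgn_t(\mu')b_{\la',\mu'}(t)$ emerges via the same duality.

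The main obstacle is confirming in Step~1 that the branching multiplicities really do stabilise to $b_{\la,\mu}(t)$ and that only empty-core $\mu$ contribute: this requires iterated application of Theorems~\ref{Thm_SOGLSO}, \ref{Thm_SOGL} and \ref{Thm_GLGLGL} with careful monitoring of positivity (so that no modification rules fire for $n\geq tl(\la)$) and of the length bookkeeping that forces $\tcore(\mu)=\varnothing$. Essentially all of this is the content of Lemma~\ref{Lem_coefs}, so the heavy lifting has already been carried out. A self-contained alternative avoiding the classical detour would start from the skew Schur expansion \eqref{Eq_UCskewso}, apply Wildon's SXP rule (Theorem~\ref{Thm_SXP}) termwise, and reassemble via the character interrelation \eqref{Eq_CIR}; the crux there would be a direct combinatorial identification of the resulting triple sum with $\sgn_t(\mu)\,b_{\la,\mu}(t)$, which is exactly the simplification of Lemma~\ref{Lem_cumbersome} asserted by the theorem.
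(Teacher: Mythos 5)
Your main line of argument is essentially the paper's proof, spelled out in more detail. The paper's own proof is telegraphic: for the $\so^+$ identity it simply declares the result ``immediate from the large-$n$ vanishing part of Lemma~\ref{Lem_coefs}'', which is exactly your Steps~1--2 (specialise Theorem~\ref{Thm_LecFrob} to $n$ a large multiple of $t$, invoke the vanishing and stabilisation of the branching multiplicities, note the prefactor $(-1)^{(\abs{\tilde\mu}-\rk\tilde\mu)/2}\sgn_t(\mu/\tilde\mu)$ collapses to $\sgn_t(\mu)$ when $\tilde\mu=\varnothing$, then lift by the intertwining $\pi_n\circ(\,\cdot\circ p_t)=(\pi_n\cdot)\circ p_t$ and stability). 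For the $\o$ identity the paper cites the remark after Lemma~\ref{Lem_cumbersome} that $a^{\o}_{\la,\mu}(t)=a^{\so^+}_{\la,\mu}(t)$, which Lecouvey derives precisely from the fact that $\pi_n'$ commutes with the plethysm by $p_t$ — so your Step~3, using $\pi_n(\so^+_\la)=\pi_n'(\o_\la)$ and re-running stability, is the same device. For $\sp$ the paper appeals to the duality $a^{\sp}_{\la,\mu}(t)=(-1)^{\abs\la(t-1)}a^{\o}_{\la',\mu'}(t)$ from Lemma~\ref{Lem_cumbersome}, proved by $\omega$ and \eqref{Eq_omegapt}, matching your Step~4.

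One correction on your closing remark: the ``self-contained alternative'' you sketch — expanding $\so^+_\la$ by \eqref{Eq_UCskewso}, applying Wildon's SXP rule termwise and reassembling via the character interrelation \eqref{Eq_CIR} — is exactly the content of Lemma~\ref{Lem_cumbersome}, and the paper explicitly states at the end of Section~\ref{Sec_SXP} that no simple proof is known that the resulting triple sum $a^{\so^+}_{\la,\mu}(t)$ equals $\sgn_t(\mu)\,b_{\la,\mu}(t)$. So that route is not currently a viable alternative; the passage through the finite-rank branching identity of Theorem~\ref{Thm_LecFrob} is, as far as the paper knows, essential.
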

\begin{proof}
The first equation is immediate from the large-$n$ vanishing part of 
Lemma~\ref{Lem_coefs}.
As remarked after the proof of Lemma~\ref{Lem_cumbersome} the coefficients in
the expansions of $\so^+_\la\circ p_t$ and $\o_\la\circ p_t$ coincide, which
establishes the second equality.
By the duality part of that same lemma, or by directly applying the 
$\omega$ involution,
\[
a_{\la,\mu}^{\sp}(t)=(-1)^{\abs{\la}(t-1)}a_{\la',\mu'}^{\o}(t)
=(-1)^{\abs{\la}(t-1)}\sgn_t(\mu')b_{\la',\mu'}(t).\qedhere
\]
\end{proof}

As this section shows, SXP rules for symplectic and orthogonal characters 
are intimately connected with the representation theory of their associated
groups. Thus, it is not clear if there exists a general SXP rule
for the symmetric function $\cha_\la(z;q)$ in the same manner.
We have also not found a simple proof of the fact that the stabilised 
coefficients $\sgn_t(\mu)b_{\la,\mu}(t)$ agree with $a_{\la,\mu}^{\so^+}(t)$
as expressed in Lemma~\ref{Lem_cumbersome}.
Finally, it does not appear that adjoint relation between $\varphi_t$ and the
plethysm by $p_t$ may be employed to give short proofs of the SXP rules based
on the factorisations of Theorems~\ref{Thm_so}, \ref{Thm_o} and
\ref{Thm_sp}. 
This is because there is no orthonormality for the universal characters under
the Hall inner product. In contrast, Lecouvey uses deformations of the 
Verschiebung operator with respect to the standard inner product on the 
character rings under which the Weyl characters are orthonormal.

\section{Variations on factorisations}\label{Sec_disc}

To conclude, we explain the connections between the results of this paper and
very closely related results: symmetric functions twisted by roots of unity
and characters of the symmetric group.

\subsection{Symmetric polynomials twisted by roots of unity}
A perspective we have not taken in this paper is that of ``twisting'' a
symmetric polynomial by a primitive $t$-th root of unity.
In fact, this is very closely connected to the original work of Littlewood
and Richardson on this topic; see the papers
\cite{Littlewood35,LR34a,LR34b} or Littlewood's book \cite[\S7.3]{Littlewood40}.
The interested reader should consult the recent paper of 
Ayyer and Kumari \cite{AK25}, which proves new results regarding twisting
both ordinary and universal characters by roots of unity, as well as 
surveying some of the results we will now discuss.

A simple generating function argument shows that the action of the $t$-th 
Verschiebung operator on, for instance, the complete homogeneous symmetric
functions, agrees with the result of replacing
$X_n\mapsto (X_n,\xi X_n,\dots,\xi^{t-1} X_n)$ where
$a X_n:=(ax_1,\dots,ax_n)$ for any $a\in\mathbb{C}$ and evaluating.
Littlewood and Richardson apply this twisting to the bialternant formula for 
the Schur functions and then through a sequence of matrix manipulations 
deduce the vanishing and factorisation.
This is the same approach which is taken in the work of Ayyer and Kumari 
\cite{AK22}.
The advantage of this approach is it allows for slightly more general 
statements, such as the following theorem due to Littlewood and Richardson.

\begin{theorem}[{\cite[Theorem~XI]{LR35}}]\label{Thm_y}
Let $\la$ be a partition of length at most $nt+1$. Then for another variable
$y$ we have that
\[
s_\la(X_n,\xi X_n,\dots, \xi^{t-1} X_n,y)=0
\]
unless $\tcore(\la)=(k)$ for some $0\leq k\leq t-1$, in which case
\[
s_\la(X_n,\xi X_n,\dots, \xi^{t-1} X_n,y)
=\sgn_t(\la/(k))y^{k}s_{\lar{k-1}}(X_n^t,y^t)\prod_{\substack{r=0\\r\neq k-1}}^{t-1} s_{\lar{r}}(X_n^t).
\]
\end{theorem}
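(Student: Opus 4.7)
The plan is to separate the single variable $y$ from the block of $nt$ variables coming in roots-of-unity families, apply the one-variable addition formula for Schur functions, and then reduce the remaining evaluation to Theorem~\ref{Thm_skewSchur}.

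First I would invoke the single-variable expansion
\[
s_\la(X,y) = \sum_{k\geq 0} y^k\, s_{\la/(k)}(X),
\]
applied with $X=(X_n,\xi X_n,\dots,\xi^{t-1}X_n)$. Next, from the generating function $\sum_k h_k z^k = \prod_i (1-x_iz)^{-1}$ one checks that evaluating $h_k$ at $(X,\xi X,\dots,\xi^{t-1}X)$ produces $h_{k/t}(X^t)$ if $t\mid k$ and zero otherwise; since both $f\mapsto f(X,\xi X,\dots,\xi^{t-1}X)$ and $f\mapsto (\varphi_t f)(X^t)$ are ring homomorphisms on $\La$ agreeing on the generators $h_k$, they coincide, and in particular
\[
s_{\la/(k)}(X_n,\xi X_n,\dots,\xi^{t-1}X_n) = (\varphi_t s_{\la/(k)})(X_n^t).
\]

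Applying Theorem~\ref{Thm_skewSchur}, the right-hand side vanishes unless $\la/(k)$ is $t$-tileable, that is, unless $\tcore(\la)=\tcore((k))=(k \Mod{t})$. Hence the whole sum vanishes unless $\tcore(\la)=(k_0)$ for some $0\leq k_0\leq t-1$, in which case only $k\in\{k_0, k_0+t, k_0+2t,\dots\}$ contribute. A short Maya-diagram computation identifies the Littlewood decomposition of $(k_0+jt)$: its $t$-core is $(k_0)$ and the $t$-quotient has a single nonempty entry $(j)$ in position $k_0-1\pmod{t}$. Moreover, going from $(k_0+jt)$ to $(k_0+(j+1)t)$ extends the inner shape by a horizontal strip of $t$ cells in the first row, i.e.\ by a single $t$-ribbon of height $0$, so by \eqref{Eq_sgn} the sign $\sgn_t(\la/(k_0+jt))$ is independent of $j$ and equals $\sgn_t(\la/(k_0))$.

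Combining these ingredients, and writing $k-1$ modulo $t$, one obtains
\[
s_\la(X_n,\xi X_n,\dots,\xi^{t-1}X_n,y)
= y^{k_0}\sgn_t(\la/(k_0)) \bigg(\sum_{j\geq 0} y^{jt} s_{\lar{k_0-1}/(j)}(X_n^t)\bigg)\!
\prod_{\substack{r=0\\ r\neq k_0-1}}^{t-1} s_{\lar{r}}(X_n^t).
\]
Applying the single-variable addition formula in reverse collapses the bracketed sum to $s_{\lar{k_0-1}}(X_n^t,y^t)$, yielding the claimed identity with $k=k_0$. The length hypothesis $l(\la)\leq nt+1$ ensures throughout that the quotient pieces are supported on compatible variable sets via the standard Maya-diagram bookkeeping. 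The main (though mild) obstacle is pinning down the quotient of $(k_0+jt)$ and verifying that $\sgn_t(\la/(k_0+jt))$ is constant in $j$; both reduce to the elementary Maya-diagram manipulations already developed in Section~\ref{Sec_prelims}.
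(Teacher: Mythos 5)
Your proof is correct, and it takes a genuinely different route from the one the paper cites. The paper presents Theorem~\ref{Thm_y} only as a citation of Littlewood and Richardson, and remarks in the surrounding text that their original derivation applies the root-of-unity substitution to the bialternant formula for Schur polynomials and extracts the factorisation through a sequence of matrix manipulations; the paper offers no alternative proof of its own. Your argument instead peels off the extra variable $y$ via the branching rule $s_\la(X,y)=\sum_{k\geq 0} y^k s_{\la/(k)}(X)$, identifies the evaluation map $f\mapsto f(X_n,\xi X_n,\dots,\xi^{t-1}X_n)$ with $f\mapsto(\varphi_t f)(X_n^t)$ by checking both ring homomorphisms agree on the algebra generators $h_k$ (a computation the paper itself alludes to at the start of Section~\ref{Sec_disc}), and then feeds everything through the skew Verschiebung factorisation Theorem~\ref{Thm_skewSchur}. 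The Maya-diagram bookkeeping you invoke is correct: $\tcore((k_0+jt))=(k_0)$, the $t$-quotient of $(k_0+jt)$ is a single $(j)$ on runner $k_0-1\pmod t$, and each step $(k_0+jt)\to(k_0+(j+1)t)$ removes one horizontal $t$-ribbon of height $0$, so by multiplicativity of \eqref{Eq_sgn} (or Lemma~\ref{Lem_Sgn}) the sign $\sgn_t(\la/(k_0+jt))$ equals $\sgn_t(\la/(k_0))$. Since $s_{\lar{k_0-1}/(j)}$ vanishes once $j>\lar{k_0-1}_1$, the $j$-sum is finite and collapses back to $s_{\lar{k_0-1}}(X_n^t,y^t)$. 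The payoff of your approach is that the core, quotient and ribbon sign appear directly, in harmony with the combinatorial machinery of this paper, rather than being reverse-engineered from a determinant; the cited approach, phrased in a fixed number of variables, is better suited when the length bound $l(\la)\leq nt+1$ is violated and modification rules enter.
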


This has itself been generalised in several directions.
For instance, Littlewood also characterises the vanishing and factorisation of
$s_\la(1,\dots,\xi^{m})$ where $m$ is an arbitrary positive integer independent
of $t$ \cite[\S7.2]{Littlewood40} which has proved important in the context of
cyclic sieving; see \cite[Theorem~4.3]{RSW04},
\cite[Lemma~6.2]{Rhoades10} and \cite[Theorem~4.4]{Pfannerer22}.
Recently Kumari extended this by replacing the variable $y$ in 
Theorem~\ref{Thm_y} by a set of variables $y_1,\dots,y_r$,
generalising Littlewood's result \cite{Kumari24}.
She also proves similar results for the characters of the symplectic and 
orthogonal groups for these same twists, however, the evaluations are not
always products, and are quite complicated.
None of these results extend elegantly through the Verschiebung operators.
We have given a version of Theorem~\ref{Thm_y} involving a deformation of
the Verschiebung operator \cite[Proposition~6.2]{Albion23}, but this is, in our
opinion, not particularly natural.
There is also a version of Theorem~\ref{Thm_skewSchur}
for flagged skew Schur functions \cite{Kumar23}.

Outside of the realm of classical symmetric functions and classical 
group characters there has been little interest in the action of the 
operators $\varphi_t$.
To our knowledge the only work in this direction is due to Mizukawa
\cite{Mizukawa02}, who has given expressions for the action of the Verschiebung
operators on the Schur $Q$-functions, as well as SXP-type rules.
These involve variants of the Littlewood decomposition for partitions
with distinct parts (also called \emf{bar partitions}), the concepts of which 
were developed in the papers of Morris \cite{Morris65} and Olsson 
\cite{Olsson87}.
By considering the double of a strict partition which is $1$-asymmetric, an 
idea Humphreys attributes to Macdonald \cite{Humphreys86}, these results may
be phrased in terms of the ordinary Littlewood decomposition.
Using this, one may extend Mizukawa's results to skew Schur $Q$-functions
by use of their definition as a Pfaffian, which plays the same role as the
Jacobi--Trudi formula in the proof of Theorem~\ref{Thm_skewSchur}.

\subsection{Characters of the symmetric group}\label{Sec_chars}
In this paper we have not discussed equivalent statements for the characters 
of the symmetric group, as in Littlewood and Richardson's original 
Theorem~\ref{Thm_LittlewoodMult}.
Here we give the precise connection between these two perspectives.

The following may be found in, for instance, \cite[\S I.7]{Macdonald95}.
Let $R^n$ denote the space of class functions on $\mathfrak{S}_n$.
The \emf{characteristic map} $\ch^n:R^n\longrightarrow\La^n$ is defined by
\[
\ch^n(f)
=\frac{1}{n!}\sum_{w\in\mathfrak{S}_n}f(w)p_{\mathrm{cyc}(w)},
\]
where $\La^n$ denotes the space of homogeneous symmetric functions of degree
$n$ and $\mathrm{cyc}(w)$ is a partition of $n$ encoding the cycle type of
$w$.
Under this map $\ch^n(\chi^\la)=s_\la$.
Now let $R:=\bigoplus_{n\geq 0} R^n$. 
For $f\in R^n$ and $g\in R^m$ defining the induction product
$f\cdot g
:=\Ind_{\mathfrak{S}_n\times \mathfrak{S}_m}^{\mathfrak{S}_{n+m}}(f\otimes g)$
turns $R$ into a graded algebra.
We also have a scalar product on $R$ which for $f=\sum_{n\geq 0} f_n$ and
$g=\sum_{n\geq 0} g_n$ is given by
\begin{equation}\label{Eq_Rprod}
\langle f,g\rangle'
:=\sum_{n\geq0}\langle f_n,g_n\rangle_{\mathfrak{S}_n},
\end{equation}
where $\langle f_n,g_n\rangle_{\mathfrak{S}_n}$ is the ordinary scalar product
of $\mathfrak{S}_n$ characters.
The map $\ch:=\bigoplus_{n\geq0}\ch^n$ is then an isometric isomorphism between
$R$ and $\La$. 
We now define the actions of the $t$-th Verschiebung operator and 
its adjoint on $R$. 
On $\La$ this adjoint is the plethysm by a power sum $p_t$, but in general it
is the \emf{Frobenius operator} or \emf{Adams operation} (the former is not to 
be confused with the Frobenius characteristic, another name given to $\ch$).
As in the case of the characteristic map we first define
for $f\in R^n$ the operator $\varphi_t^n$ by
\begin{equation}\label{Eq_phiR}
\varphi_t^n(f)(\mu)=f(t\mu).
\end{equation}
From this we see that if $f\in R^n$ then $\varphi_t(f)\in R^{n/t}$ if $t\mid n$
and is the zero function otherwise. 
Then $\varphi_t:=\bigoplus_{n\geq0} \varphi_t^n$.
In particular if $1_n$ denotes the trivial representation of $\mathfrak{S}_n$
then $\varphi_t(1_n)=1_{n/t}$ if $t$ divides $n$ and is equal to zero otherwise.
Since $\ch(1_n)=h_n$ it follows that $\ch\varphi_t=\varphi_t\ch$, where on the
left we use \eqref{Eq_phiR} and on the right we use the Verschiebung operator on
$\La$. The same is true of $\ch^{-1}$.
The action of $\psi_t^n$ may now be defined by
\begin{equation}\label{Eq_psitR}
\psi_t^n(\chi^\la)(\mu)=
\begin{cases}
t^{l(\mu)}\chi^\la(\mu/t) & \text{if $t\mid \mu_i$ for all $i\geq 1$},\\
0 & \text{otherwise}.
\end{cases}
\end{equation}
Then also set $\psi_t:=\bigoplus_{n\geq0}\psi_t^n$.
Note similarity between \eqref{Eq_psitR} and the expression
for $\varphi_t p_\la$ from Proposition~\ref{Prop_hep}.
The fact that these operators are adjoint with respect to \eqref{Eq_Rprod}
then follows from the orthonormality of the irreducible characters.
All in all, the point of the above constructions is that the characteristic map,
when applied to the expression of Theorem~\ref{Thm_LittlewoodMult}, yields
the expression for $\varphi_t s_\la$ of Theorem~\ref{Thm_Littlewoodvarphi}.
By applying $\ch^{-1}$ to Theorem~\ref{Thm_skewSchur} we obtain the following
theorem of Farahat.
\begin{theorem}[{\cite{Farahat54}}]\label{Thm_farahat}
Let $\la/\mu$ be a skew shape with $\abs{\la/\mu}=nt$. Then 
$\varphi_t(\chi^{\la/\mu})=0$ unless $\la/\mu$ is $t$-tileable, in which case
\[
\varphi_t(\chi^{\la/\mu})
=\sgn_t(\la/\mu)\prod_{r=0}^{t-1}\chi^{\lar{r}/\mur{r}},
\]
where the product on the right-hand side is the induction product.
\end{theorem}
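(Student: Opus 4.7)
The plan is to transfer Theorem~\ref{Thm_skewSchur} across the characteristic map, inverting the procedure sketched in Subsection~\ref{Sec_chars} for deducing Theorem~\ref{Thm_Littlewoodvarphi} from Theorem~\ref{Thm_LittlewoodMult}. The first step is to identify the image of the skew character under $\ch$. Using that $\ch$ is an isometry sending the induction product to the ordinary product on $\La$, together with the adjoint definition of $\chi^{\la/\mu}$ (namely $\langle\chi^{\la/\mu},\chi^\nu\rangle_{\mathfrak{S}_{n-|\mu|}}=\langle\chi^\la,\chi^\mu\cdot\chi^\nu\rangle_{\mathfrak{S}_n}$) and the parallel skew-Schur identity \eqref{Eq_SchurAdj}, I would check that
\[
\ch(\chi^{\la/\mu})=s_{\la/\mu}.
\]
This is the standard fact and requires nothing beyond chasing the two adjoint definitions through the isometric isomorphism.

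Once this is in hand, I would apply the intertwining relation $\ch\varphi_t=\varphi_t\ch$ recorded just after \eqref{Eq_phiR} (which reduces, via the expansion $s_{\la/\mu}=\sum_\nu\langle s_{\la/\mu},h_\nu\rangle m_\nu$ together with Proposition~\ref{Prop_hep} on the $\La$-side and $\varphi_t^n(f)(\nu)=f(t\nu)$ on the $R$-side, to a one-line check on the power-sum basis). Thus
\[
\ch(\varphi_t\chi^{\la/\mu})=\varphi_t s_{\la/\mu}.
\]
Theorem~\ref{Thm_skewSchur} then gives that the right-hand side vanishes unless $\la/\mu$ is $t$-tileable, in which case it equals $\sgn_t(\la/\mu)\prod_{r=0}^{t-1}s_{\lar{r}/\mur{r}}$.

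Applying $\ch^{-1}$ and using that it carries ordinary multiplication on $\La$ back to the induction product on $R$ yields the claimed formula, and injectivity of $\ch$ gives the vanishing statement. There is no real obstacle: all the content sits in Theorem~\ref{Thm_skewSchur}, and the transfer is formal. The only points requiring care are the identification $\ch(\chi^{\la/\mu})=s_{\la/\mu}$ and the intertwining $\ch\varphi_t=\varphi_t\ch$; both are standard consequences of the fact that $\ch$ is an isometric algebra isomorphism $R\to\La$ sending $1_n\mapsto h_n$, so I would relegate them to brief remarks rather than extended calculations.
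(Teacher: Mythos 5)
Your proposal is correct and matches the paper's approach exactly: the paper establishes $\ch(1_n)=h_n$, the intertwining $\ch\varphi_t=\varphi_t\ch$, and that $\ch$ is an isometric algebra isomorphism converting the induction product to the ordinary product, and then asserts the theorem follows by applying $\ch^{-1}$ to Theorem~\ref{Thm_skewSchur}. You spell out the same formal transfer in slightly more detail, including the identification $\ch(\chi^{\la/\mu})=s_{\la/\mu}$, which is precisely the step the paper leaves implicit.
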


The operators $\varphi_t$ and $\psi_t$ in the context of symmetric group
characters first appeared in the relatively unknown paper of Kerber,
S\"anger and Wagner \cite{KSW81}.
In particular, they state the actions \eqref{Eq_phiR} and \eqref{Eq_psitR}
in Section~4 of that paper, together with the adjoint relation.
These are then used to give a proof of Farahat's generalisation of
Theorem~\ref{Thm_LittlewoodMult}, which describes the action of
the Verschiebung operator on the skew character $\chi^{\la/\mu}$.
This is notably different to Farahat's proof, which uses symmetric functions.
They also prove the character-theoretic analogue of the SXP rule, 
our Theorem~\ref{Thm_SXPLittlewood}, which is equivalent to Littlewood's 
original rule for the plethysm $s_\la\circ p_t$.
Another proof of Farahat's theorem is given in \cite[\S3]{EPW14}.
For a more recent application of these ideas to characters of the symmetric 
group see the paper of Rhoades \cite{Rhoades22}.

There has also been some recent interest in the character values
$\chi^\la_{t\mu}$ from a slightly different perspective.
L\"ubeck and Prasad \cite{LP21} have shown that for $\la$ a partition 
with empty $2$-core the character value $\chi^\la_{2\mu}$ is equal,
up to the sign $\sgn_2(\la)$, to the 
value of an irreducible character of the wreath product 
$\mathbb{Z}_2\wr \mathfrak{S}_n$ (also known as the hyperoctahedral group) 
indexed by $(\lar{0},\lar{1})$ 
evaluated at the conjugacy class $(\mu,\varnothing)$.
(For the necessary background on characters of wreath products see \cite[Chapter~I, Appendix~B]{Macdonald95}.)
Their proof is heavily algebraic, and along the way they prove and apply 
the $t=2$ cases of Theorems~\ref{Thm_LittlewoodMult} and 
\ref{Thm_Littlewoodvarphi}.
They also consider the case where $\xcore{2}(\la)=(1)$, which itself hinges on
the $t=2$ case of 
Theorem~\ref{Thm_y} and its character-theoretic analogue, also 
contained in a theorem of Littlewood \cite[p.~340]{Littlewood51}.
This was generalised by Adin and Roichman \cite{AR22}, who further show that
for $\tcore(\la)=\varnothing$
the value $\sgn_t(\la)\chi^\la_{t\mu}$ may be expressed as the character of 
the wreath product $G\wr \mathfrak{S}_n$ indexed by $(\lar{0},\dots,\lar{t-1})$ 
evaluated at the $t$-tuple $(\mu,\varnothing,\dots,\varnothing)$ where $G$ is 
any finite abelian group of order $t$.
Their proof is of a more combinatorial flavour, using Stembridge's 
Murnaghan--Nakayama rule for wreath products \cite[\S4]{Stembridge89} and ribbon
combinatorics. 
Note that this does not cover the vanishing of the character values
$\chi^\la(t\mu)$ in the case $\tcore(\la)$ is nonempty.
Since Stembridge's rules work more generally for skew shapes, it would be 
interesting to investigate a skew extension of these results, putting 
Farahat's theorem into the picture.
For further remarks on this side of the story we refer to the review of the
paper of L\"ubeck and Prasad by Wildon \cite{Wildon22}, which includes a
proof of Theorem~\ref{Thm_LittlewoodMult} using the SXP rule.

\subsection*{Acknowledgements}
I thank C\'edric Lecouvey for kindly explaining the connections between our
results, his work and the results of Ayyer and Kumari.
Thanks also to Arvind Ayyer for useful discussions.


\begin{thebibliography}{99}
\bibitem{AR22}
R. M. Adin and Y. Roichman,
\textit{On characters of wreath products},
Comb. Theory \textbf{2} (2022), Paper No. 17, 12pp.

\bibitem{Albion23}
S. P. Albion,
\textit{Universal characters twisted by roots of unity},
Algebr. Comb. \textbf{6} (2023), 1653--1676.

\bibitem{AK22}
A. Ayyer and N. Kumari,
\textit{Factorization of classical characters twisted by roots of unity},
J. Algebra \textbf{609} (2022), 437--483.

\bibitem{AK25}
A. Ayyer and N. Kumari,
\textit{Further results for classical and universal characters twisted by
roots of unity},
\href{https://arxiv.org/abs/2501.00275}{arXiv:2501.00275}.

\bibitem{BR01}
J. Baik and E. M. Rains,
\textit{Algebraic aspects of increasing subsequences},
Duke Math. J. \textbf{109} (2001), 1--25.

\bibitem{Brauer47}
R. Brauer,
\textit{On a conjecture of Nakayama},
Trans. Roy. Soc. Canada Sect. III \textbf{41} (1947), 11--19.

\bibitem{BW92}
D. M. Bressoud and S.-Y. Wei,
\textit{Determinental formulae for complete symmetric functions},
J. Combin. Theory Ser. A \textbf{60} (1992), 277--286.

\bibitem{BN19}
O. Brunat and R. Nath,
\textit{Cores and quotients of partitions through the Frobenius symbol},
\href{https://arxiv.org/abs/1911.12098}{arXiv:1911.12098}.

\bibitem{CGR84}
Y. M. Chen, A. M. Garsia and J. Remmel,
\textit{Algorithms for plethysm},
Contemp. Math. \textbf{34} (1984), 109--153.

\bibitem{EPW14}
A. Evseev, R. Paget and M. Wildon,
\textit{Character deflations and a generalization of the Murnaghan--Nakayama
rule},
J. Group Theory \textbf{17} (2014), 1035--1070.

\bibitem{Farahat53}
H. Farahat,
\textit{On $p$-quotients and star diagrams of the symmetric group},
Proc. Cambridge Phil. Soc. \textbf{49} (1953), 157--160.

\bibitem{Farahat54}
H. Farahat,
\textit{On the representations of the symmetric group},
Proc. London Math. Soc. \textbf{4} (1954), 303--316.

\bibitem{Farahat58}
H. Farahat,
\textit{On Schur functions},
Proc. London Math. Soc. \textbf{8} (1958), 621--630.

\bibitem{GKS90}
F. Garvan, D. Kim and D. Stanton,
\textit{Cranks and $t$-cores},
Invent. Math. \textbf{101} (1990), 1--17.

\bibitem{GV89}
I. Gessel and X. G. Viennot,
\textit{Determinants, paths, and plane partitions},
(1989),
\href{https://people.brandeis.edu/~gessel/homepage/papers/pp.pdf}{(link)}.

\bibitem{Goulden85}
I. Goulden,
\textit{Directed graphs and the Jacobi--Trudi identity},
Can. J. Math. \textbf{37} (1985), 1201--1210.

\bibitem{Grinberg22}
D. Grinberg,
\textit{Petrie symmetric functions},
Algebr. Comb. \textbf{5} (2022), 947--1013.

\bibitem{GR14}
D. Grinberg and V. Reiner,
\textit{Hopf algebras in combinatorics},
\href{https://arxiv.org/abs/1409.8356}{arXiv:1409.8356}.

\bibitem{HK11a}
A. Hamel and R. C. King,
\textit{Extended Bressoud--Wei and Koike skew Schur function identities},
J. Combin. Theory Ser. A \textbf{118} (2011), 545--557.

\bibitem{HK11b}
A. Hamel and R. C. King,
\textit{Lattice path proofs of extended Bressoud--Wei and Koike skew Schur 
function identities},
Electron. J. Combin. \textbf{18} (2011), \#P47.

\bibitem{Humphreys86}
J. F. Humphreys,
\textit{Blocks of projective representations of the symmetric groups},
J. London Math. Soc. (2) \textbf{33} (1986), 441--452.

\bibitem{JK81}
G. James and A. Kerber,
\textit{The Representation Theory of the Symmetric Group},
Encyclopedia of Mathematics and its Applications, Vol. 16, 
Addison-Wesley, Reading MA, 1981.

\bibitem{KSW81}
A. Kerber, F. S\"anger and B. Wagner,
\textit{Quotienten und Kerne von Young-Diagrammen, Brettspiele und 
Plethysmen gew\"ohnlicher irreduzibler Darstellungen symmetrischer Gruppen},
Mitt. Math. Sem. Giessen (1981), no. 149, 131--175.

\bibitem{King71}
R. C. King,
\textit{Modification rules and products of irreducible representations of the 
unitary, orthogonal and symplectic groups},
J. Math. Phys. \textbf{12} (1971), 1588--1598.

\bibitem{Koike89}
K. Koike,
\textit{On the decomposition of tensor products of the representations of the
classical groups: By means of the universal characters},
Adv. Math. \textbf{74} (1989), 57--86.

\bibitem{Koike97}
K. Koike,
\textit{Representations of spinor groups and the difference characters of 
$SO(2n)$},
Adv. Math. \textbf{128} (1997), 40--81.

\bibitem{KT87}
K. Koike and I. Terada,
\textit{Young-diagrammatic methods for the representation theory of the 
classical groups of type $B_n$, $C_n$, $D_n$},
J. Algebra \textbf{107} (1987), 466--511.

\bibitem{KT90}
K. Koike and I. Terada,
\textit{Young-diagrammatic methods for the restriction of representations of
complex classical Lie groups to reductive subgroups of maximal rank},
Adv. Math. \textbf{79} (1990), 104--135.

\bibitem{Kumar23}
V. S. Kumar,
\textit{Flagged skew Schur polynomials twisted by roots of unity},
\href{https://arxiv.org/abs/2306.10289}{arXiv:2306.10289}.

\bibitem{Kumari24}
N. Kumari,
\textit{Factorization of classical characters twisted by roots of unity: II},
J. Pure Appl. Algebra \textbf{228}, Paper No. 107714, 45pp.

\bibitem{LLT97}
A. Lascoux, B. Leclerc and J.-Y. Thibon,
\textit{Ribbon tableaux, Hall--Littlewood functions, quantum affine algebras,
and unipotent varieties},
J. Math. Phys. \textbf{38} (1997), 1041--1068.

\bibitem{Lecouvey09B}
C. Lecouvey,
\textit{Parabolic Kazhdan--Lusztig polynomials, plethysm and
generalized Hall--Littlewood functions for classical types},
European J. Combin. \textbf{30} (2009), 157--191.

\bibitem{Lecouvey09A}
C. Lecouvey,
\textit{Stabilized plethysms for the classical Lie groups},
J. Combin. Theory Ser. A \textbf{116} (2009), 757--771.

\bibitem{Littlewood35}
D. E. Littlewood,
\textit{Some properties of $S$-functions},
Proc. London Math. Soc. \textbf{40} (1935), 49--70.

\bibitem{Littlewood36}
D. E. Littlewood,
\textit{Polynomial concomitants and invariant matrices},
J. London Math. Soc. \textbf{11} (1936), 49--55.

\bibitem{Littlewood40}
D. E. Littlewood,
\textit{The Theory of Group Characters and Matrix Representations of
Groups}, Oxford University Press, London, 1940.

\bibitem{Littlewood44}
D. E. Littlewood,
\textit{On invariant theory under restricted groups},
Philos. Trans. Roy. Soc. London Ser. A \textbf{239} (1944), 387--417.

\bibitem{Littlewood51}
D. E. Littlewood,
\textit{Modular representations of symmetric groups},
Proc. Roy. Soc. London Ser. A \textbf{209} (1951), 333--353.

\bibitem{Littlewood58}
D. E. Littlewood,
\textit{Products and plethysms of characters with orthogonal, symplectic
and symmetric groups},
Canad. J. Math. \textbf{10} (1958), 17--32.

\bibitem{LR34a}
D. E. Littlewood and A. R. Richardson,
\textit{Group characters and algebra},
Philos. Trans. Roy. Soc. London Ser. A \textbf{233} (1934), 99--141.

\bibitem{LR34b}
D. E. Littlewood and A. R. Richardson,
\textit{Immanants of some special matrices},
Quart. J. Math. Oxford Ser. \textbf{5} (1934), 269--282.

\bibitem{LR35}
D. E. Littlewood and A. R. Richardson,
\textit{Some special $S$-functions and $q$-series},
Quart. J. Math. Oxford Ser. \textbf{6} (1935), 184--198.

\bibitem{LP21}
F. L\"ubeck and D. Prasad,
\textit{A character relationship between symmetric group and hyperoctahedral
group},
J. Combin. Theory Ser. A \textbf{179} (2021), Paper No. 105368, 20pp.

\bibitem{Macdonald95}
I. G. Macdonald,
\textit{Symmetric Functions and Hall Polynomials},
The Clarendon Press, Oxford University Press, New York, 1995.

\bibitem{Mizukawa02}
H. Mizukawa,
\textit{Factorization of Schur's $Q$-functions and plethysm},
Ann. Comb. \textbf{6} (2002), 87--101.

\bibitem{Mizukawa03}
H. Mizukawa,
\textit{Factorization of Schur functions of various types},
J. Algebra \textbf{269} (2003), 215--226.

\bibitem{Morris65}
A. O. Morris,
\textit{The spin representation theory of the symmetric group},
Canadian J. Math. \textbf{17} (1965), 543--549.

\bibitem{Nakayama40I}
T. Nakayama,
\textit{On some modular properties of irreducible representations of a 
symmetric group, I},
Jpn. J. Math. \textbf{17} (1940), 165--184.

\bibitem{Nakayama40II}
T. Nakayama,
\textit{On some modular properties of irreducible representations of a 
symmetric group II},
Jpn. J. Math. \textbf{17} (1940), 411--423.

\bibitem{NO51}
T. Nakayama and M. Osima,
\textit{Note on blocks of symmetric groups},
Nagoya Math. J. \textbf{2} (1951), 111--117.

\bibitem{Olsson87}
J. B. Olsson,
\textit{Frobenius symbols for partitions and degrees of spin characters},
Math. Scand. \textbf{61} (1987), 223--247.

\bibitem{Osima52}
M. Osima,
\textit{On the irreducible representations of the symmetric group},
Canad. J. Math. \textbf{4} (1952), 381--384.

\bibitem{Pak00}
I. Pak,
\textit{Ribbon tile invariants},
Trans. Amer. Math. Soc. \textbf{352} (2000), 5525--5561.

\bibitem{Pfannerer22}
S. Pfannerer,
\textit{A refinement of the Murnaghan--Nakayama rule by descents for 
border strip tableaux},
Comb. Theory \textbf{2} (2022), Paper No. 16, 12pp.

\bibitem{RSW04}
V. Reiner, D. Stanton and D. White,
\textit{The cyclic sieving phenomenon},
J. Combin. Theory Ser. A \textbf{108} (2004), 17--50.

\bibitem{RS98}
J. B. Remmel and M. Shimozono,
\textit{A simple proof of the Littlewood--Richardson rule and applications},
Discrete Math. \textbf{193} (1998), 257--266.

\bibitem{Rhoades10}
B. Rhoades,
\textit{Cyclic sieving, promotion, and representation theory},
J. Combin. Theory Ser. A \textbf{117} (2010), 39--76.

\bibitem{Rhoades22}
B. Rhoades,
\textit{Plethysm and a character embedding problem of Miller},
C. R. Math. Acad. Sci. Paris \textbf{306} (2022), 1113--1116.

\bibitem{Robinson47}
G. de B. Robinson,
\textit{On a conjecture of Nakayama},
Trans. Roy. Soc. Canada Sect. III \textbf{41} (1947), 20--25.

\bibitem{Robinson48}
G. de B. Robinson,
\textit{On the representations of the symmetric group III},
Amer. J. Math. \textbf{70} (1948), 277--294.

\bibitem{ST94}
T. Scharf and J.-Y. Thibon,
\textit{A Hopf algebra approach to inner plethysm},
Adv. Math. \textbf{104} (1994), 30--58.

\bibitem{Staal50}
R. A. Staal,
\textit{Star diagrams and the symmetric group},
Can. J. Math. \textbf{2} (1950), 79--92.

\bibitem{Stanley99}
R. P. Stanley,
\textit{Enumerative Combinatorics. Vol. 2},
Cambridge Stud. Adv. Math., 62,
Cambridge University Press, Cambridge, 1999.

\bibitem{SW85}
D. Stanton and D. White,
\textit{A Schensted algorithm for rim-hook tableaux},
J. Combin. Theory Ser. A \textbf{40} (1985), 211--247.

\bibitem{Stembridge87}
J. R. Stembridge,
\textit{Rational tableaux and the tensor algebra of $gl_n$},
J. Combin. Theory Ser. A \textbf{46} (1987), 79--120.

\bibitem{Stembridge89}
J. R. Stembridge,
\textit{On the eigenvalues of representations of reflection groups and wreath
products},
Pacific J. Math. \textbf{140} (1989), 353--396.

\bibitem{Weyl39}
H. Weyl,
\textit{The Classical Groups: Their Invariants and Representations},
Princeton University Press, Princeton, N.J., 1939.

\bibitem{Wildon18}
M. Wildon,
\textit{A generalized SXP rule proved by bijections and involutions},
Ann. Comb. \textbf{22} (2018), 885--905.

\bibitem{Wildon22}
M. Wildon,
\textit{Review of the article ``A character relationship between symmetric
group and hyperoctahedral group'' by F. L\"ubeck and D. Prasad},
Mathematical Reviews \textbf{MR4196561} (2021).
\end{thebibliography}
\end{document}